\documentclass[12pt]{amsart}
\usepackage{amscd}      
\usepackage{amssymb}
\usepackage{amsmath, amsthm, graphics}
\usepackage{xypic}      
\LaTeXdiagrams          
\usepackage[all]{xy}
\xyoption{2cell} \UseAllTwocells \xyoption{frame} \CompileMatrices
\allowdisplaybreaks[3]

\addtolength{\textwidth}{+4cm} \addtolength{\textheight}{+2cm}
\hoffset-2cm \voffset-1cm \setlength{\parskip}{5pt}
\setlength{\parskip}{5pt}

\usepackage{latexsym}
\usepackage{epsfig}
\usepackage{amsfonts}
\usepackage{enumerate}
\usepackage{times}

 \usepackage[mathscr]{euscript}
  \usepackage[normalem]{ulem}
  \usepackage{url}

\newtheorem{prop}{Proposition}[section]
\newtheorem{lem}[prop]{Lemma}

\newtheorem{cor}[prop]{Corollary}
\newtheorem{thm}[prop]{Theorem}





\newtheorem{theorem}[subsection]{Theorem}

\theoremstyle{definition}

\theoremstyle{remark}

\theoremstyle{remark}

\numberwithin{equation}{section}

\newcommand{\Y}{\mathcal{Y}}

\newcommand{\A}{\mathbb{A}}
\newcommand{\B}{\mathcal{B}}


\newcommand{\qcoh}[1]{\mathrm{QCoh}(#1)}
\newcommand{\coh}[1]{\mathrm{Coh}(#1)}
\renewcommand{\B}{\mathscr{B}}
\renewcommand{\A}{\mathscr{A}}
\newcommand{\modd}[1]{#1\mathrm{-mod}}
\newcommand{\Mod}[1]{#1\mathrm{-Mod}}
\newcommand{\rep}[1]{#1\mathrm{-rep}}
\newcommand{\Rep}[1]{#1\mathrm{-Rep}}

\def\<{\left\langle}
\def\>{\right\rangle}

\pagestyle{headings}
\begin{document}

\title{Stacks associated to abelian tensor categories}
\author{Yu-Han Liu}
\address{Department of Mathematics, Princeton University, Fine Hall, Washington Road, Princeton NJ 08544}
\email{yuliu@math.princeton.edu}

\author{Hsian-Hua Tseng}
\address{Department of Mathematics, Ohio State University, 100 Math Tower, 231 West 18th Avenue, Columbus, OH 43210}
\email{hhtseng@math.ohio-state.edu}

\date{\today}

\begin{abstract}
For an abelian tensor category a stack is constructed.  As an application we show that our construction can be used to recover a quasi-compact separated scheme from the category of its quasi-coherent sheaves. In another application, we show how the ``dual stack'' of the classifying stack $BG$ of a finite group $G$ can be obtained by altering the tensor product on the category $\rep{G}$ of $G$-representations. Using glueing techniques we show that the dual pair of a $G$-gerbe, in the sense of \cite{TT}, can be constructed by glueing local dual stacks.
\end{abstract}

\maketitle

\section{Introduction}

Let $X$ be a Noetherian scheme over an algebraically closed fields $k$ of characteristic $0$. It is known that the category $\qcoh{X}$ of quasi-coherent sheaves on $X$ uniquely determines $X$; see \cite[page~447]{gabriel}, and \cite[Corollary~4.4]{rouquier} for the version with coherent sheaves. Motivated by this, in this paper we study the (re)construction of geometry from abelian categories. Our approach is motivated by the following consideration. Let $\text{Aff}_k$ be the category of affine $k$-schemes. A scheme $X$ is equivalent to its {\em functor of points} $$\text{Aff}_k\to \text{Sets}, \quad S\mapsto \text{Hom}(S,X).$$ Given a morphism $\phi: S\to X$ of schemes, one can consider the pull-back functor $\phi^*: \qcoh{X}\to \qcoh{S}$. $\phi^*$ is a {\em symmetric monoidal functor}, i.e. $\phi^*$ is compatible with tensor products of sheaves. A version of the Tannakian duality theorem (see \cite{lurie}) states that when $X$ is a quasi-compact separated scheme, $\phi\mapsto \phi^*$ defines an equivalence \[\mathrm{Hom}(S,X)\simeq \mathrm{Hom}_\otimes (\qcoh{X}, \qcoh{S})\] where the right-hand side denotes the category of symmetric monoidal functors which carry flat objects to flat objects and preserve colimits.

The discussion above suggests the following general construction. Let $(A,\otimes)$ be an abelian tensor category. Define a functor $$\uline{(A,\otimes)}: \text{Aff}_k\to \text{Groupoids}, \quad \underline{(A,\otimes)}(S):=\mathrm{Hom}_\otimes(A, \qcoh{S}).$$
\begin{theorem}
The functor $\underline{(A,\otimes)}$ is represented by a stack.
\end{theorem}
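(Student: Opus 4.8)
The plan is to verify the descent axioms directly, reducing them to faithfully flat descent for quasi-coherent sheaves. First I would fix a Grothendieck topology on $\text{Aff}_k$, the natural choice being the faithfully flat (fppf) topology, since the Tannakian picture recalled in the introduction is phrased in terms of flat objects and flat covers. The pseudo-functor $\underline{(A,\otimes)}$ is promoted to a category fibered in groupoids over $\text{Aff}_k$ in the usual way: a morphism $\phi\colon S\to T$ acts by post-composition $\mathrm{Hom}_\otimes(A,\qcoh{T})\to \mathrm{Hom}_\otimes(A,\qcoh{S})$, $F\mapsto \phi^*\circ F$, which is well defined because $\phi^*$ is symmetric monoidal and preserves colimits and flat objects. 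The morphisms in each fiber $\underline{(A,\otimes)}(S)$ are the monoidal natural isomorphisms, so the fibers are genuine groupoids and the value in $\text{Groupoids}$ is literal.

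The engine of the argument is the classical fact that $S\mapsto \qcoh{S}$ is itself a stack for the fppf topology, and in fact a stack of \emph{symmetric monoidal} abelian categories: for a faithfully flat cover $S'\to S$ with \v{C}ech nerve $S^{(\bullet)}$, pullback induces an equivalence
\[ \qcoh{S} \xrightarrow{\ \sim\ } \varprojlim\Big( \qcoh{S'} \rightrightarrows \qcoh{S'\times_S S'} \rightrightarrows \cdots \Big) \]
that is compatible with tensor products, with colimits, and with flatness. I would record this monoidal refinement carefully, since everything downstream depends on the descent equivalence respecting exactly the structure that $\mathrm{Hom}_\otimes$ detects.

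The second, and formal, step is the observation that $\mathrm{Hom}_\otimes(A,-)$, being a $\mathrm{Hom}$ out of the fixed source $A$, carries such descent limits of symmetric monoidal categories to the corresponding $2$-limits of groupoids. Concretely, giving a colimit-preserving, flat-preserving symmetric monoidal functor $A\to \qcoh{S}$ is the same as giving a compatible family of such functors $A\to \qcoh{S^{(n)}}$ together with coherence isomorphisms over the overlaps, which is exactly a descent datum for $\underline{(A,\otimes)}$. This yields both stack axioms simultaneously: the sheaf condition for $\mathrm{Isom}$ reads off the $n=0,1$ levels, and effectivity of descent data reads off the equivalence with the full limit. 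In other words $\underline{(A,\otimes)}(S)\simeq \varprojlim_n \underline{(A,\otimes)}(S^{(n)})$, which is the definition of a stack.

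I expect the main obstacle to be $2$-categorical bookkeeping rather than any deep geometry: one must check that the monoidal natural isomorphisms and the associativity/commutativity constraints assemble correctly into the homotopy limit, and that the two side conditions cutting out $\mathrm{Hom}_\otimes$ inside all functors, namely preservation of colimits and of flat objects, are local on the base and hence descend, which follows from faithfully flat pullback being exact, conservative, and colimit-preserving. The only genuinely mathematical input beyond formal nonsense is the monoidal refinement of faithfully flat descent in the second step; once that is in place, representability of $\underline{(A,\otimes)}$ by a stack is a formal consequence of $\mathrm{Hom}_\otimes(A,-)$ commuting with descent limits.
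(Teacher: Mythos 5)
Your proposal is correct and follows essentially the same route as the paper: the key input in both cases is that $S\mapsto\qcoh{S}$ satisfies faithfully flat descent compatibly with the tensor structure, and the stack property of $\underline{(A,\otimes)}$ is then deduced by showing that $\mathrm{Hom}_\otimes(A,-)$ carries the descent equivalence to the corresponding $2$-limit of groupoids. The paper (Theorem \ref{thm:stack}) carries out explicitly the ``$2$-categorical bookkeeping'' you defer --- gluing the monoidal natural isomorphisms, the structure maps $\sigma_{a,b}$, and the coherence diagrams --- and does so for an arbitrary fibred category in abelian tensor categories $\B\to\mathfrak S$ whose descent functors $\delta$ are equivalences, of which monoidal fppf descent for quasi-coherent sheaves is the special case you use; note also that the paper's notion of tensor functor does not impose preservation of flat objects, though including it does no harm since flatness is local.
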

The above Theorem is a special case of our main Theorem, which is described and proved in Section \ref{sec:stack} as Theorem \ref{thm:stack}.

We illustrate our construction in several examples. Suppose that $(A, \otimes)=(\qcoh{X},\otimes)$ is the category of quasi-coherent sheaves on a quasi-compact separated $k$-scheme $X$ with $\otimes$ being the tensor product of sheaves. In this case we have
\begin{theorem}
There is an isomorphism $$\underline{(\qcoh{X},\otimes)}\simeq X.$$
\end{theorem}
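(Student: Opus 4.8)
The plan is to deduce the isomorphism directly from the version of the Tannakian duality theorem recalled in the introduction, together with the $2$-categorical Yoneda lemma. By definition, the stack $\underline{(\qcoh{X},\otimes)}$ sends an affine test scheme $S$ to the groupoid $\mathrm{Hom}_\otimes(\qcoh{X},\qcoh{S})$ of symmetric monoidal functors (with the properties built into the general construction) and their monoidal natural isomorphisms, while the scheme $X$, regarded as a stack via its functor of points, sends $S$ to the discrete groupoid $\mathrm{Hom}(S,X)$. The Tannakian equivalence
$$\mathrm{Hom}(S,X)\simeq\mathrm{Hom}_\otimes(\qcoh{X},\qcoh{S}),\qquad \phi\mapsto\phi^*,$$
supplies, for each $S$, an equivalence of groupoids; the task is to promote this family of objectwise equivalences to an isomorphism of stacks over $\text{Aff}_k$.

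First I would reconcile the notion of $\mathrm{Hom}_\otimes$ entering the abstract construction $\underline{(A,\otimes)}$ with the class of functors appearing in the Tannakian statement, namely symmetric monoidal functors that carry flat objects to flat objects and preserve colimits. Concretely, I would check that every functor counted by $\underline{(\qcoh{X},\otimes)}(S)$ satisfies exactly these hypotheses, and conversely, so that the two groupoids being compared coincide and the Tannakian equivalence applies verbatim. This also records the discreteness implicit in the statement: Tannakian rigidity forces the only monoidal automorphism of a pullback functor $\phi^*$ to be the identity, so $\mathrm{Hom}_\otimes(\qcoh{X},\qcoh{S})$ is equivalent to the discrete groupoid on the set $\mathrm{Hom}(S,X)$.

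Next I would verify naturality in $S$. Given a morphism $g\colon S'\to S$ in $\text{Aff}_k$, the induced map on $\underline{(\qcoh{X},\otimes)}$ is postcomposition with $g^*\colon\qcoh{S}\to\qcoh{S'}$, whereas on the functor of points of $X$ it is $\phi\mapsto\phi\circ g$. Compatibility of the two sides amounts to the canonical isomorphism $g^*\circ\phi^*\cong(\phi\circ g)^*$ expressing pseudofunctoriality of pullback. I would assemble these comparison isomorphisms into a pseudonatural equivalence of the two pseudofunctors $\text{Aff}_k\to\text{Groupoids}$, and then invoke the $2$-Yoneda lemma to conclude that the representing stack of $\underline{(\qcoh{X},\otimes)}$, furnished by Theorem \ref{thm:stack}, is $X$ itself.

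The main obstacle I anticipate is not the objectwise comparison—that is precisely the content of the cited Tannakian theorem—but the coherence bookkeeping in the naturality step. Because pullback of quasi-coherent sheaves is only a pseudofunctor, the identification $g^*\circ\phi^*\cong(\phi\circ g)^*$ is a nontrivial natural isomorphism rather than an equality, and one must check that these isomorphisms are themselves compatible with composition of morphisms $S''\to S'\to S$ and with the monoidal structures, so that the collection defines a genuine morphism of stacks. A secondary point to pin down is that the stack produced by the general Theorem \ref{thm:stack} agrees with the naive functor-of-points description used here, so that ``isomorphism of stacks'' is the correct target; once the definitions are unwound this is a formal consequence of representability.
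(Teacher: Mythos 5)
Your proposal has a genuine gap, and it sits exactly where you locate the ``reconciliation'' step. The groupoid $\underline{(\qcoh{X},\otimes)}(S)$ is by construction the groupoid of \emph{all} tensor functors $\qcoh{X}\to\qcoh{S}$ (symmetric strong monoidal and colimit-preserving); no flatness condition is imposed. The Tannakian duality theorem you invoke identifies $\mathrm{Hom}(S,X)$ only with the subcategory of tensor functors that in addition carry flat objects to flat objects. So the claim that the two groupoids being compared coincide is not a definitional check: it is the assertion that every colimit-preserving symmetric monoidal functor $\qcoh{X}\to\qcoh{S}$ is symmetric-monoidally isomorphic to one preserving flatness, i.e.\ to a pullback $\phi^*$. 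That assertion is precisely the content of the theorem --- the paper points out in \ref{subsec:compare} that Theorem \ref{thm:reconst2} is a \emph{strengthening} of the cited Tannakian statement in which the flatness hypothesis is removed --- so deducing it from the Tannakian theorem is circular. The naturality and coherence issues you flag as the ``main obstacle'' are, by comparison, routine.

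The paper instead argues directly. For affine $X$ (Theorem \ref{thm:reconst}) one extracts from $F$ the ring map $F(1):\mathrm{End}(\mathcal O_X)\to\mathrm{End}(\mathcal O_S)$, sets $f=\mathrm{Spec}(F(1))$, and proves (Lemma \ref{lem:right}) that any two tensor functors agreeing on $\mathrm{End}(\mathcal O_X)$ are symmetric-monoidally isomorphic, by presenting each quasi-coherent sheaf as a cokernel of a map of free modules and using right exactness. For general quasi-compact separated $X$ (Theorem \ref{thm:reconst2}) one covers $X$ by affines $U_i=X-\mathrm{supp}(a_i)$ with $a_i$ finitely presented, shows via Lemma \ref{lem:point} (every tensor functor to $\mathrm{Vect}_K$ is a point) that the open sets $V_i=S-\mathrm{supp}(F(a_i))$ cover $S$, and uses the localization $\qcoh{X}/M_i\cong\qcoh{U_i}$ to induce functors $\qcoh{U_i}\to\qcoh{V_i}$, hence morphisms $V_i\to U_i$ which glue to $f:S\to X$. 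To salvage your outline you would need an independent proof that flatness preservation is automatic for tensor functors out of $\qcoh{X}$, which in effect amounts to reproducing the paper's argument.
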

This is proved in Section \ref{sec:scheme} as Theorem \ref{thm:reconst} in case $X$ is affine, and Theorem \ref{thm:reconst2} for general quasi-compact separated $X$. We expect that this Theorem also holds true for $X$ being a geometric stack (in the sense of \cite[Definition 3.1]{lurie}).

The stack $\uline{(A,\otimes)}$ we construct is sensitive to the tensor structure $\otimes$. To illustrate this, we consider the example $A=\rep{G}$, the category of finite dimensional representations of a finite group $G$ over $k$. $\rep{G}$ may be interpreted as the category $\coh{BG}$ of coherent sheaves on the classifying stack $BG$. In this point of view tensor product of sheaves gives $\rep{G}$ a tensor structure denoted by $\otimes_G$, and we have
\begin{theorem}[see Theorem \ref{thm:BG}]
There is an isomorphism $$\underline{(\rep{G},\otimes_G)}\simeq BG.$$
\end{theorem}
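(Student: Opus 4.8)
The plan is to identify both sides with the groupoid of $G$-torsors, functorially in the affine test scheme $S$, rather than to invoke the scheme reconstruction theorem (which does not directly apply, since $BG$ is not a scheme). Recall that the functor of points of $BG$ assigns to $S$ the groupoid of $G$-torsors over $S$; since $G$ is finite and $k$ has characteristic $0$, these are ordinary principal $G$-bundles, fppf- (indeed étale-) locally trivial. Under the identification $\rep{G}\simeq\coh{BG}$, it therefore suffices to produce an equivalence of groupoids
\[
\mathrm{Hom}_\otimes(\rep{G},\qcoh{S})\ \simeq\ \{\,G\text{-torsors over }S\,\}
\]
natural in $S$, which then upgrades to the asserted isomorphism of stacks.

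First I would construct the map from torsors to monoidal functors. Given a $G$-torsor $p\colon P\to S$, the associated-bundle construction $V\mapsto P\times^G V$ sends a representation to a locally free sheaf on $S$; it is symmetric monoidal for $\otimes_G$ because forming associated bundles commutes with tensor products and carries the trivial representation to $\sO_S$. Since every object of $\rep{G}$ is locally free over $k$, this functor is exact, preserves colimits, and carries flat objects to flat sheaves, so it defines an object of $\mathrm{Hom}_\otimes(\rep{G},\qcoh{S})$; isomorphisms of torsors induce monoidal natural isomorphisms, giving the desired functor of groupoids.

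The substance lies in the inverse direction, the Tannakian reconstruction of the torsor. Here I would use the coordinate ring $\sO(G)$ of $G$ (the regular representation viewed, via pointwise multiplication, as a commutative Hopf algebra object in $(\rep{G},\otimes_G)$ carrying a $G$-action). Given a symmetric monoidal, colimit- and flat-preserving functor $F\colon\rep{G}\to\qcoh{S}$, the image $F(\sO(G))$ is a sheaf of commutative $\sO_S$-algebras, and I set $P:=\mathrm{Spec}_S F(\sO(G))$, with $G$ acting through its action on $\sO(G)$. I expect the main obstacle to be verifying that $P\to S$ is a torsor: one must show that the comultiplication induces an isomorphism $F(\sO(G))\otimes_{\sO_S}F(\sO(G))\xrightarrow{\ \sim\ }\prod_{g\in G}F(\sO(G))$, equivalently that $P\to S$ is finite flat of rank $|G|$ with the $G$-action free and transitive on geometric fibers. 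The point is that a symmetric monoidal functor out of $\rep{G}$ is automatically exact and faithful enough to force $F(\sO(G))$ to be locally free of full rank $|G|$. Semisimplicity in characteristic $0$ is what makes this tractable: every object is a summand of a sum of copies of $\sO(G)$, so $F$ is controlled by its value on $\sO(G)$, and the idempotent decomposition of $\sO(G)$ into isotypic pieces—preserved by the monoidal $F$—pins down the rank and exhibits $P$ as fppf-locally trivial.

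Finally I would check that the two constructions are mutually quasi-inverse. Starting from a torsor $P$, the associated algebra $P\times^G\sO(G)$ is precisely $p_*\sO_P$, whose relative spectrum over $S$ recovers $P$. Starting from $F$, one recovers it from the Hopf algebra $F(\sO(G))$ together with its comodule structure, since $\rep{G}$ is the category of $\sO(G)$-comodules and $F$ transports this data to descent data along $P\to S$; monoidality guarantees the reconstructed functor agrees with $F$ on each isotypic summand, hence on all of $\rep{G}$. Naturality in $S$ is immediate from compatibility of pullback with the associated-bundle construction and with $\mathrm{Spec}_S$. Combining the two directions yields the equivalence of groupoids natural in $S$, and hence the isomorphism of stacks $\underline{(\rep{G},\otimes_G)}\simeq BG$.
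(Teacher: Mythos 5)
Your proposal is correct in outline but takes a genuinely different route from the paper. The paper's proof of Theorem \ref{thm:BG} builds the same forward functor you do --- $\beta\colon BG\to\uline{A_G}$ is exactly the associated-bundle construction, written as a \v{C}ech $1$-cocycle $\{g_{ij}\}$ with values in $G$ pushed forward along each $\rho$ --- but instead of exhibiting an explicit quasi-inverse it verifies that $\beta$ is faithful, full, and essentially surjective. Faithfulness is checked by evaluating on a faithful representation; fullness is reduced, using the fact that both sides are stacks, to the trivial torsor over a connected affine base, where Deligne's Tannakian duality over a field (\cite[1.12]{deligne_tannakian}) identifies the automorphisms of $\beta(\mathcal M)$ with $G$; essential surjectivity invokes \cite[2.7]{deligne_tannakian} to see that each $F(\rho)$ is a vector bundle of rank $\dim V_\rho$, trivializes all of them on a common covering, and observes that the resulting transition data form a $G$-valued $1$-cocycle, i.e.\ a torsor. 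Your Hopf-algebraic reconstruction $P=\mathrm{Spec}_S F(\sO(G))$ is the more classical Tannakian argument: it buys an explicit, global quasi-inverse (which the paper never writes down) at the cost of verifying the torsor axioms for $P$ directly, whereas the paper's descent argument only ever needs to understand $F$ locally, where it becomes a fibre functor over a field.

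One step of your argument is not justified as written: the claim that $F(\sO(G))$ is locally free of rank $|G|$ does not follow from ``the idempotent decomposition of $\sO(G)$ into isotypic pieces,'' because the minimal idempotents of the pointwise-multiplication algebra $\sO(G)$ are permuted simply transitively by $G$ and hence are not morphisms in $\rep{G}$, while the isotypic decomposition of $\sO(G)$ as a representation is not a decomposition of algebras. The fact you need is nevertheless true, and is exactly what the paper extracts from \cite[2.7]{deligne_tannakian}: every object of $(\rep{G},\otimes_G)$ is dualizable, a tensor functor preserves duals, so each $F(\rho)$ is finite locally free, and its rank equals $\dim V_\rho$ because tensor functors preserve the categorical trace of the identity (equivalently, because exterior powers are cut out by idempotents in characteristic $0$ and are preserved by $F$). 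With that substitution your proof closes up: local freeness of rank $|G|$, together with the $G$-equivariant isomorphism $\sO(G)\otimes_G\sO(G)\cong\sO(G)\otimes_k\sO(G)^{\mathrm{triv}}$ transported by the monoidal $F$, shows that $P\to S$ is a finite flat surjection on which $G$ acts simply transitively, hence a torsor, and the quasi-inverse checks then go through as you describe.
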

We expect that the above Theorem to hold also for not necessarily finite groups $G$, such as linear algebraic groups.

On the other hand, Schur's Lemma implies that $\rep{G}$ is equivalent (as abelian categories) to the direct sum of $\#\hat G$ copies\footnote{$\hat G$ denotes the group of characters on $G$.} of the category $\text{vect}_k$ of finite dimensional $k$-vector spaces. Componentwise tensor product of vector spaces then gives a different tensor structure on $\rep{G}$, which is denoted by $\otimes_Z$. In this case we have
\begin{theorem}[See Section \ref{sec:char}]
$\underline{(\rep{G},\otimes_Z)}$ is isomorphic to a disjoint union of $\#\hat G$ points.
\end{theorem}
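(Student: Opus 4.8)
The plan is to analyze the abelian tensor category $(\rep{G}, \otimes_Z)$ directly and show that symmetric monoidal functors out of it correspond to the trivial combinatorial data of choosing one of $\#\hat G$ points. By Schur's Lemma, as an abelian category $\rep{G}$ decomposes as a product $\prod_{\chi \in \hat G} \text{vect}_k$ indexed by the irreducible characters, and under $\otimes_Z$ the tensor product is taken componentwise. The key structural observation is that $(\rep{G}, \otimes_Z)$ is equivalent, as an abelian tensor category, to $(\qcoh{Y}, \otimes)$ where $Y = \coprod_{\chi \in \hat G} \mathrm{Spec}\, k$ is the disjoint union of $\#\hat G$ copies of the base point. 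Indeed, quasi-coherent sheaves on a disjoint union of points are exactly tuples of vector spaces, with the componentwise tensor product being the tensor product of sheaves on $Y$. Once this equivalence is established, I would invoke the previously stated reconstruction Theorem (Theorem \ref{thm:reconst2}) for quasi-compact separated schemes to conclude
\[
\underline{(\rep{G}, \otimes_Z)} \simeq \underline{(\qcoh{Y}, \otimes)} \simeq Y = \coprod_{\chi \in \hat G} \mathrm{Spec}\, k,
\]
which is precisely a disjoint union of $\#\hat G$ points.

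The first step is to make the equivalence $(\rep{G}, \otimes_Z) \simeq (\qcoh{Y}, \otimes)$ precise as abelian tensor categories, not merely as abelian categories. The abelian equivalence is Schur's Lemma; the content is that $\otimes_Z$ matches the sheaf tensor product on $Y$ under this identification. This requires checking that the unit object and the associativity/commutativity constraints correspond, which should follow once one identifies $\otimes_Z$ with the componentwise (pointwise) tensor product of the tuples of vector spaces. The second step is to confirm that the functor $\underline{(-,\otimes)}$ is invariant under equivalences of abelian tensor categories, so that it may be transported along the equivalence above; this should be essentially formal from the definition $\underline{(A,\otimes)}(S) = \mathrm{Hom}_\otimes(A, \qcoh{S})$, since an equivalence of source tensor categories induces an equivalence of the groupoids of symmetric monoidal functors into each $\qcoh{S}$, naturally in $S$.

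The main obstacle, and the point deserving the most care, is verifying that the reconstruction Theorem actually applies to $Y$ and delivers $Y$ itself rather than something subtly different. Here $Y$ is a disjoint union of spectra of fields, which is quasi-compact (when $\#\hat G$ is finite, as it is for finite $G$) and separated, so it falls within the hypotheses of Theorem \ref{thm:reconst2}. One should be slightly careful that $\mathrm{Spec}\, k$ with $k$ the algebraically closed base field behaves as a single reduced point and that the disjoint union is taken in the category of $k$-schemes, so that $\underline{(\qcoh{Y},\otimes)}(S)$ reproduces $\mathrm{Hom}(S, Y)$. Since $Y$ has a finite disjoint decomposition, one could alternatively argue that a symmetric monoidal functor must send the unit to the unit and respect the idempotent decomposition of the unit object of $(\rep{G}, \otimes_Z)$ induced by the orthogonal idempotents of the character decomposition; each such functor is then forced to factor through exactly one of the $\#\hat G$ components, giving the point-set bijection directly. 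Either route reduces the computation to the already-proved scheme reconstruction, so I expect no essential difficulty beyond the bookkeeping of the tensor structure and the finiteness of $\hat G$.
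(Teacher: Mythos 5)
Your proposal is correct and takes essentially the same route as the paper: the paper identifies $(\rep{G},\otimes_Z)$ with $\modd{Z(kG)}\cong \coh{\mathrm{Spec}(k^{\#\hat G})}$ equipped with the sheaf (module) tensor product and then applies the affine reconstruction Theorem \ref{thm:reconst}; your $Y=\coprod_{\hat G}\mathrm{Spec}\,k$ is the same affine scheme, so the general Theorem \ref{thm:reconst2} is not needed. The one point to tighten is that $\rep{G}$ consists of \emph{finite-dimensional} representations and therefore corresponds to $\coh{Y}$ rather than $\qcoh{Y}$; this is exactly the situation covered by Remark \ref{par:rmkcoh}, which observes that the reconstruction argument applies verbatim to any abelian tensor subcategory of $\qcoh{X}$ containing the unit object, such as $\coh{X}$ for a noetherian affine scheme $X$.
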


Certainly $BG$ is very different from a disjoint union of $\#\hat G$ points. However they are closely related. In fact the correspondence between them is the simplest example of the {\em gerbe duality} studied in \cite{TT}. For a $G$-gerbe $\Y\to \B$ the gerbe duality asserts that various geometric properties of $\Y$ are equivalent to geometric properties of a ``dual space'' $\widehat{\Y}$ {\em twisted by} a $\mathbb{C}^*$-valued $2$-cocycle $c$. In particular, it is shown in \cite{TT} that the category of sheaves on $\Y$ is equivalent to the category of $c$-twisted sheaves on $\widehat{\Y}$. The $BG$ example suggests that it may be possible to realize the $G$-gerbe $\Y$ and its dual $(\widehat{\Y}, c)$ using stacks associated to the category of sheaves on $\Y$ with different tensor structures. In Section \ref{sec:gerbe} we carry out a construction of this nature. Locally on $\Y$, the stacks we construct realize the $G$-gerbe and its dual. We then obtain $\Y$ and $(\widehat{\Y}, c)$ by glueing local duals.

\subsection*{Outlook}
It is very interesting to study the stacks constructed in this paper for other examples of abelian tensor categories. For instance, it will be very desirable to describe the stacks associated to the category of rational Hodge structures, or the category of representations of a quiver.

One may hope that properties of the stack $\underline{(A,\otimes)}$ can reflect properties of the category $(A,\otimes)$. For this reason it is interesting to study geometric properties of $\underline{(A,\otimes)}$. For example, it will be interesting to find criteria for algebraicity of $\underline{(A,\otimes)}$. We plan to pursue this elsewhere.

The rest of this paper is organized as follows.  In section \ref{sec:ABstack} we define the category $\uline{(A,\otimes)}$ of tensor functors from a fixed abelian tensor category $A$ into a varying family $\B$ of abelian tensor categories, and we show that if the target categories $\B$ form a fibred category or (pre-)stack, then so does $\uline{(A,\otimes)}$.  We also explain how objects in $A$ naturally give rise to sheaves on $\uline{(A,\otimes)}$.

In section \ref{sec:scheme} we explain how schemes can be reconstructed as stacks of the form $\uline{(A,\otimes)}$ when $A$ is taken to be the category of quasi-coherent sheaves on the scheme.  In section \ref{sec:group} we consider the example $A=\rep{G}$ where $G$ is a finite group, and we study $\uline{(A,\otimes)}$ equipped with different tensor structures.

In Section \ref{sec:gerbe} we construct stacks locally of the form $\uline{(A,\otimes)}$ using 2--descent.  As an example we show how the gerbe duality of \cite{TT} can be understood via this construction:  Indeed, a $G$-gerbe $\mathcal Y$ and the underlying space of its dual $\hat{\mathcal Y}$ can both locally be realized as stacks of the form $\uline{(A,\otimes)}$ with the same abelian category $A=\rep{G}$ and ``dual'' choices of tensor products.

In Appendix \ref{sec:twisting} we give a brief review of the construction of stacks using 2--descent.

\section{Stack associated to an abelian tensor category}\label{sec:ABstack}

\subsection{Category fibred in abelian tensor categories}

\subsubsection{} Let $\mathfrak S$ be a category, and let $\pi:\B\rightarrow \mathfrak S$ be a category fibred in abelian tensor categories.  This means first of all that $\pi$ is a functor making $\B$ a fibred category; in particular for every morphism $f: S'\rightarrow S$ in $\mathfrak S$ there is a \emph{canonical} functor between the fibre categories \[f^*: \B_S\rightarrow \B_{S'}.\]

Every fibre category $\B_S$ is required to be an abelian tensor category, which means that it is an abelian category along with a symmetric monoidal category structure $\otimes_S$ \cite[Chapter~XI.1]{maclane_categories}, such that the functor $x\mapsto y\otimes_S x$ is additive and preserves (small) colimits for every $y\in\B_S$.  We denote by $u_S$ the unit object in $\B_S$.

The assumption above in particular implies that the functor $x\mapsto y\otimes_S x$ is right exact, and that the zero object $0\in\B_S$ satisfies $0\otimes_Sx\cong 0$ for every $x\in \B_S$.

Lastly, the pull-back functors $f^*$ are required to be symmetric strong monoidal \cite[Chapter~XI.2]{maclane_categories} and preserves colimits; in particular they preserve direct sums.  The condition of being symmetric strong monoidal means that there are isomorphisms \[f^*(y)\otimes_{S'}f^*(x)\stackrel{\cong}{\longleftarrow} f^*(x)\otimes_{S'} f^*(y)\stackrel{\cong}{\longrightarrow} f^*(x\otimes_S y)\] and \[u_{S'}\stackrel{\cong}{\longrightarrow} f^*(u_S)\]  for every $x,y\in\B_S$ which satisfy some compatibility, or coherence conditions in the form of commutative diagrams; to simplify notations we will often suppress these isomorphisms and canonically identify these objects in $\B_{S'}$.

We will call symmetric strong monoidal functors preserving colimits simply \emph{tensor functors}.  In particular they are right exact functors.

\subsubsection{}\label{par:exqcoh}  \emph{Example.}  Fix a commutative ring $\Lambda$ with identity.  Let $\mathfrak S$ be the category of affine $\Lambda$-schemes.  Let $\B$ be the category of pairs $(S,\mathcal F)$ with $S\in\mathfrak S$ and $\mathcal F$ a quasi-coherent sheaf on $S$.  The obvious forgetful functor $\pi: \B\rightarrow \mathfrak S$ is a category fibred in abelian tensor categories with the sheaf tensor product on each $\B_S=\qcoh{S}$.

\subsubsection{}\label{par:exqrep}  \emph{Example.}  Fix a field $k$.  Let $\mathfrak S$ be the category of finite quivers (without relations).  Let $\B$ be the category of pairs $(Q,V)$ with $Q\in\mathfrak S$ and $V$ a finite dimensional $Q$-representation over $k$.  The obvious forgetful functor $\pi: \B\rightarrow \mathfrak S$ is a category fibred in abelian tensor categories with the vertex-wise tensor product on each $\B_Q=\rep{Q}_k$.

\subsubsection{}\label{par:exgrep}  \emph{Example.}  Fix a field $k$.  Let $\mathfrak S$ be the category of finite groups.  Let $\B$ be the category of pairs $(G,V)$ with $G\in\mathfrak S$ and $V$ a finite dimensional $G$-representation over $k$.  The obvious forgetful functor $\pi: \B\rightarrow \mathfrak S$ is a category fibred in abelian tensor categories with the representation tensor product on each $\B_Q=\rep{G}_k$.

\subsection{Category fibred in groupoids associated to an abelian tensor category}

\subsubsection{}\label{}  Let $(A,\otimes)$ be an abelian tensor category with unit object $u$, and let $\pi: \B\rightarrow \mathfrak S$ be a category fibred in abelian tensor categories as above.

Define a category $\uline{(A,\otimes)}(\B)$, sometimes just $\uline{A}(\B)$, $\uline{(A,\otimes)}$, or $\uline A$ whenever the tensor product and/or the category $\B$ in question are clear, whose objects are pairs $(S,F)$ where $S\in\mathfrak S$ and $F:A\rightarrow \B_S$ is a \emph{tensor functor}.  A morphism $(S',F')\rightarrow (S,F)$ is a pair $(f,\phi)$ where $f: S'\rightarrow S$ is a morphism in $\mathfrak S$ and $\phi$ is a \emph{symmetric monoidal} natural \emph{isomorphism} between tensor functors \[\phi: F'\rightarrow f^*\circ F\] from $A$ to $\B_{S'}$:

\centerline{\xymatrix{&&A \ar[ddll]_-{F'} \ar@<3pt>[dd]^-{F} \\ & \ar@{=>}[dr]^-{\phi}& \\ \B_{S'} & & \B_{S}\ar@<2pt>[ll]^-{f^*}\\ S'\ar[rr]^-f && S.  }}

To say $\phi$ is \emph{monoidal} means that if for every pair $a,b$ of objects in $A$ the following diagram commutes:

\[\xymatrix{ F'(a\otimes b) \ar[r]^-{\phi_{a\otimes b}} \ar[d]_-{\sigma'_{a,b}} & f^*F(a\otimes b) \ar[d]_-{f^*\sigma_{a,b}} \\ F'(a)\otimes_{S'}F'(b) \ar[r]_-{\phi_a\otimes_{S'}\phi_b} & f^*F(a)\otimes_{S'}f^*F(b),  }\]

and so does

\[ \xymatrix{u_{S'}\ar[d]_-{\sigma'_0} \ar@{=}[r] & u_{S'} \ar[d]^-{f^*\sigma_0} \\ F'(u) \ar[r]_-{\phi_u} & f^*F(u).}\]

Here $\sigma_{a,b}$ and $\sigma_0$ are the isomorphisms that come with the monoidal functor $F$; similarly $\sigma'_{a,b}$ and $\sigma'_0$ are for $F'$.  To say $\phi$ is symmetric means that it satisfies some further conditions in the form of commutative diagrams \cite[page~257]{maclane_categories}.

\subsubsection{}\label{}  To define compositions in $\uline A$, suppose we have morphisms $(f,\phi):(S',F')\rightarrow (S,F)$ and $(f',\phi'): (S'',F'')\rightarrow (S',F')$.  We define \[(f,\phi)\circ (f',\phi')=(f\circ f',f'^*(\phi)\circ \phi'):(S'',F'')\rightarrow (S,F) .\]

\centerline{\xymatrix{&& A\ar[d]^-F \ar[dl]^-{F'} \ar[dll]_-{F''} \\ \B_{S''} & \B_{S'}\ar[l]^-{f'^*} & \B_{S}.\ar[l]^-{f^*} \\ S''\ar[r]^-{f'} & S' \ar[r]^-f & S}}

This way $\uline A$ is a category with a covariant functor $p: \uline A\rightarrow \mathfrak S$ sending \[(S,F)\mapsto S\] and \[(f,\phi)\mapsto f.\]  The category $\uline A$ should be thought of as the category of ``representations'' of the abelian tensor category $A$ with values in $\B$.

\begin{lem}  The functor $p: \uline A\rightarrow \mathfrak S$ defined above is a category fibred in groupoids.  \end{lem}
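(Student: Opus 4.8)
The plan is to verify the two defining conditions for a category fibred in groupoids: first, that every morphism $(f,\phi):(S',F')\to(S,F)$ lying over a fixed $f$ in $\mathfrak S$ is cartesian, and second, that any morphism $f:S'\to S$ in $\mathfrak S$ together with an object $(S,F)$ over $S$ admits a cartesian lift with target $(S,F)$. Equivalently, since the base is arbitrary, I would show that $p$ is a fibred category whose fibre categories $\uline{A}_S$ are groupoids. The latter is immediate from the definition: a morphism in the fibre over $S$ (i.e.\ with $f=\mathrm{id}_S$) is a symmetric monoidal natural \emph{isomorphism} $\phi:F'\to F$, and the inverse natural transformation $\phi^{-1}$ is again symmetric monoidal (the coherence diagrams for $\phi^{-1}$ follow by inverting the arrows in the diagrams for $\phi$), so every fibre morphism is invertible.

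The substantive point is the existence and uniqueness of cartesian lifts. Given $f:S'\to S$ and $(S,F)$, I would produce the lift by composing with the pull-back functor: set $F':=f^*\circ F:A\to\B_{S'}$, which is a tensor functor because $f^*$ and $F$ are both tensor functors and the composite of tensor functors is a tensor functor (here I use that $f^*$ is symmetric strong monoidal and colimit-preserving, as assumed in the definition of $\pi$). Then take $\phi:=\mathrm{id}_{f^*\circ F}$, giving a morphism $(f,\mathrm{id}):(S',f^*\circ F)\to(S,F)$. I would check this is cartesian: given any $(f',\phi'):(S'',F'')\to(S,F)$ and any factorisation $f'=f\circ g$ of the underlying base morphisms, I must exhibit a unique $(g,\psi):(S'',F'')\to(S',f^*F)$ with $(f,\mathrm{id})\circ(g,\psi)=(f',\phi')$. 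Unwinding the composition rule $(f,\mathrm{id})\circ(g,\psi)=(f\circ g,\, g^*(\mathrm{id})\circ\psi)=(fg,\psi)$ forces $\psi=\phi'$ once one identifies $g^*f^*F$ with $(fg)^*F=f'^*F$ via the canonical comparison isomorphism of the fibred structure on $\B$; uniqueness is then automatic since $g$ is determined as the unique factor and $\psi$ is determined by $\phi'$.

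I expect the main obstacle to be purely bookkeeping rather than conceptual: one must check that $\psi:=\phi'$, transported along the canonical coherence isomorphism $g^*f^*F\cong f'^*F$ coming from the fibred category $\B$, is genuinely a symmetric monoidal natural isomorphism, i.e.\ that it satisfies the two commuting squares (for $\otimes$ and for the units) in the definition of a monoidal natural transformation. This requires combining the monoidality of $\phi'$ with the strong monoidality of the comparison isomorphisms for $g^*$, $f^*$, and $(fg)^*$, and invoking the coherence axioms that these comparison isomorphisms satisfy as part of $\B\to\mathfrak S$ being a fibred category in abelian tensor categories. This is a diagram-chase using the stated coherence conditions; since the excerpt explicitly allows suppressing and canonically identifying these isomorphisms, I would phrase the verification by working with those identifications and remarking that all the relevant diagrams commute by naturality and coherence. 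With cartesianness and the groupoid property of the fibres established, $p:\uline{A}\to\mathfrak S$ is a category fibred in groupoids.
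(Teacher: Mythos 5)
Your proposal is correct and follows essentially the same route as the paper: both construct the canonical lift $(S',f^*\circ F)$ with the morphism $(f,\mathrm{Id}_{f^*\circ F})$ and verify its universal property, the only difference being that you package the argument as ``fibred category with groupoid fibres'' while the paper directly checks existence of lifts and unique factorisation of any other lift through the canonical one. The coherence bookkeeping you flag is exactly what the paper elects to suppress via its stated convention of canonically identifying $g^*f^*$ with $(fg)^*$.
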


\begin{proof}  Given any morphism $f: S'\rightarrow S$ in $\mathfrak S$ and $(S,F)\in \uline A$, the object $(S',f^*\circ F)$ in $\uline A$ admits a morphism \[(f,\mathrm{Id}_{f^*\circ F}): (S',f^*\circ F)\rightarrow (S,F).\]This is a lifting of $f$.

Now suppose \[(f,\phi_1): (S',F_1)\rightarrow (S,F)\] is another lifting of $f$.  Then we have a commutative diagram

\[\xymatrix{(S',f^*\circ F) \ar[drr]^-{(f,\mathrm{Id}_{f^*\circ F})}\\ && (S,F) \\ (S',F_1) \ar[urr]_-{(f,\phi_1)} \ar@{-->}[uu]^-{(\mathrm{Id}_{S'}, \phi_1)} }\]

where the dashed arrow is the unique morphism making the diagram commutative and has image under $p$ equal to $\mathrm{Id}_{S'}$.  \end{proof}

\subsubsection{}  \emph{Remark.}  We can define a larger category $\uline{A}^+$ with the same objects as $\uline{A}$ but morphisms $(f,\phi)$ where $\phi$ is a symmetric monoidal natural \emph{transformation} between monoidal functors.  Essentially the same proof above shows that $\uline{A}^+\rightarrow \mathfrak S$ is a fibred category:  The morphism $(f,\mathrm{Id}_{f^*\circ F})$ is easily seen to be strongly cartesian and is a lifting of morphism $f:S'\rightarrow S$ with a given target $(S,F)$ in $\uline{A}^+_S$.

\subsubsection{}\label{par:funB}  Suppose we have a morphism between categories fibred in abelian tensor categories:

\[\xymatrix{\B \ar[d] \ar[r]^-q & \B' \ar[d] \\ \mathfrak S \ar[r]^-{q_0} & \mathfrak S'.}\]

Let $A$ be an abelian tensor category, then we have an induced morphism \[q_*:\uline{A}(\B)\rightarrow \uline{A}(\B')\] between categories fibred in groupoids over the functor $q_0$ defined by sending \[q_*:(S,F)\mapsto (q_0(S),q_S\circ F),\] where $q_S: \B_S\rightarrow \B'_{q_0(S)}$ is the restriction of $q$ to the fibre category $\B_S$.

A special case of this is when $\B$ is the fibre product $\mathfrak S\times_{\mathfrak S'}\B'=q_0^{-1}\B'$, then we also have \[\uline{A}(\B)\cong \mathfrak S\times_{\mathfrak S'}\uline{A}(\B')=q_0^{-1}\uline{A}(\B').\]

\subsubsection{}\label{}  Fix a category $\B\rightarrow \mathfrak S$ fibred in abelian tensor categories.  If $g:A\rightarrow A_1$ is a tensor functor between abelian tensor categories, then we have an induced morphism between categories fibred in groupoids \[g^*: \uline{A_1}\longrightarrow \uline {A}.\]

More explicitly, $g^*$ sends $(S,F_1)\mapsto (S, F_1 \circ g)$, and if $(f,\phi):(S',F'_1)\rightarrow (S,F_1)$ is a morphism in $\uline A$ then $g^*$ sends $(f,\phi)\mapsto (f, g^*\phi)$ where $g^*\phi: F'_1\circ g\rightarrow f^*\circ F_1\circ g$ is given by \[(g^*\phi)_a=\phi_{g(a)}: F'_1g(a)\longrightarrow f^*F_1g(a)\] for $a\in A$.

\subsubsection{}\label{}  Let $k$ be a \emph{perfect} field and let $A_1$ and $A_2$ be two abelian \emph{$k$-linear} tensor categories with unit objects $u_1$ and $u_2$, and tensor products $\otimes_1$ and $\otimes_2$ respectively.  Assume moreover that every object in $A_i$, $i=1,2$ is of finite length and every $\mathrm{Hom}$ space has finite dimension over $k$.

Then by \cite[5.13(i)]{deligne_tannakian}, the tensor product $A=A_1\otimes A_2$ exists and is a $k$-linear abelian category satisfying the same finiteness conditions above.

This category admits a functor \[\boxtimes: A_1\times A_2\longrightarrow A=A_1\otimes A_2\] that is exact in each variable and satisfies \[\mathrm{Hom}_A(a_1\boxtimes a_2,b_1\boxtimes b_2)\cong\mathrm{Hom}_{A_1}(a_1,b_1)\otimes_\Lambda \mathrm{Hom}_{A_2}(a_2,b_2)\]for every $a_i, b_i\in A_i$.

By \cite[5.16]{deligne_tannakian} the category $A$ is moreover an abelian \emph{tensor} category with a tensor product $\otimes$ right exact in each variable and satisfying \begin{equation}\label{eq:tensor}(a_1\boxtimes a_2)\otimes (b_1\boxtimes b_2)\cong (a_1\otimes_1b_1)\boxtimes (a_2\otimes_2b_2).\end{equation}

Its unit object is $u=u_1\boxtimes u_2$.

In particular we have tensor functors $q_i:A_i\rightarrow A$ defined by $q_1:a_1\mapsto a_1\boxtimes u_2$ and $q_2:a_2\mapsto u_1\boxtimes a_2$.  Moreover \eqref{eq:tensor} implies \begin{equation}\label{eq:tensor1} a_1\boxtimes a_2\cong q_1(a_1)\otimes q_2(a_2)\end{equation}in $A$.

\begin{lem}\label{lem:prod}  Let $k$ be a perfect field, and let $A_1$ and $A_2$ be abelian $k$-linear tensor categories satisfying the finiteness conditions above, and let $A=A_1\otimes A_2$.  Let $\B\rightarrow \mathfrak S$ be a category fibred in abelian tensor categories.  Then we have an isomorphism of categories fibred in groupoids over $\mathfrak S$: \[Q:\uline{A}\longrightarrow \uline{A_1}\times_\mathfrak S\uline{A_2}.\]  \end{lem}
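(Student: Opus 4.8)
The plan is to exhibit $Q$ as the functor induced by the two tensor functors $q_1,q_2$ of the excerpt, and then to build an explicit quasi-inverse using the universal property of the Deligne tensor product. On objects $Q$ sends $(S,F)$ to the triple $(S,\,F\circ q_1,\,F\circ q_2)$, which is a well-defined object of $\uline{A_1}\times_{\mathfrak S}\uline{A_2}$ because $q_i^*(S,F)=(S,\,F\circ q_i)$ lands in $\uline{A_i}$ over the same base object $S$; on a morphism $(f,\phi)$ it sends it to $(f,\,q_1^*\phi,\,q_2^*\phi)$, using the pull-back $q_i^*\phi$ of a symmetric monoidal natural isomorphism by a tensor functor defined above. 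Since each $q_i^*$ is a morphism of categories fibred in groupoids over $\mathrm{Id}_{\mathfrak S}$, so is $Q$.

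To construct the inverse $P$, given $(S,F_1,F_2)$ I would form the functor $A_1\times A_2\to \B_S$, $(a_1,a_2)\mapsto F_1(a_1)\otimes_S F_2(a_2)$. Because each $F_i$ is a tensor functor it is right exact, and $\otimes_S$ is right exact in each variable, so this functor is right exact in each variable. By the universal property of $A=A_1\otimes A_2$ (\cite[5.13(i)]{deligne_tannakian}, together with \cite[5.16]{deligne_tannakian} for the tensor structure) it factors through $\boxtimes$ to a right-exact functor $F\colon A\to \B_S$ equipped with a canonical isomorphism $F(a_1\boxtimes a_2)\cong F_1(a_1)\otimes_S F_2(a_2)$. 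I would then endow $F$ with a monoidal structure: on objects of the form $a_1\boxtimes a_2$ the required isomorphism $F(x\otimes y)\cong F(x)\otimes_S F(y)$ is produced by combining \eqref{eq:tensor} with the monoidal constraints of $F_1,F_2$ and the symmetry of $\otimes_S$, while the unit isomorphism $u_S\cong F(u)=F(u_1\boxtimes u_2)$ comes from $F_i(u_i)\cong u_S$. These isomorphisms extend uniquely to all of $A$ by right exactness, since every object of $A$ is a cokernel of a map between finite direct sums of objects of the form $a_1\boxtimes a_2$.

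It then remains to check that $P\colon (S,F_1,F_2)\mapsto (S,F)$ is a quasi-inverse to $Q$. For $Q\circ P$, equation \eqref{eq:tensor1} gives $F\circ q_1(a_1)=F(a_1\boxtimes u_2)\cong F_1(a_1)\otimes_S u_S\cong F_1(a_1)$, and similarly for $F_2$, yielding a natural isomorphism $Q\circ P\cong \mathrm{Id}$. For $P\circ Q$, a tensor functor $F$ satisfies $F(a_1\boxtimes a_2)\cong F(q_1 a_1)\otimes_S F(q_2 a_2)$ by \eqref{eq:tensor1} and is right exact, so it is canonically recovered from its restrictions $F\circ q_i$, giving $P\circ Q\cong\mathrm{Id}$. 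On morphisms I would use that a symmetric monoidal natural isomorphism out of $A$ between right-exact functors is determined by, and can be assembled from, its values on the $\boxtimes$-objects, which factor precisely through the data on $A_1$ and $A_2$.

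The hard part will be the coherence bookkeeping in the middle step: verifying that the monoidal and symmetry constraints imposed on the reconstructed $F$ satisfy the pentagon, unit, and hexagon axioms, and that the isomorphisms defined on $\boxtimes$-objects extend coherently by right exactness. The conceptual point that makes this go through — and which I would isolate first — is that the images $q_1(a_1)$ and $q_2(a_2)$ commute in $A$ through the symmetry of $\otimes$, so that no compatibility datum between $F_1$ and $F_2$ beyond sharing the target $\B_S$ is required; this is exactly what identifies tensor functors $A\to\B_S$ with pairs of tensor functors $A_i\to\B_S$ and underlies the fibre-product description.
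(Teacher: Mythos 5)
Your proposal is correct and follows essentially the same route as the paper: $Q$ is induced by $q_1^*$ and $q_2^*$, the quasi-inverse sends a pair $(F_1,F_2)$ to the functor determined on $\boxtimes$-objects by $F_1(a_1)\otimes_S F_2(a_2)$, and both composites are compared to the identity via \eqref{eq:tensor1}. The only point the paper treats that you elide is that a general object of the $2$-fibre product is a triple $((S,G_1),(S',G_2),g)$ with $g\colon S\rightarrow S'$ an isomorphism in $\mathfrak S$, so the quasi-inverse must be defined as $G_1(a_1)\otimes_S g^*G_2(a_2)$; your construction covers only the strict case, which suffices up to equivalence but should be stated.
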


\begin{proof}  The functor $Q$ is given by the universal property of the fibre product $\uline{A_1}\times_\mathfrak S\uline{A_2}$ along with the functors $q_1^*: \uline{A}\rightarrow \uline{A_1}$ and $q_2^*: \uline{A}\rightarrow \uline{A_2}$.

More precisely, \[Q:(S,F)\mapsto ((S,F_1),(S,F_2),\mathrm{Id}_S)\] where $F_i: A_i\rightarrow \B_S$ sends $a_i\mapsto F(q_i(a_i))$.  It is straightforward to show that $F_i$ are tensor functors. 

Let $(f,\phi): (S',F')\rightarrow (S,F)$ be a morphism in $\uline A$ then $Q$ sends \[(f,\phi)\mapsto ((f,\phi_1),(f,\phi_2))\] where $\phi_i: F'_i\rightarrow f^*\circ F_i$ is give by \[\phi_{i,a_i}=\phi_{q_i(a_i)}: F'_i(a_i)=F(q_i(a_i))\longrightarrow f^*F(q_i(a_i))=f^*F_i(a_i).\]

Since $\phi$ is a \emph{symmetric monoidal} natural isomorphism, so are $\phi_i$.

We define a quasi-inverse $Q'$ as follows:  Let $((S,G_1),(S',G_2),g)$ be an object in $\uline{A_1}\times_\mathfrak S\uline{A_2}$ over $S$ where $g:S\rightarrow S'$ is an \emph{isomorphism}.  Then $Q'$ sends it to $(S,G)\in \uline{A}$ where \[G(a_1\boxtimes a_2)=G_1(a_1)\otimes_Sg^*G_2(a_2)\in\B_S.\]

Then we have \[Q'\circ Q:(S,F)\mapsto (S,F')\] where \[F'(a_1\boxtimes a_2)=F_1(a_1)\otimes_SF_2(a_2)=F(q_1(a_1))\otimes_SF(q_2(a_2))\cong F(a_1\boxtimes a_2)\] by \eqref{eq:tensor1} since $F$ is a tensor functor.  This implies that $Q'\circ Q\cong \mathrm{Id}$.

Conversely, we have \[Q\circ Q': ((S,G_1), (S',G_2),g)\mapsto ((S,G_1'),(S,G_2'),\mathrm{Id}_S)\] where \[\phi_{a_1}:G_1'(a_1)\cong G_1(a_1)\]and \[\psi_{a_2}:G_2'(a_2)\cong g^*G_2(a_2).\]

These isomorphisms are natural isomorphisms.

It is straightforward to verify that we have an isomorphism \[((\mathrm{Id}_S,\phi),(g,\psi)): ((S,G_1'),(S,G_2'),\mathrm{Id}_S)\stackrel{\cong}{\longrightarrow} ((S,G_1), (S',G_2),g).\]

This shows $Q\circ Q'\cong\mathrm{Id}$ and so we are done.  \end{proof}

\subsubsection{}\label{}  The basic example of the tensor product of two abelian categories is as follows.  Let $R_1$ and $R_2$ be two (not necessarily commutative) $k$-algebras which are right coherent; this means that every finitely generated right ideal is finitely presented.

Denote by $A_i$ the abelian category of finitely presented right $R_i$-modules, and denote by $A$ the abelian category of finitely presented right $(R_1\otimes_{k}R_2)$-modules.  Then by \cite[5.3]{deligne_tannakian} the tensor product over $k$ \[\boxtimes=\otimes_{k}:A_1\times A_2\rightarrow A\] makes $A$ the tensor product of $A_1$ with $A_2$.  In this case we only need to assume $k$ to be a commutative ring.

As an example:

\begin{prop}\label{prop:prodgroup}  Let $\B\rightarrow \mathfrak S$ be a category fibred in abelian tensor categories, and let $H_1$ and $H_2$ be finite groups.  Let $k$ be a field, then we have an isomorphism of categories fibred in groupoids over $\mathfrak S$:  \[\uline{(H_1\times H_2)\mathrm{-rep}_k}\stackrel{\cong}{\longrightarrow} \uline{H_1\mathrm{-rep}_k}\times_\mathfrak S\uline{H_2\mathrm{-rep}_k}.\]  \end{prop}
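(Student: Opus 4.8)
The plan is to recognize the statement as a direct instance of Lemma \ref{lem:prod}, once we have identified the abelian tensor category $\rep{(H_1\times H_2)}_k$ with the Deligne tensor product of $\rep{H_1}_k$ and $\rep{H_2}_k$. Concretely I would set $A_i=\rep{H_i}_k$ and $A=\rep{(H_1\times H_2)}_k$ and show $A\cong A_1\otimes A_2$ as abelian tensor categories; the proposition then follows by applying the functor $Q$ of Lemma \ref{lem:prod}.

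For the identification, first note that $\rep{H}_k$ is by definition the category of finite-dimensional right modules over the group algebra $k[H]$, and since $H$ is finite, $k[H]$ is a finite-dimensional $k$-algebra, hence two-sided Noetherian and in particular right coherent; its finitely presented modules are exactly the finite-dimensional ones. The standard $k$-algebra isomorphism $k[H_1]\otimes_k k[H_2]\xrightarrow{\cong} k[H_1\times H_2]$, $g_1\otimes g_2\mapsto (g_1,g_2)$, then lets me invoke the basic example \cite[5.3]{deligne_tannakian}: the category of finitely presented $k[H_1\times H_2]$-modules is the tensor product $A_1\otimes A_2$, realized through $\boxtimes=\otimes_k$. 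This is the step with the mildest hypotheses, since \cite[5.3]{deligne_tannakian} requires only that $k$ be a commutative ring, so we never need the perfectness assumption of Lemma \ref{lem:prod}, consistent with the proposition being stated for an arbitrary field.

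It then remains to check that the tensor product produced by Deligne's construction, characterized by \eqref{eq:tensor} together with the unit $u=u_1\boxtimes u_2$, agrees with the representation tensor product of Example \ref{par:exgrep}. Under the identification above, $a_1\boxtimes a_2$ is $a_1\otimes_k a_2$ with $(h_1,h_2)$ acting as $h_1\otimes h_2$, so forming the diagonal-action tensor product of $a_1\boxtimes a_2$ and $b_1\boxtimes b_2$ and regrouping the four factors yields $(a_1\otimes_1 b_1)\boxtimes(a_2\otimes_2 b_2)$, which is exactly \eqref{eq:tensor}; the trivial representation $k=k\boxtimes k$ is the unit. Hence $(A,\otimes)$ is the abelian tensor category $\rep{(H_1\times H_2)}_k$ with its representation tensor product, and the tensor functors $q_i$ become the evident inflation functors $a_i\mapsto a_i\boxtimes u_{3-i}$ (trivial action of the other factor).

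With this identification in hand, Lemma \ref{lem:prod} yields the desired isomorphism $\uline{\rep{(H_1\times H_2)}_k}\xrightarrow{\cong}\uline{\rep{H_1}_k}\times_{\mathfrak S}\uline{\rep{H_2}_k}$. The main point requiring care, and the only genuine obstacle, is the mismatch in hypotheses: Lemma \ref{lem:prod} is phrased for perfect $k$ because \cite[5.13]{deligne_tannakian} is what guarantees existence of $A_1\otimes A_2$ there, whereas here \cite[5.3]{deligne_tannakian} supplies the tensor product together with all the structural data actually used, namely the functor $\boxtimes$, the formula \eqref{eq:tensor}, the unit, and the $q_i$, over any field. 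Since the construction of $Q$ and its quasi-inverse $Q'$ in the proof of Lemma \ref{lem:prod} invokes only these structures and the universal property of the fibre product, that argument applies verbatim without perfectness, which is precisely what permits the proposition to hold for an arbitrary field $k$.
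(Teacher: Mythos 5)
Your proposal is correct and follows essentially the same route as the paper, which presents the proposition as an immediate instance of the ``basic example'' (finitely presented modules over right coherent $k$-algebras, via \cite[5.3]{deligne_tannakian} applied to $k[H_1]\otimes_k k[H_2]\cong k[H_1\times H_2]$) combined with Lemma \ref{lem:prod}. Your additional checks --- that the Deligne tensor structure characterized by \eqref{eq:tensor} agrees with the diagonal representation tensor product, and that the proof of Lemma \ref{lem:prod} uses only the structural data supplied by \cite[5.3]{deligne_tannakian} and hence needs no perfectness hypothesis --- are exactly the details the paper leaves implicit.
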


\subsection{Stack and descent}\label{sec:stack}

\subsubsection{}\label{}  Now let $\mathfrak S$ be a site.  Let $\pi: \B\rightarrow \mathfrak S$ be a category fibred in abelian tensor categories.

Let $S$ be an object in $\mathfrak S$ and let $\mathcal U=\{u_i: T_i\rightarrow S\}$ be a covering.  Denote by $T_{ij}$ the product $T_i\times_ST_j$ and $p_\ell$ the $\ell$-th projection from $T_{ij}$.

We have canonical natural isomorphisms \[can_{ij}: p_1^*u_i^*\longrightarrow p_2^*u_j^*\] between functors from $\B_S$ to $\B_{T_{ij}}$, and this gives a functor $\delta$ from the category $\B_S$ to the category $DD(\mathcal U)$ of descent data with respect to $\mathcal U$.  More precisely, we have \[\delta: x\mapsto (u_i^*(x), can_{ij,x})\in DD(\mathcal U).\]

Notice that the category $DD(\mathcal U)$ is naturally an abelian tensor category, and the functor $\delta$ is a tensor functor.

\begin{thm}\label{thm:stack}  Let $\pi:\B\rightarrow \mathfrak S$ be a category fibred in abelian tensor categories over a category $\mathfrak S$ with a given Grothendieck topology.  Let $A$ be an abelian tensor category.
\begin{enumerate}[(i)]
\item If the functor $\delta$ is fully faithful for every $S\in\mathfrak S$ and covering $\mathcal U$ then $p: \uline A\rightarrow \mathfrak S$ is a prestack.
\item If the functor $\delta$ is fully faithful and essentially surjective for every $S\in\mathfrak S$ and covering $\mathcal U$, then $p:\uline A\rightarrow \mathfrak S$ is a stack.
\end{enumerate}  \end{thm}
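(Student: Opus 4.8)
The plan is to read the two hypotheses on $\delta$ as descent statements \emph{inside} each fibre $\B_S$, and then to transport them to $\uline A$ one object of $A$ at a time. Recall that $p:\uline A\to\mathfrak S$ is a prestack exactly when, for objects over a common base, the presheaf of isomorphisms between them is a sheaf (descent for morphisms), and a stack when moreover every descent datum is effective (descent for objects). The key observation is that, since $DD(\mathcal U)$ is an abelian tensor category and $\delta:\B_S\to DD(\mathcal U)$ is a tensor functor, full faithfulness of $\delta$ is precisely the statement that the $\mathrm{Hom}$-presheaves of $\B$ are sheaves, while full faithfulness \emph{together with} essential surjectivity says that $\delta$ is an equivalence of tensor categories, i.e. that descent data in $\B$ are effective. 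Everything else is bookkeeping that passes these two facts from $\B$ to $\uline A$.

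For part (i), let $(S,F),(S,G)\in\uline A_S$ and let $\{u_i:T_i\to S\}$ be a covering. An isomorphism over $u_i$ between the restrictions is a symmetric monoidal natural isomorphism $\psi_i:u_i^*\circ F\Rightarrow u_i^*\circ G$; suppose a family of these agrees on the overlaps $T_{ij}$. First I would fix an object $a\in A$ and note that the components $(\psi_i)_a$ constitute a morphism $\delta(F(a))\to\delta(G(a))$ in $DD(\mathcal U)$, so by full faithfulness of $\delta$ there is a unique $\psi_a:F(a)\to G(a)$ in $\B_S$ with $u_i^*\psi_a=(\psi_i)_a$. Then I would check that $a\mapsto\psi_a$ is natural in $a$, symmetric monoidal, and componentwise an isomorphism: each of these is an equality (resp.\ invertibility) of morphisms of $\B_S$ that already holds after pullback along the covering, hence holds in $\B_S$ because the faithful half of the hypothesis detects morphisms of $\B_S$ on $\mathcal U$, and a fully faithful functor reflects isomorphisms. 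Uniqueness of the glued $\psi$ is the same separatedness. This exhibits the Isom-presheaf as a sheaf, so $\uline A$ is a prestack.

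For part (ii), assume now that $\delta$ is a tensor equivalence. Given a descent datum in $\uline A$ for $\mathcal U$ — objects $(T_i,F_i)$ together with a cocycle of symmetric monoidal natural isomorphisms $\theta_{ij}:p_1^*\circ F_i\Rightarrow p_2^*\circ F_j$ — I would assemble it into a single tensor functor $\widetilde F:A\to DD(\mathcal U)$ by $a\mapsto (F_i(a),(\theta_{ij})_a)$: the cocycle condition on the $\theta_{ij}$ is exactly what makes each image an object of $DD(\mathcal U)$, and naturality of the $\theta_{ij}$ makes $\widetilde F$ a functor. Choosing a tensor quasi-inverse $\delta^{-1}$ of the tensor equivalence $\delta$, I would set $F:=\delta^{-1}\circ\widetilde F:A\to\B_S$, which is again a tensor functor, so $(S,F)\in\uline A_S$. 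Since $\delta(F(a))\cong\widetilde F(a)=(F_i(a),(\theta_{ij})_a)$, reading off the $i$-th component yields symmetric monoidal natural isomorphisms $u_i^*\circ F\cong F_i$ compatible with the $\theta_{ij}$; thus $(S,F)$ restricts to the given datum, effectiveness holds, and uniqueness up to isomorphism follows from part (i). Hence $\uline A$ is a stack.

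The hard part will be the monoidal bookkeeping in (ii): verifying that $\widetilde F$ genuinely is a tensor functor. This needs that colimits in $DD(\mathcal U)$ are computed componentwise — which holds because the $p_\ell^*$ preserve colimits and therefore preserve the gluing isomorphisms — so that $\widetilde F$ preserves colimits; that the coherence isomorphisms $\sigma,\sigma_0$ of the $F_i$ glue to those of $\widetilde F$ precisely because the $\theta_{ij}$ are monoidal; and, the one genuinely nontrivial categorical input, that the quasi-inverse of a strong symmetric monoidal equivalence carries a canonical strong symmetric monoidal structure, so that $F=\delta^{-1}\circ\widetilde F$ is again a tensor functor. The corresponding verifications in (i) — naturality, monoidality and symmetry of the glued $\psi$ — are routine once one observes they are equalities of morphisms in $\B_S$ that have already been checked on the cover.
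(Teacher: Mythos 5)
Your part (i) is essentially the paper's own argument: glue the components $(\psi_i)_a$ object by object via full faithfulness of $\delta$, then verify naturality, monoidality and symmetry of the glued transformation by checking the relevant diagrams after pullback along the cover and invoking faithfulness. Part (ii), however, takes a genuinely different and more structural route. The paper never inverts $\delta$: it applies essential surjectivity one object $a$ at a time to produce $F(a)$ together with chosen isomorphisms $\lambda_i(a):u_i^*F(a)\to F_i(a)$, and then defines $F(h)$, the coherence maps $\sigma_{a,b}$, $\sigma_0$, and so on, by exhibiting each as the unique morphism whose pullbacks are prescribed --- checking each time, using the naturality and monoidality of the $\phi_{ij}$, that the prescribed family really is a morphism of descent data. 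You instead package the entire descent datum into a single tensor functor $\widetilde F:A\to DD(\mathcal U)$ and compose with a symmetric monoidal quasi-inverse of $\delta$. This buys a cleaner argument --- functoriality, colimit-preservation and coherence of $F$ all follow formally from the corresponding properties of $\widetilde F$ and $\delta^{-1}$ --- at the cost of the two inputs you correctly flag: that colimits and the tensor structure on $DD(\mathcal U)$ are computed componentwise (so that $\widetilde F$ is a tensor functor precisely because the $\theta_{ij}$ are monoidal natural isomorphisms), and that the quasi-inverse of a symmetric strong monoidal equivalence carries a canonical symmetric strong monoidal structure. Both facts are standard, and the paper implicitly grants the first when it asserts that $DD(\mathcal U)$ is an abelian tensor category and $\delta$ a tensor functor; so your proof is correct, and it localizes all of the paper's diagram-chasing into these two reusable categorical statements.
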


This theorem will be proved in the next few paragraphs; its slogan is:  ``If $\B$ is a (pre)stack, then so is $\uline A$''.  We remark that the converse statements also hold.

The conditions hold in the cases of Example \ref{par:exqcoh} (the descent of quasi-coherent sheaves in the fpqc topology) and Example \ref{par:exqrep} (see \cite{liu_sq}).  On the other hand, in the case of finite groups Example \ref{par:exgrep} (with covering families given by collections of subgroups whose union is equal to $G$), only the conditions in (i) hold, and so we get only a prestack.

\subsubsection{}\label{}  \emph{Remarks.}  The novelty of Theorem \ref{thm:stack} is that we consider categories of \emph{tensor} functors and \emph{monoidal} natural isomorphisms between them.  An analogous result for arbitrary cartesian functors is a very special case of \cite[II, Corollaire~2.1.5]{giraud_cohnonab}, which states that if $\pi_A:\A\rightarrow \mathfrak S$ and $\pi_B:\B\rightarrow \mathfrak S$ are fibred categories over a site $\mathfrak S$, and $\B$ is a (pre)stack, then the fibred category \[\mathrm{CART}(\A,\B)\] over $\mathfrak S$ is also a (pre)stack.

Here the fibred category $\mathrm{CART}(\A,\B)$ is defined so that the fibre category $\mathrm{CART}(\A,\B)_S$ over $S\in\mathfrak S$ is the category \[\textbf{Cart}_{\mathfrak S_{/S}}(\A_{/S},\B_{/S})\] of cartesian functors between the fibred categories $\A_{/S}$ and $\B_{/S}$ over $S$.  Here $\A_{/S}$ is the category whose objects are pairs $(a,f)$ where $a\in \A$ and $f: \pi_A(a)\rightarrow S$ is a morphism in $\mathfrak S$.

To compare Giraud's theorem above with our situation, let $A$ be a category, and let $\A=A\times\mathfrak S$.  With the identity functor $\mathrm{Id}_A$ as the pull-back functor, $\A$ is a fibred category over $\mathfrak S$.  In this case we have $\A_{/S}\cong A\times (\mathfrak S_{/S})$.

Then we have an equivalence between fibre categories \[\mathrm{Fun}(A,\B_S)\longrightarrow \textbf{Cart}_{\mathfrak S_{/S}}(A\times (\mathfrak S_{/S}),\B_{/S})\] given by \[F\mapsto ((a,f)\mapsto (f^*F(a),f))\] with quasi-inverse \[\tilde F\mapsto \tilde F|_{A\times\mathrm{Id}_S}.\]

It seems possible to modify the proof of \cite[II, Corollaire~2.1.5]{giraud_cohnonab} to give the proof of a more general version of Theorem \ref{thm:stack}.  In the following we give a direct proof; by the remarks above, the main point is to show that tensor functors glue to tensor functors, and symmetric monoidal isomorphisms glue to symmetric monoidal isomorphisms.

\subsubsection{}\label{}  Let $x=(S_0,F)$ and $y=(S_0,G)$ be objects in $\uline A$ over the same object $S_0\in\mathfrak S$.  If $f:S\rightarrow S_0$ is a morphism, then we have objects $f^*x=(S,f^*\circ F)$ and $f^*y=(S,f^*\circ G)$ over $S$, and the usual definition \[\mathcal Isom_{\uline A}(x,y)(S)=\{ \phi: f^*\circ F\stackrel{\cong}{\longrightarrow}f^*\circ G \}\]
gives a presheaf of sets on the category $\mathfrak S_{S_0}$ of objects over $S_0$.  With $x$ and $y$ fixed and understood we simply denote this set by $\mathcal I(S)$.  For every $\phi\in \mathcal I(S)$ and morphism $h:a\rightarrow b$ in $A$, we have a commutative diagram \begin{equation}\label{eq:phi}\xymatrix{ f^*F(a) \ar[r]^-{\phi_a} \ar[d]_-{f^*F(h)} & f^*G(a)\ar[d]^-{f^*G(h)} \\ f^*F(b) \ar[r]^-{\phi_b} & f^*G(b)}\end{equation} in $\B_{S}$ whose rows are isomorphisms.

\begin{proof}[Proof of Theorem \ref{thm:stack}(i)]

Let $\mathcal U=\{u_i: T_i\rightarrow S\}$ be a covering in $\mathfrak S$, then we have the functor \[\delta: \B_{S}\longrightarrow DD(\mathcal U).\]

Let $a$ be an object in $A$ and let $\phi$ be in $\mathcal I(S)$; then $\phi_a$ is a morphism in $\B_{S}$ from $f^*F(a)$ to $f^*G(a)$.  By the faithfulness assumption, we see that $\phi_a$ is determined by its pull-backs \[u_i^*\phi_a: u_i^*f^*F(a)\longrightarrow u_i^*f^*G(a)\] in $\B_{T_i}$ since $\{u_i^*\phi_a\}=\delta(\phi_a)$.  Hence we conclude that the natural map \[\mathcal I(S)\longrightarrow \prod_i\mathcal I(T_i)\] induced by $u_i$ is an injection.

Now consider the next natural maps \[\prod_i\mathcal I(T_i)\rightrightarrows \prod_{i,j}\mathcal I(T_{ij})\] induced by the two projections $p_1,p_2$ from $T_{ij}$.  Suppose \[\phi'_i: u_i^*\circ f^*\circ F\longrightarrow u_i^*\circ f^*\circ G\] are in $\prod \mathcal I(T_i)$ with the same image under the two arrows into $\prod \mathcal I(T_{ij})$.  To be precise, this means that $\phi'_i$ are monoidal natural isomorphisms such that the diagram of functors from $A$ into $\B_{T_{ij}}$:

\[\xymatrix{ p_1^*u_i^*f^*F\ar[rr]^-{p_1^*\phi'_{i}} \ar[d]_-{can_{ij}} && p_1^*u_i^*f^*G\ar[d]^-{can_{ij}}  \\  p_2^*u_j^*f^*F\ar[rr]_-{p_2^*\phi'_{j}} && p_2^*u_j^*f^*G}\]is commutative.  (Recall that $can_{ij}$ are the canonical natural isomorphism between $p_1^*u_i^*$ and $p_2^*u_j^*$.)

Then for any $a\in A$ we get a isomorphisms \[\phi'_{i,a}:u_i^*f^*F(a)\longrightarrow u_i^*f^*G(a),\]and the commutative diagram above means precisely that $\{\phi'_{i,a}\}$ is a morphism in $DD(\mathcal U)$ from $(u_i^*f^*F(a),can_{ij,f^*F(a)})$ to $(u_i^*f^*G(a),can_{ij,f^*G(a)})$.

By the fullness assumption we get a (unique) isomorphism $\phi_a$ from $f^*F(a)$ to $f^*G(a)$ such that $u_i^*\phi_a=\phi'_{i,a}$ for every $i$.

The association $a\mapsto \phi_a$ is a natural transformation:  To this end we need to show that the diagram \eqref{eq:phi} is commutative for every morphism $h:a\rightarrow b$ in $A$.  But this diagram is commutative when applied with $u_i^*$ since $\phi'_i$ is a natural transformation, for every $i$, hence we conclude that $\phi$ is a natural transformation by the faithfulness of $\delta$.

Finally, we need to verify that $\phi$ is a \emph{symmetric monoidal} natural isomorphism.  We verify that it is monoidal, while its being symmetric can be shown in the same way.  That is, we need to verify that the following diagrams

\[\xymatrix{ f^*F(a\otimes b) \ar[r]^-{\phi_{a\otimes b}} \ar[d]_-{f^*\sigma_{a,b}} & f^*G(a\otimes b) \ar[d]_-{f^*\tau_{a,b}} \\ f^*F(a)\otimes_{S}f^*F(b) \ar[r]_-{\phi_a\otimes_{S}\phi_b} & f^*G(a)\otimes_{S}f^*F(b)  }\]and

\[ \xymatrix{u_{S}\ar[d]_-{f^*\sigma_0} \ar@{=}[r] & u_{S} \ar[d]^-{f^*\tau_0} \\ f^*F(u) \ar[r]_-{\phi_u} & f^*G(u)}\]are commutative.  But this follows once again from the faithfulness assumption and the fact that these diagrams are commutative when applied with $u_i^*$ for every $i$, since $\phi'_i$ are monoidal transformations.

Hence we conclude that the sequence \[\mathcal I(S)\rightarrow \prod_i\mathcal I(T_i)\rightrightarrows \prod_{i,j}\mathcal I(T_{ij})\] is exact, as required.  \end{proof}

\subsubsection{}\label{}  Here we finish the proof of Theorem \ref{thm:stack}.

\begin{proof}[Proof of Theorem \ref{thm:stack}(ii)]  Let $\mathcal U=\{u_i:T_i\rightarrow S\}$ be a covering in $\mathfrak S$.  Suppose we have objects $(T_i,F_i)$ in $\uline A$ along with isomorphisms $\phi_{ij}:p_1^*\circ F_i\rightarrow p_2^*\circ F_j$ satisfying the cocycle condition \[p^*_{13}\phi_{ik}=(p^*_{23}\phi_{jk})\circ (p^*_{12}\phi_{ij})\]  for every $i,j,k$, where $p_{\ell,\ell'}$ are projections from $T_{ijk}=T_i\times_ST_j\times_ST_k$.

\[\xymatrix{ & \B_{T_{ik}} && \B_{T_i}\ar[dd]^-{p^*_1} \ar[ll]_-{p^*_1} \ar@{=>}[dddl]_-{\phi_{ij}} \ar@{=>}[dlll]_-{\phi_{ik}}\\  \B_{T_k}  \ar[ur]^-{p^*_2} \ar[dd]_-{p^*_2}&& A\ar[ll]^-{F_k} \ar[dd]_-{F_j}  \ar[ur]^-{F_i}\\  &&& \B_{T_{ij}} \\ \B_{T_{jk}} && \B_{T_j} \ar[ur]_-{p^*_2} \ar[ll]^-{p^*_1} \ar@{=>}[uull]^-{\phi_{jk}} }\]

Then for every object $a$ in $A$, we have isomorphisms $\phi_{ij,a}: p_1^*F_i(a)\rightarrow p_2^*F_j(a)$ satisfying the cocyle condition.  That is, $(F_i(a),\phi_{ij,a})$ is an object in $DD(\mathcal U)$.

Hence by the essential surjectivity assumption we have an object denoted suggestively as $F(a)$ in $\B_{S}$ along with isomorphisms $\lambda_i(a): u_i^*F(a)\rightarrow F_i(a)$ satisfying

\begin{equation}\label{eq:comp}\xymatrix{ p_1^*u_i^*F(a) \ar[r]^-{p_1^*\lambda_i(a)} \ar[d]_-{can_{ij,F(a)}} & p_1^*F_i(a)\ar[d]^-{\phi_{ij,a}} \\   p_2^*u_j^*F(a) \ar[r]^-{p_2^*\lambda_j(a)} & p_2^*F_j(a).   }\end{equation}


If $h: a\rightarrow b$ is a morphism in $A$, then we define $F(h): F(a)\rightarrow F(b)$ to be the unique morphism pulling-back via $u_i$ (for each $i$) to the composition \[u_i^*F(a)\stackrel{\lambda_i(a)}{\longrightarrow} F_i(a)\stackrel{F_i(h)}{\longrightarrow} F_i(b) \stackrel{\lambda_i(b)^{-1}}{\longrightarrow} u_i^*F(b).\]

To be more careful, we need to show that the composition above indeed is a morphism in $DD(\mathcal U)$.  That is, we need to verify that the following diagram is commutative:

\begin{equation}\label{eq:glue}  \xymatrix{p_1^*u_i^*F(a) \ar[r]^-{p_1^*\lambda_i(a)}\ar[d]_-{can_{ij,F(a)}} & p_1^*F_i(a)\ar[r]^-{p_1^*F_i(h)} \ar[d]_-{\phi_{ij,a}} & p_1^*F_i(b) \ar[r]^-{p_1^*\lambda_i(b)^{-1}} \ar[d]^-{\phi_{ij,b}} & p_1^*u_i^* F(b)\ar[d]^-{can_{ij,F(b)}} \\  p_2^*u_j^*F(a) \ar[r]_-{p_2^*\lambda_j(a)} & p_2^*F_j(a)\ar[r]_-{p_2^*F_j(h)} & p_2^*F_j(b) \ar[r]_-{p_2^*\lambda_j(b)^{-1}} & p_2^*u_j^* F(b).} \end{equation}

The commutativity of the first and the last squares is the compatibility condition \eqref{eq:comp}, and the commutativity of the second square follows from the fact that $\phi_{ij}$ is a natural transformation.

The association $a\mapsto F(a)$ from $A$ to $\B_{S}$ is then a functor: the fact that it respects composition of morphisms follows from the faithfulness assumption and that $F_i$ are functors.  Moreover, using the faithfulness and fullness assumptions it is straightforward to show that $F$ preserves colimits since the $F_i$ do.

It remains to show that $F$ is a symmetric monoidal functor, in particular we need to define for every $a,b\in A$ an isomorphism \[\sigma_{a,b}: F(a\otimes b)\longrightarrow F(a)\otimes_S F(b)\] in $\B_{S}$ as well as \[\sigma_0: u_S\longrightarrow F(u).\]

We have isomorphisms in $\B_{T_i}$: \[u_i^*F(a\otimes b)\stackrel{\lambda_i(a\otimes b)}{\longrightarrow} F_i(a\otimes b)\stackrel{\sigma_{i,a,b}}{\longrightarrow} F_i(a)\otimes_{T_i}F_i(b)\longrightarrow u_i^*F(a)\otimes_{T_i}u_i^*F(b),\]where $\sigma_{i,a,b}$ is the isomorphism for the tensor functor $F_i$, and the last arrow is the inverse of $\lambda_i(a)\otimes_{T_i}\lambda_i(b)$.

By considering a diagram similar to \eqref{eq:glue} we see that there is a unique isomorphism \[\sigma_{a,b}: F(a\otimes b)\rightarrow F(a)\otimes_SF(b)\] for every $a,b\in A$ such that $u_i^*\sigma_{a,b}=\sigma_{i,a,b}$.  (Here we used the fact that the isomorphisms $\phi_{ij}$ are \emph{monoidal} natural isomorphisms.)

The isomorphisms $\sigma_{a,b}$ are required to satisfy coherence conditions in the form of commutativity diagrams.  These follow from the commutativity of diagrams after pulling-back via $u_i$ using the faithfulness assumption.  The isomorphism $\sigma_0$ can be constructed in the same way.  Details are omitted.

Hence we conclude that $(S,F)$ is an object in $\uline A$ over $S$ pulling-back to $F_i$ via $u_i^*$, as required.  \end{proof}

\subsection{Sheaves of modules}

\subsubsection{}\label{}  In this section we work under the conditions of Theorem \ref{thm:stack}.  Namely, $\B\rightarrow \mathfrak S$ is a category fibred in abelian tensor categories over a site $\mathfrak S$ where all the functors $\delta: \B_S\rightarrow DD(\mathcal U)$ for all covering $\mathcal U$ of $S\in\mathfrak S$ are equivalences.

In this case we define a sheaf of rings on $\mathfrak S$ as follows.  Recall that every fibre category $\B_S$ has a unit object $u_S$.  Let $\mathcal O$ be the presheaf on $\mathfrak S$ given by \[S\mapsto \mathcal O(S):=\mathrm{End}_{\B_S}(u_S),\]where if $f: S'\rightarrow S$ is a morphism in $\mathfrak S$ then the restriction map $\mathcal O(S)\rightarrow \mathcal O(S')$ is induced by the functor $f^*: \B_S\rightarrow \B_{S'}$.

The assumption that the descent functors $\delta$ are fully faithful implies that this is indeed a sheaf.  This is a sheaf of \emph{commutative} rings if we impose the additional assumption on the categories $\B_S$ that the two isomorphisms $u_S\otimes_Sx\cong x$ and $x\otimes_Su_S\cong x$ for any $x\in\B_S$ give the same isomorphism $u_S\otimes_Su_S\cong u_S$ when $x$ is taken to be $u_S$ \cite[Lemma~9.6]{balmer_presheaves}.

\subsubsection{}\label{}  Let $A$ be an abelian tensor category, then we have a stack $p:\uline A\rightarrow \mathfrak S$ by Theorem \ref{thm:stack}.  Composing with the functor $p$ defines a ring-valued \emph{presheaf} on $\uline A$ which we will denote by $\mathcal O_{\uline A}$: \[(S,F)\mapsto \mathrm{End}_{\B_S}(u_S).\]

There is an inherited Grothendieck topology on the category $\uline A$:  A family of morphisms \[\mathcal V=\{v_i:y_i\rightarrow x\}\] with a fixed target $x\in\uline A$ is by definition a covering if $p(\mathcal V)=\{p(v_i):p(y_i)\rightarrow p(x)\}$ is a covering in $\mathfrak S$.  In this topology the presheaf defined above is a sheaf on $\uline{A}$.

We denote by $\Mod{\mathcal O_{\uline A}}$ the category of sheaves of right $\mathcal O_{\uline A}$-modules on $\uline{A}$.  With $\mathcal O_{\uline A}$ on $\uline A$ as the \emph{structure sheaf}, we have the category $\qcoh{\uline A}$ of quasi-coherent sheaves on $\uline{A}$.





\subsubsection{}\label{par:sheaf}  Now we define a natural covariant functor \[\mathcal F:A\longrightarrow \Mod{\mathcal O_{\uline A}}.\]

For every $a\in A$ let $\mathcal F_a$ be the contravariant functor on $\uline{A}$ defined by \[\mathcal F_a: (S,F)\mapsto \mathrm{Hom}_{\B_S}(u_S,F(a)).\]

This is an object in $\Mod{\mathcal O_{\uline A}}$.  The functor $a\mapsto \mathcal F_a$ is additive but probably not left or right exact.  It is not clear under what conditions on the category $A$ is the functor $a\mapsto \mathcal F_a$ full, faithful, or a tensor functor.

\subsubsection{}\label{}  We say an object $a\in A$ is \emph{locally of finite presentation (with respect to $\pi:\B\rightarrow \mathfrak S$)} if for every $x=(S,F)\in\uline{A}$ there exists a covering $\mathcal V=\{v_i:y_i\rightarrow x\}$ where $y_i=(T_i,F_i)$ and $v_i=(u_i,\phi_i)$ such that for every $i$ the object $F_i(a)$ in $\B_{T_i}$ admits a finite presentation \begin{equation}\label{eq:fp}u_{T_i}^p\longrightarrow u_{T_i}^q\longrightarrow F_i(a)\longrightarrow 0.\end{equation}

More generally, we say $a$ is \emph{locally finite} if such a covering exists with a surjection \[u_{T_i}^q\longrightarrow F_i(a)\longrightarrow 0.\]

\begin{lem}  Suppose for every $S\in\mathfrak S$, that $u_S$ is a projective object in $\B_S$.
\begin{enumerate}[(i)]
\item The functor $a\mapsto \mathcal F_a$ is right exact.
\item Let $a\in A$ be locally of finite presentation, then the sheaf $\mathcal F_a$ on $\uline{A}$ is locally of finite presentation.
\end{enumerate}
\end{lem}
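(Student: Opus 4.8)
The plan for part (i) is to factor $a\mapsto\mathcal F_a$ through two functors whose right exactness we already control, and then to observe that right exactness may be tested pointwise because the candidate cokernel is already a sheaf. Fix a right exact sequence $a'\to a\to a''\to 0$ in $A$ and an object $(S,F)\in\uline A$. Since $F$ is a tensor functor it is right exact, so $F(a')\to F(a)\to F(a'')\to 0$ is right exact in $\B_S$. The hypothesis that $u_S$ is projective makes $\mathrm{Hom}_{\B_S}(u_S,-)$ exact on short exact sequences, hence it carries right exact sequences to right exact sequences of abelian groups; applying it gives that
\[\mathcal F_{a'}(S,F)\longrightarrow \mathcal F_a(S,F)\longrightarrow \mathcal F_{a''}(S,F)\longrightarrow 0\]
is exact. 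Thus the presheaf cokernel of $\mathcal F_{a'}\to\mathcal F_a$ agrees objectwise with $\mathcal F_{a''}$, which is already a sheaf; therefore the sheaf cokernel (its sheafification) equals $\mathcal F_{a''}$, and $a\mapsto\mathcal F_a$ is right exact as a functor into $\Mod{\mathcal O_{\uline A}}$.

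For part (ii) the subtlety is that the finite presentation granted by the hypothesis lives in $\B_{T_i}$ — in powers of the unit $u_{T_i}$ — rather than in $A$, so it cannot be fed directly into $\mathcal F_{(-)}$. I would introduce an auxiliary functor. Fix $x=(S,F)\in\uline A$ and the covering $\{v_i:y_i\to x\}$ with $y_i=(T_i,F_i)$, $v_i=(u_i,\phi_i)$, and presentation $u_{T_i}^p\to u_{T_i}^q\to F_i(a)\to 0$ in $\B_{T_i}$. For $M\in\B_{T_i}$ define a sheaf $\mathcal H_M$ on the slice $\uline A_{/y_i}$ by $\mathcal H_M(z)=\mathrm{Hom}_{\B_{T'}}(u_{T'},g^*M)$, where $z=(T',F')\xrightarrow{(g,\psi)}y_i$; this is a right $\mathcal O_{\uline A}$-module via precomposition and a sheaf by the same descent argument that makes $\mathcal O_{\uline A}$ a sheaf.

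The key identifications are then $\mathcal H_{u_{T_i}}=\mathcal O_{\uline A}|_{y_i}$ (since $g^*u_{T_i}\cong u_{T'}$), whence $\mathcal H_{u_{T_i}^q}=\mathcal O_{\uline A}^q|_{y_i}$ because $g^*$ preserves finite direct sums and $\mathrm{Hom}(u_{T'},-)$ turns them into products, and $\mathcal H_{F_i(a)}\cong\mathcal F_a|_{y_i}$ via the isomorphism $\psi\colon F'\xrightarrow{\cong}g^*\circ F_i$. Because $g^*$ is right exact and each $u_{T'}$ is projective, the functor $M\mapsto\mathcal H_M$ is right exact by exactly the objectwise-to-sheaf argument of part (i). Applying it to the given presentation yields a right exact sequence of $\mathcal O_{\uline A}|_{y_i}$-modules
\[\mathcal O_{\uline A}^p|_{y_i}\longrightarrow \mathcal O_{\uline A}^q|_{y_i}\longrightarrow \mathcal F_a|_{y_i}\longrightarrow 0,\]
which is the required local finite presentation of $\mathcal F_a$ over the covering $\{y_i\to x\}$.

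The main obstacle is bookkeeping rather than a deep idea: one must check that $M\mapsto\mathcal H_M$ is genuinely functorial over $\uline A_{/y_i}$, which means tracking the canonical isomorphisms of the fibred category (the cleavage identities $h^*g^*\cong(gh)^*$ and their coherence) and verifying $\mathcal O_{\uline A}$-linearity of all induced maps. The single conceptual hinge, common to both parts, is that projectivity of the units $u_S$ upgrades the always–left exact functors $\mathrm{Hom}_{\B_S}(u_S,-)$ to exact functors, so that composing them with the right exact $F$ or $g^*$ yields a right exact functor; the passage from objectwise to sheaf statements is then automatic, since in each case the candidate cokernel is already a sheaf.
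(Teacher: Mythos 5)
Your proof is correct and follows essentially the same route as the paper's (very terse) argument: part (i) is the composite of the right exact $F$ with the functor $\mathrm{Hom}_{\B_S}(u_S,-)$, made exact by projectivity of $u_S$, and part (ii) applies that same exact functor to the presentation $u_{T_i}^p\to u_{T_i}^q\to F_i(a)\to 0$. Your extra bookkeeping with the auxiliary sheaves $\mathcal H_M$ on the slice over $y_i$ just makes explicit the sheaf-level statement that the paper leaves implicit.
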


\begin{proof}  (i)  Clear.

(ii)  Choose the covering $\mathcal V$ for $a$ as in the definition above.  Applying the \emph{exact} functor $\mathrm{Hom}_{\B_{T_i}}(u_{T_i},-)$ to the sequence \eqref{eq:fp} gives the result.   \end{proof}

\subsubsection{}\label{}  \emph{Remarks.}  We should point out that we could replace $u_S$ in the constructions above with any system of objects $v_S\in \B_S$ admitting a morphism $f^*v_S\rightarrow v_{S'}$ in $\B_{S'}$ for every morphism $f: S'\rightarrow S$ in $\mathfrak S$.  This results in a different ``structure sheaf'', which may be more useful.  For instance, in the case when $\mathfrak S$ is the category of quivers, the vertex-wise tensor unit object carries less information than the path algebra.

Slightly more generally, consider a bi-functor $A\times A\rightarrow \Mod{\mathcal O_{\uline A}}$ defined by \[(a,b)\mapsto \mathcal F_{a,b}\] where \[\mathcal F_{a,b}((S,F))= \mathrm{Hom}_{\B_S}(F(a),F(b)).\]

Then the sheaves $\mathcal F_{a,b}$ on $\uline A$ have the additional structure of composition \[\circ: \mathcal F_{a,b}\times \mathcal F_{b,c}\rightarrow \mathcal F_{a,c}.\]

In particular every $\mathcal F_{a,a}$ is a sheaf of rings over which $\mathcal F_{a,b}$ is a right module and $\mathcal F_{b,a}$ is a left module.  Moreover, there is a natural morphism \[\mathcal F_{a,b}\otimes_{\mathcal O_{\uline A}} \mathcal F_{c,d}\rightarrow \mathcal F_{a\otimes c,b\otimes d}\]

\section{Example: Schemes}\label{sec:scheme}
In this Section we show that our construction in Section \ref{sec:ABstack} can be used to recover schemes from their category of quasi-coherent sheaves.

\subsection{Reconstruction of affine schemes}

\subsubsection{}\label{} Fix a commutative ring $\Lambda$ with identity, and all schemes considered in this section will be over $\Lambda$.

Let $\mathfrak S$ be the category of affine schemes (over $\Lambda$) with the \'etale topology, and $\B\rightarrow \mathfrak S$ as in Example \ref{par:exqcoh}; in particular for every affine scheme $S$ we have $\B_S=\qcoh{S}$.

\subsubsection{}\label{} Let $X$ be a scheme; denote by $\uline X$ the associated stack over $\mathfrak S$.  Consider the abelian tensor category $\qcoh{X}$ of quasi-coherent sheaves with its sheaf tensor product.  Then we have a morphism of stacks \[\uline X\stackrel{\alpha}{\longrightarrow} \uline{\qcoh{X}},\] sending $\alpha:(f: S\rightarrow X)\mapsto (S,f^*)$ and $\alpha:(g: S'\rightarrow S)\mapsto (g,\phi)\in\mathrm{Hom}((S',f'^*), (S,f^*))$ where $\phi$ denotes the natural isomorphism $f'^*\rightarrow g^*f^*$.

\subsubsection{}\label{}  \emph{Example.}  Suppose $\Lambda=k$ is a field.  Let $A$ be the abelian tensor category $\mathrm{Vect}_k$ of (possibly infinite dimensional) vector spaces over $k$ with the usual tensor product.  Then we have $A=\qcoh{\mathrm{Spec}(k)}$ and a stack morphism \[\uline{\mathrm{Spec}(k)}\longrightarrow \uline A.\]

If $(S,F)\in\uline A$ then we must have $F(k)\cong \mathcal O_S$, and so $\uline A{}_S$ has only one object (up to isomorphism) for every $S\in\mathfrak S$, since every object in $A$ is a direct sum of copies of $k$, and $F$ preserves direct sums.

Now suppose $(\mathrm{Id}_S,\phi)$ is an automorphism of $(S,F)$ in $\uline A$; in particular $\phi:F\rightarrow F$ is a \emph{monoidal} natural isomorphism.  Then $\phi_k$ is an automorphism of $\mathcal O_S$.  But then we have \[\phi_k=\phi_{k\otimes k}=\phi_k\otimes \phi_k=(\phi_k)^2\]as elements in $\mathrm{Aut}(\mathcal O_S)$, hence we must have $\phi_k=\mathrm{Id}_{\mathcal O_S}$.

Similarly we see that there is always a unique lifting $(f,\phi):(S',F')\rightarrow (S,F)$ in $\uline A$ for every scheme morphism $f:S'\rightarrow S$.  And we conclude that the morphism \[\alpha: \uline{\mathrm{Spec}(k)}\longrightarrow \uline A\] is an isomorphism.

\subsubsection{}\label{}  More generally, we have

\begin{thm}\label{thm:reconst}  Let $X$ be an affine scheme, then the morphism $\alpha: \uline X \rightarrow \uline{\qcoh{X}}$ is an isomorphism.  \end{thm}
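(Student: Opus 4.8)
The plan is to reduce everything to the level of fibres. Write $X=\mathrm{Spec}\,R$ and let $S=\mathrm{Spec}\,B$ be a test object in $\mathfrak S$, so that $\qcoh{X}$ is the category $\modd{R}$ of $R$-modules and $\B_S=\qcoh{S}$ is $\modd{B}$. Over $S$ the groupoid $\uline X_S$ is the \emph{discrete} groupoid on the set $\mathrm{Hom}_{\mathfrak S}(S,X)$ of $\Lambda$-algebra maps $R\to B$ (a morphism $(S,f)\to(S,g)$ over $\mathrm{Id}_S$ forces $f=g$ and is then unique), while $\uline{\qcoh{X}}_S$ is the groupoid of tensor functors $F\colon\modd{R}\to\modd{B}$ and monoidal natural isomorphisms. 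Since $\alpha$ is already a $1$-morphism of categories fibred in groupoids over $\mathfrak S$, it suffices to prove that $\alpha_S$ is an equivalence of groupoids for every $S$: concretely, (a) every $F$ is monoidally isomorphic to $f^\ast$ for a \emph{unique} $f\colon S\to X$, and (b) the target groupoid is discrete.

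For the reconstruction, the unit isomorphism $\sigma_0\colon u_S\to F(u)$ identifies $F(R)\cong B$ as $B$-modules. Applying $F$ to endomorphisms then yields $\rho\colon R=\mathrm{End}_R(R)\to\mathrm{End}_B(F(R))=\mathrm{End}_B(B)=B$, which is additive (because $F$ is additive) and multiplicative and unital (because $F$ respects composition and identities); compatibility with the structure maps from $\Lambda$ follows from unit-preservation, so $\rho$ defines $f:=\mathrm{Spec}\,\rho\colon S\to X$ in $\mathfrak S$. I then claim $F\cong f^\ast=-\otimes_R B$. This is an Eilenberg--Watts type statement: both functors preserve colimits, every $R$-module $M$ sits in a presentation $R^{(I)}\to R^{(J)}\to M\to 0$, and on free modules $F(R^{(J)})\cong B^{(J)}\cong f^\ast(R^{(J)})$ compatibly with all $R$-linear maps between frees, since such maps are matrices over $R=\mathrm{End}_R(R)$ and both functors send them to the corresponding matrices over $B$ via $\rho$. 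Right exactness then propagates the comparison isomorphism from free modules to every $M$.

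The delicate point, and the step I expect to be the main obstacle, is to check that this comparison $F\cong f^\ast$ is not merely an isomorphism of underlying additive functors but a \emph{symmetric monoidal} one. Here I would use the symmetric monoidal structure of $F$: the isomorphism is pinned down on the unit by $\sigma_0$, and the structure isomorphisms $F(M\otimes_R N)\cong F(M)\otimes_B F(N)$ must be matched with those of $f^\ast$. On the generator both reduce to the canonical identification $B\otimes_B B\cong B$, and since all functors in sight ($F$, $f^\ast$, and the two tensor bifunctors) preserve colimits in each variable, the coherence of the monoidal comparison extends from $R$ to arbitrary modules by the same presentation argument. This gives essential surjectivity of $\alpha_S$.

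Finally, for discreteness I argue as in the Example preceding the theorem. Given $f=\mathrm{Spec}\,\rho$, $g=\mathrm{Spec}\,\sigma$ and a monoidal natural isomorphism $\phi\colon f^\ast\to g^\ast$, the component $\phi_R\colon B\to B$ is multiplication by a unit $\beta\in B^\times$, and naturality against multiplication by $r\in R=\mathrm{End}_R(R)$ gives $\beta\,\rho(r)=\sigma(r)\,\beta$; commutativity of $B$ forces $\rho=\sigma$, hence $f=g$, so distinct morphisms yield non-isomorphic functors and $\alpha_S$ is injective on isomorphism classes. Taking $f=g$, monoidality on the unit yields $\beta=\beta^2$, so $\beta=1$ and $\phi_R=\mathrm{Id}$; then $\phi_{R^{(J)}}=\mathrm{Id}$ by additivity and, using a surjection $R^{(J)}\twoheadrightarrow M$ together with the right exactness of $f^\ast$ and naturality of $\phi$, one obtains $\phi_M=\mathrm{Id}$ for every $M$. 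Thus $f^\ast$ carries only the identity monoidal automorphism. Combining these facts, $\alpha_S$ is fully faithful (source and target both discrete, with matching $\mathrm{Hom}$-sets) and essentially surjective, hence an equivalence for every $S$ — whence $\alpha$ is an isomorphism of stacks.
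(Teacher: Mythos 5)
Your proof is correct and follows essentially the same route as the paper's: you recover the ring homomorphism from $F(1)\colon \mathrm{End}(\mathcal O_X)\to\mathrm{End}(\mathcal O_S)$, identify $F$ with $f^*$ via free presentations and right exactness (this is the paper's Lemma \ref{lem:right}), and reduce both the monoidal compatibility of the comparison and the triviality of monoidal automorphisms to the unit object (the $\beta=\beta^2$ trick also appears in the paper's $\mathrm{Vect}_k$ example). The only difference is packaging --- you verify a fibrewise equivalence of groupoids where the paper writes down an explicit quasi-inverse $\alpha'$ --- which is not a substantive change.
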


For any tensor functor $F:A\rightarrow B$ between abelian tensor categories with unit objects $u_A$ and $u_B$ respectively, we denote by $F(1): \mathrm{End}(u_A)\rightarrow \mathrm{End}(F(u_A))\cong \mathrm{End}(u_B)$ the homomorphism given by $F$; here the last two endomorphism rings are identified using the isomorphism $\sigma_0: u_B\rightarrow F(u_A)$.

\begin{lem}\label{lem:ring}  Let $X$ be an affine scheme, and let $S$ be a scheme.  If $\psi: \Gamma(\mathcal O_X)\rightarrow \Gamma(\mathcal O_S)$ is a ring homomorphism inducing a scheme morphism $f:S\rightarrow X$, then the following diagram is commutative:

\centerline{\xymatrix{ \mathrm{End}(\mathcal O_X) \ar[r]^-{f^*(1)} \ar[d]_-\Gamma & \mathrm{End}(\mathcal O_S)\ar[d]^-\Gamma \\ \Gamma(\mathcal O_X) \ar[r]_-\psi & \Gamma(\mathcal O_S).  }}

Here $f^*:\qcoh{X}\rightarrow \qcoh{S}$ is the pull-back functor, and the vertical arrows are natural isomorphisms.\end{lem}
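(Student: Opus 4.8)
The plan is to trace a single global section through the square, using the fact that for any scheme $Y$ the vertical isomorphism $\Gamma: \mathrm{End}_{\mathcal{O}_Y}(\mathcal{O}_Y) \xrightarrow{\sim} \Gamma(\mathcal{O}_Y)$ identifies an $\mathcal{O}_Y$-linear endomorphism $\alpha$ of $\mathcal{O}_Y$ with the section $\alpha(1)$; equivalently, it sets up a bijection between endomorphisms and multiplication operators $m_s\colon t\mapsto st$ for $s\in\Gamma(\mathcal{O}_Y)$. Under this identification the composition ring structure on $\mathrm{End}(\mathcal{O}_Y)$ matches the product on $\Gamma(\mathcal{O}_Y)$, which is why $\Gamma$ is a ring isomorphism. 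It therefore suffices to fix $s\in\Gamma(\mathcal{O}_X)$, set $\alpha=m_s$, and compare $\psi(s)$ along the bottom with $\Gamma(f^*(1)(m_s))$ along the top and right.

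Next I would compute the top-right composite. By the definition of $f^*(1)$ recorded just before the lemma, $f^*(1)(m_s)$ is the endomorphism of $\mathcal{O}_S$ obtained by conjugating $f^*(m_s)\colon f^*\mathcal{O}_X\to f^*\mathcal{O}_X$ with the structural isomorphism $\sigma_0\colon \mathcal{O}_S\xrightarrow{\sim} f^*\mathcal{O}_X$ of the tensor functor $f^*$. The key computation is that, under the canonical identification $f^*\mathcal{O}_X\cong\mathcal{O}_S$, one has $f^*(m_s)=m_{f^{\sharp}(s)}$, where $f^{\sharp}\colon \Gamma(\mathcal{O}_X)\to\Gamma(\mathcal{O}_S)$ is the homomorphism induced by $f$ on global sections. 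This is essentially the definition of pullback on global sections: the section $s$, viewed as the morphism $\mathcal{O}_X\to\mathcal{O}_X$ sending $1\mapsto s$, is carried by $f^*$ to the morphism representing $f^{\sharp}(s)$. Hence $\Gamma(f^*(1)(m_s))=f^{\sharp}(s)$.

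Finally, the essential point is that $f^{\sharp}=\psi$. Because $X$ is affine, the adjunction between $\mathrm{Spec}$ and the global-sections functor identifies morphisms $S\to X$ with ring homomorphisms $\Gamma(\mathcal{O}_X)\to\Gamma(\mathcal{O}_S)$, and by hypothesis $f$ is the morphism induced by $\psi$; thus the homomorphism it induces on global sections is exactly $\psi$. Combining this with the previous step gives $\Gamma(f^*(1)(m_s))=f^{\sharp}(s)=\psi(s)=\psi(\Gamma(m_s))$, which is precisely the commutativity of the square. The only genuine work is the bookkeeping of the two natural identifications---$\sigma_0$ and $\Gamma$---and verifying $f^*(m_s)=m_{f^{\sharp}(s)}$; I expect the main conceptual hinge to be the identity $f^{\sharp}=\psi$, where affineness of $X$ is used in an essential way, since for a general target $X$ the morphism $f$ would not be determined by its action on global sections and the square could fail to commute.
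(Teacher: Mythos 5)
Your proof is correct. The paper actually gives no proof of Lemma \ref{lem:ring} at all (it is treated as a routine verification), and your argument supplies exactly the standard one: identify $\mathrm{End}(\mathcal O_Y)\cong\Gamma(\mathcal O_Y)$ via $\alpha\mapsto\alpha(1)$, check that $f^*(m_s)=m_{f^{\sharp}(s)}$ under the canonical identification $f^*\mathcal O_X\cong\mathcal O_S$, and use the affineness of $X$ (via the $\mathrm{Spec}$--$\Gamma$ adjunction) to conclude $f^{\sharp}=\psi$.
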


\begin{lem}\label{lem:right}  Let $X$ be an affine scheme, and let $S$ be a scheme.  Let $F,G: {\qcoh{X}}\rightarrow {\qcoh{S}}$ be two tensor functors.  If $F(1)=G(1)$ then $F\cong G$ via a symmetric monoidal natural isomorphism.  \end{lem}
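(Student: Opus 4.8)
The plan is to exploit the affine structure: writing $X=\mathrm{Spec}(R)$ with $R=\Gamma(\mathcal O_X)=\mathrm{End}(\mathcal O_X)$, the category $\qcoh{X}$ is the category of $R$-modules, every object of which is the cokernel of a map between free modules $\mathcal O_X^{(J)}$, and every map between free modules is a matrix with entries in $R=\mathrm{End}(\mathcal O_X)$. Since a tensor functor preserves colimits, it is right exact and preserves direct sums; hence $F$ (and likewise $G$) is determined up to natural isomorphism by its restriction to the free modules, and that restriction is in turn controlled entirely by the image of the unit together with the induced action on $\mathrm{End}(\mathcal O_X)$, that is, by $F(1)$. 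Matching $F(1)=G(1)$ should therefore force $F\cong G$.

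First I would construct the isomorphism on free objects. Using the unit isomorphisms $\sigma_0:\mathcal O_S\to F(\mathcal O_X)$ and $\tau_0:\mathcal O_S\to G(\mathcal O_X)$ together with preservation of direct sums, I obtain canonical isomorphisms $F(\mathcal O_X^{(J)})\cong\mathcal O_S^{(J)}\cong G(\mathcal O_X^{(J)})$ for every index set $J$, and I define $\eta_{\mathcal O_X^{(J)}}$ to be the composite. The key computation is naturality on free objects: a morphism $\phi:\mathcal O_X^{(I)}\to\mathcal O_X^{(J)}$ is a matrix $(\phi_{ji})$ with $\phi_{ji}\in\mathrm{End}(\mathcal O_X)$, and under the identifications above $F(\phi)$ becomes the matrix with entries $F(1)(\phi_{ji})$ while $G(\phi)$ becomes the matrix with entries $G(1)(\phi_{ji})$. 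Since $F(1)=G(1)$ these two matrices coincide, so $\eta$ is a natural isomorphism between the restrictions of $F$ and $G$ to the full subcategory of free modules.

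Next I would extend $\eta$ to all of $\qcoh{X}$ by right exactness. Given $M$ with a free presentation $\mathcal O_X^{(I)}\xrightarrow{\phi}\mathcal O_X^{(J)}\to M\to 0$, applying the right exact functors $F$ and $G$ yields presentations of $F(M)$ and $G(M)$, and the naturality square for $\eta$ on the two free terms induces a unique isomorphism $\eta_M:F(M)\to G(M)$ on cokernels. Independence of the chosen presentation, and naturality of $\eta$ with respect to an arbitrary morphism $h:M\to N$, follow from the standard argument: free modules are projective, so $h$ lifts to a morphism of presentations, and one then chases the resulting diagram using the naturality on free modules established above.

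Finally I would verify that $\eta$ is \emph{symmetric monoidal}. Since $\mathcal O_X^{(I)}\otimes\mathcal O_X^{(J)}\cong\mathcal O_X^{(I\times J)}$ and the monoidal constraints of $F$ and $G$ on free modules are assembled from $\sigma_0$, $\tau_0$ and additivity, the monoidality square for $\eta$ commutes on free objects by direct inspection; it then propagates to all objects because $\otimes_X$ and $\otimes_S$ are right exact in each variable and free presentations tensor to free presentations. The unit and symmetry coherence conditions already hold for both $F$ and $G$, and $\eta$ intertwines them, so they transport across $\eta$. The main obstacle is precisely this extension step: checking that the cokernel-induced $\eta_M$ is well defined independently of the presentation and genuinely natural (and monoidal) for arbitrary morphisms. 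This is the incarnation of the principle that a colimit-preserving right exact functor on a module category is determined by its restriction to a generating family, and it is where all the diagrammatic bookkeeping resides.
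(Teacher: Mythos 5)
Your proposal is correct and follows essentially the same route as the paper: construct the isomorphism on free modules from $\sigma_0$, $\tau_0$ and preservation of direct sums, note that $F(1)=G(1)$ makes the matrices of $F(\phi)$ and $G(\phi)$ agree, extend to all objects via right exactness and free presentations, and reduce the monoidality check to free modules and ultimately to the unit coherence for $\phi_{\mathcal O_X}=\tau_0\circ\sigma_0^{-1}$. The paper carries out the monoidality reduction in more diagrammatic detail (down to the case $q=t=1$ via Mac Lane's coherence diagram), but the strategy is the same.
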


\begin{proof}  Let $a\in\qcoh{X}$, then it admits a (possibly infinite) presentation \[\mathcal O_X^p\stackrel{m}{\longrightarrow} \mathcal O_X^q\longrightarrow a\longrightarrow 0,\]where $m$ is a matrix with entries in $\mathrm{End}(\mathcal O_X)$.  Applying the right exact functors $F$ and $G$ we get exact sequences

\[\xymatrix{  F(\mathcal O_X^p) \ar[rr]^-{F(1)(m)} \ar[d]_-\cong && F(\mathcal O_X^q) \ar[r]\ar[d]_\cong & F(a)\ar[r] & 0 \\ G(\mathcal O_X^p) \ar[rr]_-{G(1)(m)} && G(\mathcal O_X^q) \ar[r] & G(a) \ar[r] & 0,  }\]

where $F(1)(m)$ and $G(1)(m)$ are respectively the matrix with $F(1)$ and $G(1)$ applied to the entries of $m$, and the vertical isomorphisms are given by \[F(\mathcal O_X)\stackrel{\sigma_0^{-1}}{\longrightarrow} \mathcal O_S \stackrel{\tau_0}{\longrightarrow}  G(\mathcal O_X).\]  By the assumption $F(1)=G(1)$ the square in this diagram commutes, and so we have an isomorphism $\phi_{a}:F(a)\rightarrow G(a)$.

It is straightforward to verify that this is independent of the choice of the presentation of $a$ and moreover gives a natural isomorphism $\phi:F\rightarrow G$.  It remains to show that it is a \emph{symmetric monoidal} natural transformation.  We show that it is monoidal, leaving the symmetry to the reader.

So let $b\in\qcoh{X}$.  Consider the following diagram:

\[\xymatrix{
& F(b^q)\ar[ddl]_-{\phi_{b^q}} \ar[rr] \ar[d]^-{\sigma_{\mathcal O_X^q,b}}&& F(a\otimes b) \ar[ddl]_-(.2){\phi_{a\otimes b}}\ar[d]^-{\sigma_{a,b}}
\\ & F(b)^q \ar[rr]|(.73)\hole \ar[ddl]|\hole^-(.15){\phi_{\mathcal O_X^q}\otimes\phi_b} & &F(a)\otimes F(b) \ar[ddl]^-(.3){\phi_a\otimes\phi_b}
\\ G(b^q) \ar[rr] \ar[d]_-{\tau_{\mathcal O_X^q,b}}  && G(a\otimes b) \ar[d]_-{\tau_{a,b}}
\\ G(b)^q \ar[rr] & &G(a)\otimes G(b).   }\]

(Here $b^q$ stands for $\mathcal O_X^q\otimes b$, $F(b)^q$ stands for $\mathcal O_S^q\otimes F(b)$, etc.)

The commutativity of the right side is the condition we need to verify.

The top side is obtained by first applying $-\otimes b$ to the surjection $\mathcal O_X^q\rightarrow a$, and then the functors $F$ and $G$; the bottom side is obtained by applying these functor in the other order.  These two sides are commutative since $\phi$ is a natural transformation.

The front side and the back side are commutative by the compatibility conditions on the isomorphisms $\sigma$ and $\tau$.  Therefore we are reduced to prove the commutativity of the left side.

By considering a presentation $\mathcal O_X^s\rightarrow \mathcal O_X^t\rightarrow b\rightarrow 0$ of $b$, this is in turn reduced to the commutativity of the following diagram:

\[\xymatrix{  F(\mathcal O_X^t\otimes \mathcal O_X^q) \ar[rr]^-{\phi_{\mathcal O_X^t\otimes \mathcal O_X^q}} \ar[d]_-{\sigma_{\mathcal O_X^t,\mathcal O_X^q}}&&   G(\mathcal O_X^t\otimes \mathcal O_X^q) \ar[d]^-{\tau_{\mathcal O_X^t,\mathcal O_X^q}}
\\ F(\mathcal O_X^t)\otimes F(\mathcal O_X^q) \ar[rr]_-{\phi_{\mathcal O_X^t}\otimes \phi_{\mathcal O_X^q}} && G(\mathcal O_X^t)\otimes G(\mathcal O_X^q).}\]

It suffices to show that this diagram is commutative in the special case $q=t=1$.  Using the left diagram of (4) in \cite[page~256]{maclane_categories} (applied to both $F$ and $G$) we are reduced to showing the commutativity of the following diagram:

\[\xymatrix{ F(\mathcal O_X)\otimes \mathcal O_S \ar[rr]^{\phi_{\mathcal O_X}\otimes \mathrm{Id}} \ar[d]_-{\mathrm{Id}\otimes \sigma_0} && G(\mathcal O_X)\otimes \mathcal O_S   \ar[d]^-{\mathrm{Id}\otimes \tau_0}
\\ F(\mathcal O_X)\otimes F(\mathcal O_X) \ar[rr]^{\phi_{\mathcal O_X}\otimes \phi_{\mathcal O_X}} && G(\mathcal O_X)\otimes G(\mathcal O_X).}\]

But by construction we have $\phi_{\mathcal O_X}=\tau_0\circ \sigma_0^{-1}$, and so we are done.  \end{proof}

\begin{proof}[Proof of Theorem \ref{thm:reconst}]  We define a morphism \[\alpha': \uline{\qcoh{X}}\longrightarrow \uline X\] by sending $(S,F)$ to $f: S\rightarrow X$ where $f=\mathrm{Spec}(F(1))$.

We first show that $\alpha'\circ\alpha\cong\mathrm{Id}$.  So suppose $f: S\rightarrow X$ is an affine scheme over $X$.  Let $\psi=f^\#_X:\Gamma(\mathcal O_X)\rightarrow \Gamma(\mathcal O_S)$, then \[\mathrm{Spec}(f^*(1))=\mathrm{Spec}(\psi)=f,\]where we identified the rings using $\Gamma$ in Lemma \ref{lem:ring}.

Conversely, we need to show that $\alpha\circ \alpha'\cong\mathrm{Id}$.  So let $(S,F)$ be an object in $\uline{\qcoh{X}}$ over $S$.  Let $\psi=\Gamma(F(1)):\Gamma(\mathcal O_X)\rightarrow \Gamma(\mathcal O_S)$ as in Lemma \ref{lem:ring}, then we have \[F(1)=\mathrm{Spec}(\psi)^*(1),\]  hence we conclude $F\cong \mathrm{Spec}(\psi)^*$ by Lemma \ref{lem:right}.  \end{proof}

\subsubsection{}\label{par:rmkcoh}  \emph{Remarks.}  Recall from \ref{par:sheaf} that there is a functor from $\qcoh{X}$ into the category $\Mod{\mathcal O_X}$ on the stack $\uline{\qcoh{X}}\cong X$.  It is easily seen to be isomorphic to the inclusion functor.

The proof above also shows that we have an isomorphism $X\rightarrow \uline{A}$ whenever $A\subset \qcoh{X}$ is an abelian tensor subcategory; any such $A$ must contain the unit object $\mathcal O_X$ and hence every finitely presented objects in $\qcoh{X}$.  For example, when $X$ is a noetherian affine scheme, we may take $A$ to be $\coh{X}$.  (Here the noetherian assumption is used to guarantee that the tensor product of two coherent sheaves is still coherent.)

\subsubsection{}\label{par:ex}  \emph{Example.}  Here we show that it is crucial to use the sheaf tensor product in the proof of Theorem \ref{thm:reconst}.

Let $k$ be a field of characteristic not equal to $2$, and let $X=\mathrm{Spec}(k[t]/(t^2-1))$ be the affine scheme of two reduced points.  Let $A$ be the abelian category $\coh{X}$ on which we have the sheaf tensor product $\otimes_X$, then we have \[\uline X\cong \uline{(A,\otimes_X)}\] as stacks, as remarked in \ref{par:rmkcoh}.

There is, however, a different tensor product on the category $A$ by identifying the ring $k[t]/(t^2-1)$ with the group algebra $kG$, where $G=\{1,t\}$ is the cyclic group of order two.  This realizes $A$ as the category of finite dimensional $G$-representations; denote the representation tensor product on $A$ by $\otimes_G$.  Notice that this is indeed a different tensor product from $\otimes_X$ since they have different unit objects.

Every object $V$ in $A$ then decomposes as $V_+\oplus V_-$ where $V_+$ is the trivial representation and $t$ acts as the multiplication by $-1$ on $V_-$.

Denote by $k$ the one dimensional trivial $G$-representation, and let $M$ be the one dimensional $G$-representation on which $t$ acts as the multiplication by $-1$.  Then any tensor functor $F$ from $(A,\otimes_G)$ into an abelian tensor category $B$ is determined by the object $F(M)$, which must satisfy \[F(M)\otimes F(M)\cong F(k)\cong u_B.\]

And conversely, any object in $B$ whose tensor square is isomorphic to $u_B$ gives rise to such a functor.  Therefore we conclude that, if $S$ is an affine scheme, for instance, then the fibre category $\uline{(A,\otimes_G)}{}_S$ is isomorphic to the group of order two elements in $\mathrm{Pic}(S)$, namely, \[\uline{(A,\otimes_G)}{}_S\cong \mathrm{Pic}(S)[2].\]

In particular we have \[\uline{(A,\otimes_X)}\not\cong \uline{(A,\otimes_G)}.\]
See also Section \ref{sec:group} for a class of examples which illustrates the dependence on tensor structures.

\subsection{Reconstruction of schemes}

\subsubsection{}\label{par:gen}  Now we generalize Theorem \ref{thm:reconst} to more general schemes.

\begin{thm}\label{thm:reconst2}  Let $X$ be a quasi-compact and separated scheme, then the comparison morphism $\alpha: \uline{X}\longrightarrow \uline{\qcoh{X}}$ is an isomorphism.\end{thm}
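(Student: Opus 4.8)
The plan is to reduce the quasi-compact separated case to the affine case already established in Theorem \ref{thm:reconst}, using the fact that both $\uline{X}$ and $\uline{\qcoh{X}}$ are stacks (by Theorem \ref{thm:stack}, applied with $\B\to\mathfrak S$ being the fibred category of quasi-coherent sheaves on affine schemes from Example \ref{par:exqcoh}, where fpqc descent guarantees that the functors $\delta$ are equivalences). Since $\alpha$ is a morphism of stacks over $\mathfrak S$, to prove it is an isomorphism it suffices to produce a Zariski (or \'etale) covering of $X$ by affine opens and check that $\alpha$ restricts to an isomorphism over each piece, then verify compatibility on overlaps — the stack condition then glues these local isomorphisms into a global one.

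First I would choose a finite affine open cover $X=\bigcup_{i} U_i$ (finite by quasi-compactness) with $U_i=\mathrm{Spec}(R_i)$. The separatedness hypothesis is what makes the intersections $U_i\cap U_j$ affine as well, which is crucial: on each affine open and each affine intersection Theorem \ref{thm:reconst} gives $\uline{U_i}\cong \uline{\qcoh{U_i}}$ and $\uline{U_i\cap U_j}\cong\uline{\qcoh{U_i\cap U_j}}$. The key geometric input is that the open immersions $U_i\hookrightarrow X$ and the restriction functors $\qcoh{X}\to\qcoh{U_i}$ are compatible with $\alpha$, so that the local reconstruction isomorphisms assemble into descent data.

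The main technical content, and the step I expect to be the principal obstacle, is translating a tensor functor $F:\qcoh{X}\to\qcoh{S}$ defined on the \emph{global} category of quasi-coherent sheaves into compatible tensor functors on the categories $\qcoh{U_i}$, together with glueing isomorphisms over the $\qcoh{U_i\cap U_j}$. Concretely, given $(S,F)\in\uline{\qcoh{X}}_S$, I would restrict along the tensor functors $j_i^*:\qcoh{X}\to\qcoh{U_i}$ induced by the open immersions to obtain local objects, invoke Theorem \ref{thm:reconst} to get scheme morphisms $f_i:S_i\to U_i$ on an appropriate cover $\{S_i\to S\}$, and then use that $u_S$ is the unit and that $F$ preserves colimits (so in particular commutes with the localizations presenting the $R_i$) to check that the $f_i$ agree on overlaps. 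The delicate point is that a quasi-coherent sheaf on $X$ is recovered from its restrictions via an equalizer $\mathcal G\to\prod_i j_{i*}\mathcal G|_{U_i}\rightrightarrows\prod_{i,j}j_{ij*}\mathcal G|_{U_{ij}}$, and one must verify that the tensor functor $F$ is compatible with this glueing — i.e. that $F$ is determined, as a symmetric monoidal colimit-preserving functor, by its restrictions to the $\qcoh{U_i}$ and the comparison isomorphisms on the $\qcoh{U_{ij}}$. This is precisely where separatedness (affineness of $U_{ij}$) and the stack property of $\uline{(-)}$ from Theorem \ref{thm:stack} combine.

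Once the local-to-global dictionary is in place, the conclusion is formal: the quasi-inverse $\alpha'$ is built by sending $(S,F)$ to the scheme morphism $S\to X$ glued from the affine-local morphisms $\mathrm{Spec}(F_i(1))$ furnished by the proof of Theorem \ref{thm:reconst}, and the two composites $\alpha'\circ\alpha$ and $\alpha\circ\alpha'$ are shown to be isomorphic to the identity by checking on the affine cover, where the affine case applies verbatim. The verification that these glueings respect the symmetric monoidal structure follows the same faithfulness-and-fullness pattern as in the proof of Theorem \ref{thm:stack}(ii), so I would only indicate it and refer back rather than reproduce the diagram chases.
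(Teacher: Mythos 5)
Your overall architecture matches the paper's: construct a quasi-inverse $\alpha'$ by reducing to the affine case of Theorem \ref{thm:reconst}, use separatedness to make pairwise intersections affine, and glue. But the step you flag as ``the principal obstacle'' is where the entire content of the proof lives, and your sketch does not contain the idea that resolves it. The difficulty is that a tensor functor $F:\qcoh{X}\rightarrow\qcoh{S}$ comes with no preferred open cover of $S$, and $F$ cannot be ``restricted along'' $j_i^*:\qcoh{X}\rightarrow\qcoh{U_i}$ --- that functor points out of $\qcoh{X}$ in the same direction as $F$, so no composition typechecks; to get a functor out of $\qcoh{U_i}$ one must show $F$ descends through the Gabriel quotient $\qcoh{X}/M_i\cong\qcoh{U_i}$, which requires knowing that $F$ kills $M_i=\ker(-|_{U_i})$ after restricting the target to a suitable open $V_i\subset S$. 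What the paper actually does is: choose finitely presented sheaves $a_i$ with $U_i=X-\mathrm{supp}(a_i)$ affine and $\bigcap_i\mathrm{supp}(a_i)=\emptyset$, set $V_i:=S-\mathrm{supp}(F(a_i))$, and prove (Lemma \ref{lem:cover}) that the $V_i$ cover $S$. Both that covering statement and the descent of $F$ to $\qcoh{U_i}\rightarrow\qcoh{V_i}$ rest on Lemma \ref{lem:point}: every tensor functor $\qcoh{X}\rightarrow\mathrm{Vect}_K$ is isomorphic to $x^*$ for a $K$-point $x$. That lemma in turn needs a genuinely non-formal argument: if no affine open $U$ satisfied $\ker(-|_U)\subset\ker(G)$, one picks finite-type sheaves $a_x$ with $x^*(a_x)=0$ and $G(a_x)\neq 0$, extracts a finite subcover by quasi-compactness, and derives a contradiction from $\bigotimes_i a_{x_i}\cong 0$ while $G\bigl(\bigotimes_i a_{x_i}\bigr)\neq 0$. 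None of this is recoverable from ``$F$ preserves colimits and commutes with the localizations'': colimit-preservation does not imply that $F$ respects supports, which is the property everything hinges on.

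A second, smaller inaccuracy: you propose to check that $\alpha$ is an isomorphism ``over each piece'' of an affine cover of $X$ and let the stack condition glue. The stack condition is over the site $\mathfrak S$ of test schemes $S$, not over $X$; localizing on $X$ inside the target of $\alpha$ would require knowing that $\uline{\qcoh{U_i}}\rightarrow\uline{\qcoh{X}}$ is an open immersion, which is precisely what is not known in advance (the paper makes this point explicitly in \ref{par:gen}). The glueing that does occur is over $S$, along the cover $\{V_i\}$ manufactured from $F$, with compatibility on overlaps coming from affineness of $U_{ij}$ and the commuting diagram of Gabriel quotients --- not from a formal descent of isomorphisms along a cover of $X$.
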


The point is to construct a quasi-inverse \[\alpha': \uline{\qcoh{X}}\longrightarrow \uline{X},\]which means that for every affine scheme $S$ and a tensor functor $F:\qcoh{X}\rightarrow \qcoh{S}$, we need to define a scheme morphism $f:S\rightarrow X$.

We must do this locally on $S$ and $X$:  The idea is to glue open affine subschemes, but \emph{a priori} we do now even know if the stack $\uline{\qcoh{X}}$ is representable, and so, for example, if $U\subset X$ is an open affine subscheme, then it is not clear why the induced morphism \[U\cong \uline{\qcoh{U}}\longrightarrow \uline{\qcoh{X}}\] should be an \emph{open immersion}.

So let $a_1, a_2, \ldots$ be finitely presented quasi-coherent sheaves on $X$.  Then in particular each $a_i$ has a closed support.  Suppose that the intersection $\bigcap_i\mathrm{supp}(a_i)$ is empty on $X$.  Then the complements of $\mathrm{supp}(a_i)$ form an open covering of $X$.  Each $F(a_i)$ is finitely presented on $S$, and in particular each $\mathrm{supp}(F(a_i))$ is closed.

\begin{lem}\label{lem:cover}  With notations and assumptions as above, we have $\bigcap_i\mathrm{supp}(F(a_i))=\emptyset$ on $S$.  In particular the complements of $\mathrm{supp}(F(a_i))$ form an open covering of $S$.
\end{lem}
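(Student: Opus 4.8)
The plan is to translate the hypothesis ``$\bigcap_i\mathrm{supp}(a_i)=\emptyset$'' into the vanishing of a single tensor product, push that vanishing through $F$ (which, being a tensor functor, is symmetric strong monoidal and preserves the relevant structure), and then translate back on $S$. First I would invoke the quasi-compactness of $X$: the complements $X\setminus\mathrm{supp}(a_i)$ form an open cover, so finitely many suffice, and after relabelling we may assume there are finitely many sheaves $a_1,\ldots,a_n$ with $\bigcap_{i=1}^n\mathrm{supp}(a_i)=\emptyset$. Since $\bigcap_i\mathrm{supp}(F(a_i))\subseteq\bigcap_{i=1}^n\mathrm{supp}(F(a_i))$, it is enough to treat this finite collection.

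Next I would form the single finitely presented sheaf $a=a_1\otimes\cdots\otimes a_n$ on $X$. The key local input is the formula $\mathrm{supp}(b\otimes c)=\mathrm{supp}(b)\cap\mathrm{supp}(c)$ for finitely generated sheaves, checked stalkwise: over a local ring the tensor product of two finitely generated modules vanishes if and only if one of the factors does, by Nakayama. Applying this $(n-1)$ times gives $\mathrm{supp}(a)=\bigcap_{i=1}^n\mathrm{supp}(a_i)=\emptyset$, and a finitely generated sheaf with empty support is zero, so $a=0$ in $\qcoh{X}$.

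Now I would push this through $F$. Because $F$ is symmetric strong monoidal, $F(a)\cong F(a_1)\otimes_S\cdots\otimes_S F(a_n)$, and because $F$ preserves colimits it sends the zero object to the zero object, so $F(a_1)\otimes_S\cdots\otimes_S F(a_n)\cong F(0)=0$ in $\qcoh{S}$. Each $F(a_i)$ is finitely presented on $S$: applying the right exact monoidal functor $F$ to a finite presentation $\mathcal O_X^{p}\to\mathcal O_X^{q}\to a_i\to 0$ and using $F(\mathcal O_X)\cong\mathcal O_S$ produces a finite presentation of $F(a_i)$. Hence the support-of-tensor-product formula applies once more, this time on $S$, and yields $\bigcap_{i=1}^n\mathrm{supp}(F(a_i))=\mathrm{supp}(0)=\emptyset$, which is the assertion.

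The only genuine content, and the step I would be careful about, is the local identity $\mathrm{supp}(b\otimes c)=\mathrm{supp}(b)\cap\mathrm{supp}(c)$: this is exactly where finite generation is essential and where the Nakayama argument enters. Everything else is a formal consequence of $F$ being a colimit-preserving symmetric monoidal functor together with the quasi-compactness of $X$; note that separatedness of $X$ is not needed for this particular lemma.
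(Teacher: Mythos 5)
Your proof is correct, but it takes a genuinely different route from the paper. The paper argues by contradiction: a point $s$ in $\bigcap_i\mathrm{supp}(F(a_i))$ gives a tensor functor $s^*\circ F:\qcoh{X}\to\mathrm{Vect}_{k(s)}$, which by Lemma \ref{lem:point} is isomorphic to $x^*$ for some point $x\in X$; that $x$ then lies in every $\mathrm{supp}(a_i)$, contradicting the hypothesis. So the paper's proof leans on the harder ``every fibre functor is a point'' result (whose proof in turn uses separatedness of $X$ to control pushforward from open affines). You instead reduce to finitely many $a_i$ by quasi-compactness, encode the empty intersection as the vanishing of $a_1\otimes\cdots\otimes a_n$ via the stalkwise Nakayama identity $\mathrm{supp}(b\otimes c)=\mathrm{supp}(b)\cap\mathrm{supp}(c)$ for finitely generated sheaves, push the vanishing through the strong monoidal, colimit-preserving $F$, and apply the same identity on $S$ to the finitely presented sheaves $F(a_i)$. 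Each step checks out: the reduction to a finite subfamily is legitimate since $\bigcap_i\mathrm{supp}(F(a_i))$ only shrinks, and the finite presentation of $F(a_i)$ follows from right exactness of $F$ together with $F(\mathcal O_X)\cong\mathcal O_S$, exactly as the paper asserts just before the lemma. What your approach buys is a more elementary and self-contained argument that decouples this lemma from Lemma \ref{lem:point} and, as you note, from the separatedness hypothesis; what the paper's approach buys is uniformity (a single point of $X$ witnessing membership in all the supports at once, with no finiteness reduction) and a reuse of machinery that is needed anyway for the rest of Theorem \ref{thm:reconst2}.
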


\begin{proof}  Suppose on the contrary that the intersection is non-empty on $S$.  Then there is a point $s\in S$ such that $s^*F(a_i)$ is non-zero in $\mathrm{Vect}_{k(s)}$ for every $i$.  Note that $s^*\circ F: \qcoh{X}\rightarrow \mathrm{Vect}_{k(s)}$ is a tensor functor, and so by Lemma \ref{lem:point} below, there is a point $x\in X$ such that $x^*(a_i)$ is non-zero for every $i$.  This means that the point $x$ lies in the intersection of $\mathrm{supp}(a_i)$, a contradiction.  \end{proof}

\begin{lem}\label{lem:point}  Let Let $X$ be a quasi-compact and separated scheme, and let $K$ be a field.  If $G:\qcoh{X}\rightarrow \mathrm{Vect}_K$ is a tensor functor, then there is a $K$-point $x$ on $X$ such that $G$ is isomorphic to $x^*$ as tensor functors.  \end{lem}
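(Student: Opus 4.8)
The plan is to reduce the statement to the affine reconstruction already established in Theorem \ref{thm:reconst}, by showing that the tensor functor $G$ is ``concentrated'' on a single affine open of $X$. Since $X$ is quasi-compact I would first fix a finite cover by affine opens $U_1,\dots,U_n$, and exploit that $G$, being a tensor functor, detects supports multiplicatively: for finitely generated quasi-coherent $a,b$ one has $\mathrm{supp}(a\otimes b)=\mathrm{supp}(a)\cap\mathrm{supp}(b)$ (Nakayama over the local rings) while $G(a\otimes b)\cong G(a)\otimes_K G(b)$. Let $\mathcal A$ denote the set of finitely generated sheaves $a$ with $G(a)\neq 0$. I claim $W:=\bigcap_{a\in\mathcal A}\mathrm{supp}(a)\neq\emptyset$: otherwise the open sets $X\setminus\mathrm{supp}(a)$ would cover $X$, so by quasi-compactness finitely many $a_1,\dots,a_m\in\mathcal A$ satisfy $\bigcap_i\mathrm{supp}(a_i)=\emptyset$; then $a_1\otimes\cdots\otimes a_m$ has empty support, hence is $0$, yet $G(a_1\otimes\cdots\otimes a_m)\cong G(a_1)\otimes_K\cdots\otimes_K G(a_m)\neq0$, a contradiction. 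This is exactly the mechanism behind Lemma \ref{lem:cover}. I would then pick a point $x_0\in W$ and an affine open $U=\mathrm{Spec}\,B$ of the cover with $x_0\in U$, and set $Z=X\setminus U$.

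The choice of $x_0$ immediately controls what $G$ kills: if a finitely generated sheaf $c$ is supported in $Z$ then $x_0\notin\mathrm{supp}(c)$, so $c\notin\mathcal A$, i.e. $G(c)=0$. Since every quasi-coherent sheaf supported on $Z$ is the filtered union of its finitely generated subsheaves (each again supported on $Z$) and $G$ preserves colimits, it follows that $G$ annihilates the whole subcategory $\mathrm{QCoh}_Z(X)$ of sheaves vanishing on $U$.

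Next I would descend $G$ to $U$. Restriction $j^*:\qcoh{X}\to\qcoh{U}$ along $j:U\hookrightarrow X$ identifies $\qcoh{U}$ with the Serre quotient $\qcoh{X}/\mathrm{QCoh}_Z(X)$ (valid since $X$ is quasi-compact and separated, so $j$ is an affine quasi-compact immersion), and the aim is to promote $G$ to a tensor functor $G':\qcoh{U}\to\mathrm{Vect}_K$ with $G\cong G'\circ j^*$. Because $G$ is right exact, colimit-preserving and kills $\mathrm{QCoh}_Z(X)$, the only missing ingredient is that $G$ inverts the $j^*$-local isomorphisms; concretely, that $G$ sends every monomorphism $\iota:\mathcal M\hookrightarrow\mathcal N$ whose cokernel is supported on $Z$ to a monomorphism (the dual case, epimorphisms with kernel on $Z$, is immediate from right exactness together with the previous paragraph). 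This is the crux, and I expect it to be the main obstacle: it is the analogue of the vanishing of $\mathrm{Tor}_1$ at $x_0$ that would be automatic if $G$ were already known to be $x_0^*$, but which here must be extracted from the hypotheses rather than assumed. The hard part is precisely that $G$ need not be left exact (even $x_0^*$ is not), so injectivity cannot come for free; I plan to obtain it by reducing via colimits to a finitely generated cokernel $\mathcal C$ and then using separatedness — so that $j$ is affine and $j_*$ is exact — in combination with the support argument applied to $\mathcal C$ and the auxiliary kernels, all of which are supported on $Z$ and hence invisible to $G$ near $x_0$.

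Granting this descent, the conclusion is formal. The functor $G'$ is a tensor functor on $\qcoh{U}$ with $U$ affine, so Theorem \ref{thm:reconst} — equivalently Lemma \ref{lem:right}, which shows that a tensor functor out of the quasi-coherent sheaves on an affine scheme is determined up to symmetric monoidal isomorphism by its effect on the endomorphism ring of the unit — produces a unique $K$-point $x':\mathrm{Spec}\,K\to U$ with $G'\cong(x')^*$. Composing with the open immersion gives a $K$-point $x:\mathrm{Spec}\,K\to X$, and then $G\cong G'\circ j^*\cong(x')^*\circ j^*=x^*$ as tensor functors, which is the desired statement.
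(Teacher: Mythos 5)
Your proposal follows the paper's proof of Lemma \ref{lem:point} essentially step for step: the paper locates the affine open by the same compactness-of-supports argument (organized there as a contradiction over points $x\in X$, choosing for each $x$ a finite-type $a_x\in\ker(x^*)\setminus\ker(G)$ and tensoring a finite subfamily, rather than intersecting the supports of all members of your $\mathcal A$ at once --- but the mechanism, namely that a finite tensor product with empty support is zero while $G$ of it is a nonzero $K$-vector space, is identical), then observes that $G$ kills $M=\ker(-|_U)$, identifies $\qcoh{U}$ with the Gabriel quotient $\qcoh{X}/M$ using that $j_*$ preserves quasi-coherence for the quasi-compact separated immersion $j\colon U\hookrightarrow X$, descends $G$ to a tensor functor on $\qcoh{U}$, and finishes with Lemma \ref{lem:right}. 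So there is no divergence of strategy to report.

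The one place your write-up stops short is the descent step, and you are right that it is the delicate point: the universal property of the Serre quotient applies to \emph{exact} functors killing $M$, whereas $G$ is only right exact, so one must check by hand that $G$ inverts the unit $a\to j_*j^*a$, equivalently that $G$ sends a monomorphism $\iota\colon\mathcal M\hookrightarrow\mathcal N$ with cokernel in $M$ to a monomorphism. (The paper simply draws the dashed arrow $\bar G$ and cites Gabriel; it does not address this either.) Your proposed mechanism --- that the cokernel and the auxiliary kernels are supported on $Z$ and hence killed by $G$ --- only reproduces, via right exactness, the surjectivity of $G(\iota)$; it cannot yield injectivity. What does close the gap is the tensor structure: since $X$ is quasi-compact and separated and $U$ is a quasi-compact open, one has $Z=V(\mathcal I)$ for a finite-type quasi-coherent ideal $\mathcal I$; then $G(\mathcal O_X/\mathcal I)=0$ forces $G(\mathcal I)\to G(\mathcal O_X)=K$ to be onto, hence so is $\epsilon\colon G(\mathcal I^{\otimes n})\cong G(\mathcal I)^{\otimes n}\to K$. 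If $\mathcal C=\mathcal N/\mathcal M$ is of finite type then $\mathcal I^n\mathcal C=0$ for some $n$, so the multiplication $\mathcal I^{\otimes n}\otimes\mathcal N\to\mathcal N$ factors through $\mathcal M$; applying $G$ and contracting with any $s\in G(\mathcal I^{\otimes n})$ satisfying $\epsilon(s)=1$ gives a map $G(\mathcal N)\to G(\mathcal M)$ which is a left inverse of $G(\iota)$, so $G(\iota)$ is split injective. The general case follows by writing $\iota$ as a filtered colimit of monomorphisms with finite-type cokernel supported on $Z$ and using that $G$ preserves filtered colimits and that filtered colimits are exact in $\mathrm{Vect}_K$. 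With that inserted, your argument is complete and agrees with the paper's.
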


\begin{proof}  We first reduce to the affine case.  Let $U$ be an open affine subscheme of $X$, and let $M\subset \qcoh{X}$ be the kernel of the restriction functor $-|_U:\qcoh{X}\rightarrow \qcoh{U}$.  That is, $M$ is the full subcategory consisting of objects $b\in \qcoh{X}$ such that $b|_U\cong 0$; then we have \[M=\bigcap_{u\in U}\ker(u^*).\]

We claim that there is an open affine subscheme $U$ of $X$ such that $M$ is contained in $\ker(G)$.  Suppose this were not the case.  Then for every $x\in X$ there is an object $a_x\in\ker(x^*)$ not in $\ker(G)$.  We may choose $a_x$ to be of finite type:  Indeed, $a_x$ is a quasi-coherent sheaf and hence a direct limit of finite type subsheaves.  All of these finite type subsheaves must be in $\ker(x^*)$ in order to have $x^*(a_x)=0$, and at least one of these finite type subsheaves is not in $\ker(G)$ since otherwise we would have $G(a_x)\cong 0$, since $G$ commutes with direct limits.

So for every $x\in X$ we choose and fix a sheaf $a_x$ of finite type satisfying $a_x\in \ker(x^*)$ and $a_x\notin\ker(G)$.  Since $a_x$ is of finite type, its support $\mathrm{supp}(a_x)$ is closed in $X$; let $U_x=X-\mathrm{supp}(a_x)$.  Then $\{U_x\,|\, x\in X\}$ form an open covering of $X$.  Since $X$ is quasi-compact we have a finite sub-covering $U_{x_1}, U_{x_2}, \ldots, U_{x_n}$.  Let \[a:=\bigotimes_{i=1}^n a_{x_i}.\]  Then we have $x^*(a)=\bigotimes x^*(a_{x_i})=0$ since $x$ lies in one of $U_{x_i}$, for every $x\in X$.  This means that we must have $a\cong 0$ in $\qcoh{X}$.  But on the other hand, we have $G(a)\neq 0$ since every $G(a_{x_i})\neq 0$ in $\mathrm{Vect}_K$, a contradiction.

Therefore there is an open affine subscheme $U$ of $X$ such that $M=\ker(-|_U)$ is contained in $\ker(G)$.  Then we have a diagram \[\xymatrix{ \qcoh{X} \ar[d] \ar[r]^-G & \mathrm{Vect}_K\\ \qcoh{X}/M \ar@{-->}[ur]_-{\bar G}\ar[d]^-\cong & \\ \qcoh{U},  }\] where $\qcoh{X}/M$ is the localization \cite{gabriel}.  The lower vertical arrow is an equivalence by \cite[Chapter 3, proposition 5]{gabriel}.

More precisely, the equivalence follows from the fact that the push-forward functor induced by the open immersion $U\rightarrow X$ sends quasi-coherent sheaves on the \emph{affine} scheme $U$ to quasi-coherent sheaves on $X$, and this functor is a section functor of the restriction functor.  The fact that quasi-coherent sheaves are preserved under push-forward follows from our assumptions on the scheme $X$:  Indeed, we need the open immersion to be quasi-compact and separated \cite[Chapter 2, Proposition 5.8]{hartshorne}.  The separatedness follows from the fact that affine schemes are separated and \cite[Chapter 2, Corollary 4.6]{hartshorne}; the open immersion $U\rightarrow X$ is quasi-compact since for any open affine subscheme $Y$ of $X$, the intersection $Y\cap U$ is affine, and hence quasi-compact.

We then have a tensor functor $H: \qcoh{U}\rightarrow \mathrm{Vect}_K$ where $U$ is an open affine subscheme of $X$.  Thus we are reduced to the affine case.

The functor $H$ induces a ring homomorphism \[H(1):\Gamma(U,\mathcal O_U)\cong\mathrm{End}_{\qcoh{U}}(\mathcal O_U)\longrightarrow \mathrm{End}_{\mathrm{Vect}_K}(K)\cong K.\]  This gives a point $u:\mathrm{Spec}(K)\rightarrow U$.  Lemma \ref{lem:right} with $F=H$ and $G=u^*$ shows $H\cong u^*$.  \end{proof}

\subsubsection{}  Suppose now we choose the objects $a_i\in \qcoh{X}$ with sufficiently large support so that each $U_i:=X-\mathrm{supp}(a_i)$ is an open \emph{affine} subscheme.  The complement $V_i$ of $\mathrm{supp}(F(a_i))$ in $S$ is open but not necessarily affine.  We need to define a scheme morphism $f_i: V_i\longrightarrow U_i$, which will glue to give $f: S\rightarrow X$.

Since $U_i$ is affine, it suffices to give a ring homomorphism \[f_i^\#: \Gamma(U_i,\mathcal O_{U_i})\longrightarrow \Gamma(V_i,\mathcal O_{V_i}).\]  This can be constructed at the categorical level using the functor $F$ and localization as follows.

Consider the restriction tensor functor $-|_{U_i}:\qcoh{X}\rightarrow \qcoh{U_i}$; denote by $M_i=\ker(-|_{U_i})$ the subcategory of objects in $\qcoh{X}$ consisting of those objects $b$ satisfying $b|_{U_i}\cong 0$ in $\qcoh{U_i}$; or in other words object $b$ with $\mathrm{supp}(b)\subset \mathrm{supp}(a_i)$ as sets.

\begin{lem}  If $b\in K$ then $F(b)|_{V_i}\cong 0$ in $\qcoh{V_i}$.  \end{lem}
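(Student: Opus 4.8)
The plan is to rephrase the desired vanishing as a statement about supports and then verify it pointwise using Lemma~\ref{lem:point}. Since $V_i=S-\mathrm{supp}(F(a_i))$, proving $F(b)|_{V_i}\cong 0$ is the same as proving $\mathrm{supp}(F(b))\cap V_i=\emptyset$, i.e.\ $\mathrm{supp}(F(b))\subseteq\mathrm{supp}(F(a_i))$. Recall also that $b\in M_i$ means precisely $\mathrm{supp}(b)\subseteq\mathrm{supp}(a_i)$, so the content is that $F$ does not enlarge this containment of supports.

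First I would fix a point $s\in V_i$ and consider the composite $s^*\circ F\colon\qcoh{X}\to\mathrm{Vect}_{k(s)}$, which is a tensor functor. By Lemma~\ref{lem:point} there is a $k(s)$-point $x\in X$ with $s^*\circ F\cong x^*$ as tensor functors. Because $F(a_i)$ is finitely presented, the condition $s\in V_i$ gives $s^*F(a_i)=0$, hence $x^*a_i\cong s^*F(a_i)=0$, so $x\notin\mathrm{supp}(a_i)$. Since $\mathrm{supp}(b)\subseteq\mathrm{supp}(a_i)$ we get $x\notin\mathrm{supp}(b)$, whence $x^*b=0$ and therefore $s^*F(b)\cong x^*b=0$. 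Thus every fibre of $F(b)$ over a point of $V_i$ vanishes.

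It then remains to upgrade this fibrewise vanishing to the vanishing of the restricted sheaf $F(b)|_{V_i}$, and this is the step I expect to be the main obstacle, because fibrewise vanishing does not by itself force an arbitrary quasi-coherent sheaf to vanish. When $b$ is finitely presented the gap closes at once: applying the right-exact, direct-sum-preserving functor $F$ to a finite presentation $\mathcal O_X^p\to\mathcal O_X^q\to b\to 0$ shows $F(b)$ is finitely presented, hence of finite type, so by Nakayama $\mathrm{supp}(F(b))=\{s:s^*F(b)\neq 0\}$ is closed and, by the computation above, disjoint from $V_i$. For a general $b\in M_i$ I would first reduce to the finite type case: the subcategory $M_i=\ker(-|_{U_i})$ is closed under subobjects (as $-|_{U_i}$ is exact), and both $F$ and $-|_{V_i}$ preserve filtered colimits, so writing $b$ as the filtered colimit of its finite type quasi-coherent subsheaves---each of which again lies in $M_i$---reduces the statement to $b$ of finite type.

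The genuinely delicate point is then passing from finite type to finitely presented: one needs $F(b)$ to be of finite type, which is automatic when $X$ is noetherian (finite type sheaves are coherent, hence finitely presented) but in general requires knowing that every finite type object of $M_i$ is a filtered colimit of finitely presented objects lying in $M_i$. It is exactly here that the quasi-compactness and separatedness of $X$ must be invoked, so I would isolate this finiteness input as a preliminary reduction and then conclude by the clean finitely presented case established above.
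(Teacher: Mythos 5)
Your core argument is exactly the paper's: the paper reduces the lemma to the claim that $\mathrm{supp}(c')\subset\mathrm{supp}(c)$ forces $\mathrm{supp}(F(c'))\subset\mathrm{supp}(F(c))$, and proves this pointwise by applying Lemma \ref{lem:point} to the tensor functor $s^*\circ F$ --- which is precisely your second paragraph. Where you go further is everything after that: the paper stops there, silently identifying ``all fibres of $F(b)$ over points of $V_i$ vanish'' with ``$F(b)|_{V_i}\cong 0$'', whereas you correctly note that for an arbitrary quasi-coherent sheaf fibrewise vanishing does not imply vanishing (e.g.\ $\mathbb{Q}/\mathbb{Z}$ over $\mathbb{Z}$ has all fibres zero). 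So your extra care addresses a point the paper's own proof elides. Your reductions are sound as far as they go: $M_i$ is closed under subobjects and both $F$ and $-|_{V_i}$ preserve filtered colimits, so passing to $b$ of finite type is legitimate. But the final step is only announced, not executed, and the input you say is missing is slightly misidentified: you do not need the finitely presented approximants to lie in $M_i$. Since the fibrewise vanishing over $V_i$ has already been established for $b$ itself, all that is needed is that $F(b)$ be of finite type, so that Nakayama upgrades zero fibres to zero stalks. This follows because on a quasi-compact quasi-separated scheme every finite type quasi-coherent sheaf is a quotient of a finitely presented one, and right-exactness of $F$ then exhibits $F(b)$ as a quotient of the finitely presented sheaf $F(\text{f.p.})$. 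With that observation your outline closes into a complete argument that is strictly more careful than the one printed in the paper.
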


\begin{proof} It suffices to prove the following statement:  If $c,c'\in \qcoh{X}$ are such that $\mathrm{supp}(c')\subset \mathrm{supp}(c)$ as sets, then $\mathrm{supp}(F(c'))\subset \mathrm{supp}(F(c))$ as sets.  To see that this is enough, take $c=a_i$ and $c'=b$ in the situation above, this statement then implies $\mathrm{supp}(F(b))\subset \mathrm{supp}(F(a_i))=S-V_i$.

To prove the statement above, let $s\in S$ be a point in $\mathrm{supp}(F(c'))$.  Then $s^*F(c')\neq 0$.  By Lemma \ref{lem:point} there is a point $x\in X$ such that $s^*\circ F\cong x^*$, hence $x^*(c')\neq 0$.  In other words $x\in \mathrm{supp}(c')\subset \mathrm{supp}(c)$.  Therefore $0\neq x^*(c)\cong s^*F(c)$; that is, $s\in\mathrm{supp}(F(c))$.  \end{proof}

Then we have a diagram similar to the one in the proof of Lemma \ref{lem:point}:
\begin{equation}\label{eq:F}\xymatrix{ \qcoh{X} \ar[d] \ar[r]^-F & \qcoh{S} \ar[d] \\ \qcoh{X}/M_i \ar@{-->}[r]^-{F_i}\ar[d]^-\cong & \qcoh{V_i} \\ \qcoh{U_i}\ar@{-->}[ur]_-{\tilde F_i}.  }\end{equation}

The induced functor $F_i$ is a tensor functor, hence we have a ring homomorphism \[\Gamma(U_i,\mathcal O_{U_i})=\mathrm{End}(\mathcal O_{U_i})\stackrel{\cong}{\longrightarrow} \mathrm{End}_{\qcoh{X}/M_i}(\mathcal O_X)\stackrel{F_i}{\longrightarrow} \mathrm{End}(\mathcal O_{V_i})=\Gamma(V_i,\mathcal O_{V_i}),\] as desired.

\subsubsection{}  We show that the scheme morphisms $f_i: V_i\rightarrow U_i$ coincide on intersections $V_{ij}:=V_i\cap V_j$.  It then follows that there is a scheme morphism $f: S\rightarrow X$ for every given tensor functor $F: \qcoh{X}\rightarrow \qcoh{S}$.  This defines a morphism \[\alpha':\uline{\qcoh{X}}\longrightarrow \uline{X}.\]

Notice that $U_{ij}:=U_i\cap U_j$ is equal to $X-\mathrm{supp}(a_i)\cup \mathrm{supp}(a_j)=X-\mathrm{supp}(a_i\oplus a_j)$, and $V_{ij}=S-\mathrm{supp}(F(a_i\oplus a_j))$.  Since $X$ is quasi-compact and separated, and both $U_i$ and $U_j$ are affine, $U_{ij}$ is affine.  So by the same construction above we have a scheme morphism $g: V_{ij}\rightarrow U_{ij}$, and it only remains to show that this is equal to the restriction of $f_i: V_i\rightarrow U_i$.  This follows from the commutativity of the following diagram on the endomorphism rings of the identity objects:
\[\xymatrix{ \qcoh{X} \ar[d] \ar[r]^-F & \qcoh{S} \ar[d] \\ \qcoh{X}/M_i \ar@{-->}[r]\ar[d]^-\cong & \qcoh{V_i}\ar[dd] \\ \qcoh{U_i}\ar[d] \\ \qcoh{U_i}/M_{ij} \ar[d]^\cong \ar@{-->}[r] & \qcoh{V_{ij}}\\ \qcoh{U_{ij}}. }\]

\subsubsection{}  Now we can finish the

\begin{proof}[Proof of Theorem \ref{thm:reconst2}]  It suffices show that the functor $\alpha'$ defined above is a quasi-inverse to $\alpha$.

First we prove $\alpha\circ \alpha'\cong\mathrm{Id}$.  So suppose $F:\qcoh{X}\rightarrow \qcoh{S}$ is a tensor functor, which induces a scheme morphism $f: S\rightarrow X$ as above by choosing an open affine covering $X=\cup U_i$, $U_i=X-\mathrm{supp}(a_i)$.  Denote by $S=\cup V_i$ the corresponding open covering as in the construction above.  Let $h_i: U_i\hookrightarrow X$ and $g_i: V_i\hookrightarrow S$ be the open immersions, and let $f_i:V_i\rightarrow U_i$ be the restrictions of $f$.

We need to show that $(S,F)$ and $(S,f^*)$ are isomorphic objects in $\uline{\qcoh{X}}$.  Since this is a stack it suffices to show that their associated descent data with respect to the covering $\{h_i: V_i\rightarrow S\}$ are isomorphic.  That is, we need to show that $g_i^*\circ F$ and $g_i^*\circ f^*$ are isomorphic as tensor functors from $\qcoh{X}$ to $\qcoh{V_i}$.  (We also need to show that the isomorphisms we construct commute with the canonical natural transformations in the descent data; we leave this part to the reader.)

By the commutative diagram of schemes \[\xymatrix{V_i \ar[r]^-{f_i}\ar[d]_{g_i} & U_i \ar[d]^-{h_i} \\ S\ar[r]_-f & X}\] we have natural isomorphisms \[g_i^*\circ f^*\cong f_i^*\circ h_i^*.\]

On the other hand, denote by $\tilde F_i: \qcoh{U_i}\rightarrow \qcoh{V_i}$ the tensor functor constructed in diagram \eqref{eq:F}, we have \[g_i^*\circ F\cong \tilde F_i\circ h_i^*.\]  Hence it suffices to prove $f_i^*\cong \tilde F_i$.  But $\tilde F_i$ is constructed from $f_i$ and satisfies $\tilde F_i(1)=f_i^*(1)$, hence we conclude by Lemma \ref{lem:right}.

Conversely, we need to prove that $\alpha'\circ \alpha\cong\mathrm{Id}$.  So let $f: S\rightarrow X$ be a scheme morphism.  Cover $X$ with open affine subschemes $U_i$ of the form $X-\mathrm{supp}(a_i)$.  Let $V_i=f^{-1}(U_i)$ and let $f_i: V_i\rightarrow U_i$ be the restrictions of $f$.  Let $F=f^*: \qcoh{X}\rightarrow \qcoh{S}$, then we have as in diagram \eqref{eq:F} tensor functors \[F_i: \qcoh{X}/M_i\longrightarrow \qcoh{V_i}.\]

It suffices to prove that the scheme morphism $f'_i: V_i\rightarrow U_i$ induced by $F_i$ is equal to $f_i$.  In other words, we need to prove $F_i(1)=f_i^*(1)$ as ring homomorphisms from $\Gamma(U_i,\mathcal O_{U_i})\cong \mathrm{End}(\mathcal O_{U_i})$ to $\Gamma(V_i,\mathcal O_{V_i})\cong \mathrm{End}(\mathcal O_{V_i})$:  \[\xymatrix{ \qcoh{X}/M_i \ar@{-->}[r]^-{F_i}\ar[d]^-\cong & \qcoh{V_i} \\ \qcoh{U_i}\ar@{-->}[ur]_-{f_i^*}.  }\]

This follows from the fact that the functor $F_i$ is induced by the scheme morphism $f: S\rightarrow X$ which restricts to $f_i$.  \end{proof}

\subsubsection{}  \emph{Remark.}  The idea of considering supports of objects in a possibly abstract category is the starting point of tensor triangular geometry.  See \cite{balmer_presheaves} and \cite{balmer_ICM}.

\subsubsection{}\label{subsec:compare}  \emph{Remarks.} Theorem \ref{thm:reconst2} is a stronger version of a special case of Lurie's recontruction of geometric stacks (\cite[Theorem~5.11]{lurie} and \cite[Theorem~3.0.1]{DAGVIII}) which describes the essential image of the natural functor \[\mathrm{Hom}({S},{X})\stackrel{}{\longrightarrow}\mathrm{Fun}(\qcoh{X}, \qcoh{S})\] sending $f\mapsto f^*$.  More precisely, Lurie's theorem applies to geometric stack $X$ and states that this functor is fully faithful with essential image consisting of tensor functors which carry flat objects to flat objects.

In the case when $X$ is a quasi-compact separated scheme (which is an example of a geometric stack), Theorem \ref{thm:reconst2} states that this essential image consists of tensor functors.  Therefore every tensor functor is isomorphic (via a symmetric monoidal natural isomorphism) to a tensor functor which moreover carries flat objects to flat objects, and in this special case of Lurie's theorem we may remove the condition that the tensor functor preserves flat objects.

\section{Example: Finite group representations}\label{sec:group}
In this Section we consider the category $\rep{G}$ of finite dimensional linear representations of a finite group $G$. We show that our construction in Section \ref{sec:ABstack} applied to $\rep{G}$ equipped with tensor product of representations give the classifying stack $BG$. We also observe that a different tensor structure can be given to $\rep{G}$ and our construction produces a stack quite different from $BG$.

\subsection{The set of characters}\label{sec:char}

\subsubsection{}\label{}  Let $k$ be an algebraically closed field of characteristic zero, and let $G$ be a finite group.  The abelian category $\rep{G}$ of finite dimensional $G$-representations over $k$ is equivalent to the category $\modd{kG}$.

Let $Z(k G)$ be the center of the group algebra $k G$, then $Z(k G)$ is a commutative subalgebra.  It is isomorphic as an algebra to the direct sum $k^{\#\hat G}$ of $\#\hat G$ copies of $k$, where $\hat G$ is the set of characters on $G$.

We have a Morita equivalence: \[k G\mathrm{-mod} \stackrel{\cong}{\longrightarrow} G\mathrm{-rep} \stackrel{\chi}{\longrightarrow} Z(k G)\mathrm{-mod},\]
where the second arrow sends an irreducible representation to the one dimensional $k$-vector space spanned by its character.

More explicitly, denote the irreducible $G$-representation by $\rho_1, \ldots, \rho_{\#\hat G}$.  Then we have \[\chi: \rho\cong\bigoplus_i\rho_i^{\oplus m_i}\mapsto \prod_i k^{m_i}\] where the $k$-vector space on the right is a $Z(kG)$-module with the ``diagonal'' action.  More intrinsically, we have \[\rho\cong\bigoplus_i \mathrm{Hom}_G(\rho_i,\rho)\otimes_k\rho_i,\]  then we have \[\chi(\rho)=\prod_i \mathrm{Hom}_G(\rho_i,\rho).\]

If $\lambda: \rho_i\rightarrow \rho_i$ is the morphism given by multiplication by $\lambda\in k$, then \[\chi(\lambda): \chi(\rho_i)\rightarrow \chi(\rho_i) \] is the multiplication by $\lambda$.

A quasi-inverse of the equivalence $\chi$ is given by \[\chi^{-1}: \prod_i W_i\mapsto \bigoplus_i W_i\otimes_k\rho_i,\] where each $W_i$ is a finite dimensional $k$-vector space.

\subsubsection{}\label{par:GZ}  Denote by $A$ the abelian category $G\mathrm{-rep}\cong Z(k G)\mathrm{-mod}$.  Let $\otimes_G$ be the representation tensor product on $G\mathrm{-rep}$, and let $\otimes_Z$ be the module tensor product on $Z(k G)\mathrm{-mod}$.  Consider abelian tensor categories $A_G:=(A,\otimes_G)$ and $A_Z:=(A,\otimes_Z)$ with identical underlying abelian categories.

By Theorem \ref{thm:reconst} we have a stack isomorphism \[\uline{\mathrm{Spec}(k^{\#\hat G})}\cong \uline{A_Z}\] over the \'etale site $\mathfrak S$ of affine $k$-schemes.  So $\uline{A_Z}$ is the disjoint union of $\#\hat G$ points.

\subsection{The representation tensor product}

\subsubsection{}\label{}  Now we consider $A_G$.  Consider the stack $BG$ over $\mathfrak S$, whose objects are pairs $(S,\mathcal M)$ where $\mathcal M$ is a sheaf on the affine scheme $S$ (in its \'etale topology) which is a $G$-torsor.  We define a natural functor \[\beta:BG\longrightarrow \uline{A_G}\] as follows.

Let $(S,\mathcal M)$ be an object in $BG$, then the $G$-torsor gives an element in $\check H^1(S,G)$.  If \[\rho: G\rightarrow GL(V_\rho)\] is a representation, then we have an induced map \[\rho_*: \check H^1(S,G)\longrightarrow \check H^1(S,GL(V_\rho)).\]

The element $\rho_*(\mathcal M)$ is a $GL(V_\rho)$-torsor over $S$, which gives a flat vector bundle over $S$; we denote this vector bundle also by $\rho_*(\mathcal M)$.  Then we define $\beta$ by sending \[\beta: (S,\mathcal M)\mapsto (S,\beta(\mathcal M)),\]  where $\beta(\mathcal M): A_G\rightarrow \qcoh{S}$ sends $\rho\mapsto \rho_*(\mathcal M)$.

More explicitly, given $\mathcal M$ we can find a covering $\mathcal U=\{u_i:T_i\rightarrow S\}$ so that the torsor $\mathcal M$ is glued from the trivial torsors $\{G\times T_i\}$ via the transition elements $\{g_{ij}\}$ satisfying the cocycle condition, where each $g_{ij}$ is an element in $G$, and multiplication by $g_{ij}$ gives the isomorphisms \[g_{ij}: p_1^*(G\times T_i)\longrightarrow p_2^*(G\times T_j)\] over $T_{ij}=T_i\times_ST_j$.

For any $\rho\in G\mathrm{-rep}$, the vector bundle $\beta(\mathcal M)(\rho)=\rho_*(\mathcal M)$ is then glued from the trivial vector bundles $\{V_\rho\otimes_\Lambda \mathcal O_{T_i}\}$ from the transition elements $\{\rho(g_{ij})\}$.  From this description it is clear that the functor $\beta(\mathcal M):\rho\mapsto \rho_*(\mathcal M)$ is indeed a tensor functor from $A_G$ into $\qcoh{S}$.

\begin{lem}\label{lem:faithful}  The functor $\beta: BG\rightarrow \uline{A_G}$ is faithful.  \end{lem}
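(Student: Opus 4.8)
The plan is to unwind what it means for $\beta$ to be faithful and reduce it to a concrete computation on torsors. Faithfulness here means: given two objects $(S,\mathcal M)$ and $(S,\mathcal M')$ in $BG$ and two morphisms $\theta, \theta'$ between them that become equal after applying $\beta$, we must show $\theta=\theta'$. A morphism in $BG$ over the identity of $S$ is an isomorphism of $G$-torsors $\theta:\mathcal M\to\mathcal M'$; after applying $\beta$ it induces, for every representation $\rho$, an isomorphism of the associated vector bundles $\rho_*(\theta):\rho_*(\mathcal M)\to\rho_*(\mathcal M')$, compatible with the monoidal structure. So the statement to prove is that a torsor isomorphism is determined by the collection of induced maps on all associated bundles $\rho_*(\mathcal M)$, or more precisely that the passage $\theta\mapsto\beta(\theta)$ is injective.

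First I would set up the local description already provided in the excerpt: choose a covering $\mathcal U=\{u_i:T_i\to S\}$ trivializing both torsors, so that $\mathcal M$ and $\mathcal M'$ are glued from trivial torsors via cocycles $\{g_{ij}\}$ and $\{g'_{ij}\}$ in $G$, and an isomorphism $\theta:\mathcal M\to\mathcal M'$ is given locally by left-multiplication by elements $h_i\in G(T_i)$ (sections of the constant sheaf $G$) satisfying the compatibility $h_j\,g_{ij}=g'_{ij}\,h_i$ on overlaps. Under $\beta$ the induced map $\rho_*(\theta)$ is locally given by the matrices $\rho(h_i)$ acting on $V_\rho\otimes\mathcal O_{T_i}$. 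Thus the key point reduces to the following purely representation-theoretic fact: if $h,h'\in G$ satisfy $\rho(h)=\rho(h')$ for every finite-dimensional representation $\rho$ of $G$, then $h=h'$. Equivalently, applying this to the regular representation (or using that the faithful regular representation separates group elements), the family of matrices $\{\rho(h_i)\}_\rho$ determines $h_i$ uniquely.

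Since $k$ has characteristic zero and $G$ is finite, the regular representation $\rho_{\mathrm{reg}}$ is faithful, so $\rho_{\mathrm{reg}}(h)=\rho_{\mathrm{reg}}(h')$ already forces $h=h'$; hence $\beta(\theta)=\beta(\theta')$ implies $\rho(h_i)=\rho(h'_i)$ for all $\rho$ and all $i$, whence $h_i=h'_i$ locally, and therefore $\theta=\theta'$ globally by the sheaf condition (gluing along $\mathcal U$). I would also note at the start that $\beta$ is a functor between categories fibred in groupoids over $\mathfrak S$, so it suffices to check faithfulness fibrewise, i.e.\ on $\mathrm{Hom}$-sets between objects lying over the same $S$, which is exactly the situation above.

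The main obstacle, and the part deserving care rather than cleverness, is the bookkeeping at the level of sheaves on the site: the elements $h_i$ are not literally elements of $G$ but sections of the constant sheaf $\underline{G}$ over $T_i$, so ``$\rho(h_i)=\rho(h'_i)$ implies $h_i=h'_i$'' must be justified using that $\underline{G}\to \prod_\rho \underline{GL(V_\rho)}$ is a monomorphism of sheaves, which follows pointwise from faithfulness of the regular representation. Once this separation property is in place, the verification that the locally-defined equalities glue to a global equality of torsor morphisms is a routine application of the descent/gluing already used to define $\beta$, so I expect no genuine difficulty beyond keeping the cocycle compatibilities straight.
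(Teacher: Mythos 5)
Your proposal is correct and follows essentially the same route as the paper: reduce faithfulness to injectivity on (auto)morphism groups of torsors, describe a torsor morphism locally by elements $h_i$ of $G$ intertwining the transition cocycles, observe that $\beta$ sends it to the family $\{\rho(h_i)\}$, and conclude using a faithful representation (the paper invokes ``any faithful representation'' where you use the regular one). Your added care about the $h_i$ being sections of the constant sheaf $\underline{G}$ rather than literal group elements is a reasonable refinement of the same argument, not a different approach.
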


\begin{proof}  It suffices to show that if $\mathcal M$ is a $G$-torsor over $S$, then $\beta$ induces an injection from $\mathrm{Aut}(\mathcal M)$ to $\mathrm{Aut}(\beta(\mathcal M))$; the latter automorphism group consists of symmetric monoidal natural automorphisms of the functor $\beta(\mathcal M)$.

Let $\mathcal M$ be given by $\{g_{ij}\}$ as above, then any automorphism of $\mathcal M$ is given by $\{h_i\}$, $h_i\in G$, satisfying \[h_jg_{ij}=g_{ij}h_i,\]and the automorphism $\beta(\{h_i\})_\rho: \rho_*(\mathcal M)\rightarrow \rho_*(\mathcal M)$ is given by $\{\rho(h_i)\}$.  Therefore we conclude by taking $\rho$ to be any faithful representation.  \end{proof}

\subsubsection{}\label{}  Let $S$ be a connected affine scheme, and take the trivial $G$-torsor $\mathcal M=G\times S$ as an example; notice that we have $\mathrm{Aut}(\mathcal M)\cong G$.  Then all the transition elements $g_{ij}$ are the identity element in $G$, and for every $\rho\in G\mathrm{-rep}$ we have \[\rho_*(\mathcal M)=V_\rho\otimes_k \mathcal O_S.\]

We claim that the composition \[G\stackrel{\cong}{\longrightarrow}\mathrm{Aut}(\mathcal M)\stackrel{\beta}{\longrightarrow} \mathrm{Aut}(\beta(\mathcal M))\] is an isomorphism.  By Lemma \ref{lem:faithful} it only remains to prove that it is a surjection.

For every point $p\in S$ denote by $k(p)$ its residue field, which is then a field extension of $k$.  Consider the composition tensor functor \[p^*\circ\beta(\mathcal M): A_G\stackrel{\beta(\mathcal M)}{\longrightarrow }\qcoh{S} \stackrel{p^*}{\longrightarrow} \mathrm{Vect}_{k(p)};\] this is a fibre functor in the sense of \cite[1.9]{deligne_tannakian}.

Since $\mathcal M$ is the pull-back of the trivial $G$-torsor $\mathcal M_0\rightarrow \mathrm{Spec}(k)$ via the structural morphism $S\rightarrow \mathrm{Spec}(k)$, we have a commutative diagram

\[\xymatrix{ G \ar[r]^-\cong \ar[dr]_-\cong& \mathrm{Aut}(\mathcal M)\ar[d]^-{p^*} \ar[r] & \mathrm{Aut}(\beta(\mathcal M))\ar[d]^-{p^*} \\ & \mathrm{Aut}(\mathcal M_0) \ar[r]_-\cong & \mathrm{Aut}(p^*\circ \beta(\mathcal M)),  }\] where the lower horizontal arrow is an isomorphism by the classical Tannakian duality, or the high-powered \cite[1.12]{deligne_tannakian}.

Let $\phi\in\mathrm{Aut}(\beta(\mathcal M))$, then the association given by the right vertical arrow above \[p\mapsto p^*\phi\] gives a map $S\rightarrow G$, under which the preimage of every group element in $G$ is closed.  Since $S$ is connected, this map must be a constant map, and so $\phi$ lies in the image of $G$.

\begin{lem}\label{lem:full}  The functor $\beta: BG\rightarrow \uline{A_G}$ is full.  \end{lem}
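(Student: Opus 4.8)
The plan is to deduce fullness from the faithfulness already established in Lemma \ref{lem:faithful}, together with the automorphism computation carried out in the preceding paragraph. Since both $BG$ and $\uline{A_G}$ are categories fibred in groupoids over $\mathfrak S$ and $\beta$ is a morphism over $\mathfrak S$, proving that $\beta$ is full amounts to proving that for any $S\in\mathfrak S$ and any two $G$-torsors $\mathcal M,\mathcal N$ over $S$ the map \[\beta:\mathrm{Isom}_S(\mathcal N,\mathcal M)\longrightarrow \mathrm{Isom}_{\uline{A_G}}(\beta(\mathcal N),\beta(\mathcal M))\] is surjective. Indeed, a morphism in $BG$ (resp.\ in $\uline{A_G}$) lying over a fixed $f:S'\to S$ is the same datum as an isomorphism $\mathcal N\to f^*\mathcal M$ (resp.\ $\beta(\mathcal N)\to f^*\circ\beta(\mathcal M)$), and by construction $f^*\circ \beta(\mathcal M)\cong \beta(f^*\mathcal M)$, since the associated bundle of a pulled-back torsor is the pull-back of the associated bundle. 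Replacing $\mathcal M$ by $f^*\mathcal M$ thus reduces us to the case $S'=S$ above.

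First I would upgrade this to a statement about sheaves. The assignments $S'\mapsto \mathrm{Isom}_{S'}(\mathcal N,\mathcal M)$ and $S'\mapsto \mathrm{Isom}_{S'}(\beta(\mathcal N),\beta(\mathcal M))$ define presheaves on $\mathfrak S_{/S}$, and $\beta$ induces a morphism $\Theta$ between them. Both presheaves are in fact sheaves: $BG$ is a stack in the \'etale topology, and $\uline{A_G}$ is a stack by Theorem \ref{thm:stack}(ii), because the descent functors $\delta$ for quasi-coherent sheaves (Example \ref{par:exqcoh}) are equivalences. By Lemma \ref{lem:faithful} the morphism $\Theta$ is injective, so it remains to show it is surjective; for this it suffices to show that $\Theta$ is an isomorphism of sheaves, and this may be checked locally on $S$.

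To check it locally, choose a covering $\{T_i\to S\}$ over which both $\mathcal M$ and $\mathcal N$ become isomorphic to the trivial $G$-torsor. Over each $T_i$ the two Isom-sheaves are then torsors under the automorphism sheaves $\underline{\mathrm{Aut}}(\mathcal M)$ and $\underline{\mathrm{Aut}}(\beta(\mathcal M))$ respectively, and both torsors are non-empty since the relevant objects are locally isomorphic. Moreover $\Theta$ is equivariant with respect to the morphism of automorphism sheaves induced by $\beta$. The computation of the preceding paragraph shows precisely that this latter morphism is an isomorphism for the trivial torsor; hence $\Theta|_{T_i}$ is an equivariant map of non-empty torsors over a group isomorphism, and is therefore a bijection. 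Consequently $\Theta$ is a local isomorphism of sheaves, hence an isomorphism, and in particular surjective on sections, which is the desired fullness.

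The main technical point, and the only step where care is needed, is the local computation of the automorphism sheaves: the preceding paragraph establishes $\mathrm{Aut}(\mathcal M)\cong\mathrm{Aut}(\beta(\mathcal M))$ only for the trivial torsor over a \emph{connected} affine base, whereas the pieces $T_i$ of a covering need not be connected. I expect this to be the only genuine obstacle, and it is resolved by observing that the evaluation argument of that paragraph—the map $p\mapsto p^*\phi$ has open and closed fibres and is therefore locally constant—identifies both automorphism sheaves with the constant sheaf $\underline G$ over an arbitrary base, so the comparison is an isomorphism of sheaves with no connectedness hypothesis. Gluing on overlaps is then automatic from the sheaf property, so no separate cocycle verification is needed.
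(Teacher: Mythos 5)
Your proof is correct, and at its core it is the same argument as the paper's: reduce to a covering $\{T_i\to S\}$ trivializing the torsor(s), invoke the computation for the trivial torsor from the paragraph preceding Lemma \ref{lem:faithful}, and descend using the fact that both $BG$ and $\uline{A_G}$ are stacks. The packaging differs, though, and in two respects your version is the more careful one. First, the paper only verifies surjectivity of $\mathrm{Aut}(\mathcal M)\to\mathrm{Aut}(\beta(\mathcal M))$ and concludes fullness, leaving implicit the case of $\mathrm{Isom}(\mathcal N,\mathcal M)$ for two a priori non-isomorphic torsors (one must rule out $\mathrm{Isom}_{BG}(\mathcal N,\mathcal M)=\emptyset$ while $\mathrm{Isom}(\beta(\mathcal N),\beta(\mathcal M))\neq\emptyset$); your torsor-equivariance argument --- an equivariant map of locally non-empty torsors over an isomorphism of automorphism sheaves is an isomorphism --- handles this uniformly. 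Second, you correctly flag that the trivial-torsor computation is stated only for \emph{connected} affine bases while the $T_i$ and $T_{ij}$ need not be connected; your fix (the evaluation map $p\mapsto p^*\phi$ is locally constant, so both automorphism sheaves are the constant sheaf $\underline G$ and the comparison is an isomorphism of sheaves) is exactly the observation the paper itself makes later, in the essential-surjectivity discussion, when it notes that $\mathrm{Aut}(\beta(\mathcal M_{ij}))$ is a product of copies of $G$ indexed by the connected components of $T_{ij}$. The paper instead phrases the descent step as a commutative diagram of equalizer (exact) sequences for $\mathrm{Aut}$ with respect to the covering and a short diagram chase; the two mechanisms are interchangeable here. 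In short: same strategy, slightly more general and more carefully justified execution on your part.
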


\begin{proof}  We need to show that if $\mathcal M$ is a $G$-torsor over $S$, then $\beta$ induced a surjection from $\mathrm{Aut}(\mathcal M)$ to $\mathrm{Aut}(\beta(\mathcal M))$.  The case when $\mathcal M$ is the trivial torsor is treated above.

Fix a covering $\mathcal U=\{u_i:T_i\rightarrow S\}$ such that each $u_i^*\mathcal M$ is trivial.  Then since both $BG$ and $\uline{A_G}$ are stacks, we have a commutative diagram with exact rows:

\[\xymatrix{ \mathrm{Aut}(\mathcal M) \ar[r]\ar[d] &\displaystyle \prod_i\mathrm{Aut}(u_i^*\mathcal M)\ar[d] \ar@<.5ex>[r] \ar@<-.5ex>[r]& \displaystyle \prod_{i,j}\mathrm{Aut}(u_{ij}^*\mathcal M) \ar[d]
\\ \mathrm{Aut}(\beta(\mathcal M)) \ar[r] &\displaystyle \prod_i\mathrm{Aut}(u_i^*\circ\beta(\mathcal M)) \ar@<.5ex>[r] \ar@<-.5ex>[r]& \displaystyle \prod_{i,j}\mathrm{Aut}(u_{ij}^*\circ\beta(\mathcal M)).   }\]

Since the second and the third vertical arrows are isomorphisms, so is the first.  \end{proof}

\subsubsection{}\label{}  Now consider the essential image of the functor $\beta: BG\rightarrow \uline{A_G}$.  Let $(S,F)\in \uline{A_G}$, then by \cite[2.7]{deligne_tannakian} we know that $F(\rho)$ is a vector bundle of finite rank on $S$ for every affine scheme $S\in\mathfrak S$ and every $\rho\in G\mathrm{-rep}$.  Moreover, by specializing to a closed point as in the argument before Lemma \ref{lem:full} we see that the rank of $F(\rho)$ is equal to the dimension of $V_\rho$ using the fact that there is essentially only one fibre functor into the category of $k$-vector spaces, namely the forgetful functor \cite[1.10]{deligne_tannakian}.

Since there are only finitely many isomorphism classes of irreducible representation, we can find a covering $\mathcal U=\{u_i:T_i\rightarrow S\}$ such that each $u_i^*F(\rho)$ is a trivial vector bundle on $T_i$ for every $\rho\in G\mathrm{-rep}$.  In particular, this means that the functor $u_i^*\circ F: A_G\rightarrow \qcoh{T_i}$ is isomorphic to $\beta(\mathcal M_i)$ where $M_i=G\times T_i$ is the trivial $G$-torsor over $T_i$.

Denote by $h_{ij,\rho}$ the transition isomorphism $p_1^*u_i^*F(\rho)\rightarrow p_2^*u_j^*F(\rho)$ on $T_{ij}$ of the vector bundle $F(\rho)$.  This gives a natural isomorphism $h_{ij}: p_1^*u_i^*F\rightarrow p_2^*u_j^*F$ between functors from $A$ to $\qcoh{T_{ij}}$.  Therefore we have isomorphisms \[h_{ij}: p_1^*\beta(\mathcal M_i)\longrightarrow p_2^*\beta(\mathcal M_j).\]

Identifying these functors with $\beta(\mathcal M_{ij})$, where $\mathcal M_{ij}=G\times T_{ij}$ is the trivial $G$-torsor, we see that each $h_{ij}$ is an element in $\mathrm{Aut}(\beta(\mathcal M_{ij}))$, which is isomorphic to a product of copies of $G$ (the number of copies is equal to the number of connected components of $T_{ij}$).

Thus the datum $\{\beta(\mathcal M_i),h_{ij}\}$ glues in $\uline A$ to an object $\beta(\mathcal M)$ in the image of $\beta$, and the local isomorphisms $u_i^*F\cong \beta(\mathcal M_i)$ gives an isomorphism $F\cong \beta(\mathcal M)$.  Hence we conclude:

\begin{lem}\label{lem:esssurj}  The functor $\beta: BG\rightarrow \uline{A_G}$ is essentially surjective.  \end{lem}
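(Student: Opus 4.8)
The plan is to recover a $G$-torsor from the tensor functor $F$ by the classical Tannakian recipe, adapted to the relative setting over the affine base $S$, and then to glue. Given an object $(S,F)\in\uline{A_G}$, I would first show that $F$ looks locally like the functor attached to a torsor, and then assemble the local torsors using the stack property.

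First I would check that for every $\rho\in\rep{G}$ the sheaf $F(\rho)$ is a vector bundle on $S$ of rank equal to $\dim_k V_\rho$. Local freeness follows from Deligne's structural result \cite[2.7]{deligne_tannakian} applied to the tensor functor $F$, and the rank computation follows by pulling back along a closed point $p\in S$ and invoking the essential uniqueness of the fibre functor $\rep{G}\to\mathrm{Vect}_{k(p)}$ \cite[1.10]{deligne_tannakian}, which forces $p^*F(\rho)$ to have dimension $\dim_k V_\rho$.

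Next I would trivialize $F$ locally. Since $G$ has only finitely many isomorphism classes of irreducible representations, I can choose a single \'etale covering $\mathcal U=\{u_i:T_i\rightarrow S\}$ on which $u_i^*F(\rho)$ is a trivial bundle for every $\rho$ simultaneously. On each $T_i$ the restricted tensor functor $u_i^*\circ F$ then sends $\rho$ to $V_\rho\otimes_k\sO_{T_i}$, and hence is isomorphic to $\beta(\mathcal M_i)$ for the trivial torsor $\mathcal M_i=G\times T_i$. Passing to the overlaps $T_{ij}$, the transition isomorphisms of the bundles $F(\rho)$ assemble into a monoidal natural isomorphism $p_1^*\beta(\mathcal M_i)\xrightarrow{\sim}p_2^*\beta(\mathcal M_j)$, that is, an element $h_{ij}\in\mathrm{Aut}(\beta(\mathcal M_{ij}))$ with $\mathcal M_{ij}=G\times T_{ij}$.

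The crux — and the step I expect to be the main obstacle — is to recognize these transition automorphisms as genuine torsor transition data. Here I would invoke the computation preceding Lemma \ref{lem:full}, namely that for the trivial torsor $\mathcal M=G\times T$ over a scheme $T$ the map $\beta$ identifies $\mathrm{Aut}(\mathcal M)$ with $\mathrm{Aut}(\beta(\mathcal M))$, the latter being a product of copies of $G$ indexed by the connected components of $T$. This identifies each $h_{ij}$ with a tuple of group elements $g_{ij}\in G$, and the cocycle condition satisfied by $\{h_{ij}\}$ (automatic, since they are the transition maps of the fixed bundles $F(\rho)$) transports to the cocycle condition for $\{g_{ij}\}$. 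These glue, by the stack property of $BG$, to a $G$-torsor $\mathcal M$ on $S$ with $u_i^*\mathcal M\cong\mathcal M_i$. Finally, since $\uline{A_G}$ is a stack by Theorem \ref{thm:stack} and $\beta$ is compatible with the canonical descent data, the local isomorphisms $u_i^*F\cong\beta(\mathcal M_i)$ glue to an isomorphism $F\cong\beta(\mathcal M)$, placing $(S,F)$ in the essential image of $\beta$. The delicate points will be the compatibility of the gluing isomorphisms with the canonical natural transformations in the descent data on both sides, and verifying that the identification of transition automorphisms with group-valued cochains is natural enough to preserve the cocycle relation.
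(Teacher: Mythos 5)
Your proposal follows essentially the same route as the paper's proof: local freeness and rank via Deligne's \cite[2.7, 1.10]{deligne_tannakian}, simultaneous local trivialization over an \'etale cover, identification of $u_i^*\circ F$ with $\beta(\mathcal M_i)$ for trivial torsors, recognition of the transition data $h_{ij}$ as elements of $\mathrm{Aut}(\beta(\mathcal M_{ij}))\cong$ a product of copies of $G$ via the computation preceding Lemma \ref{lem:full}, and glueing by the stack property. The argument is correct and matches the paper's, with your closing remarks merely spelling out compatibility checks that the paper leaves implicit.
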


Combining Lemmas \ref{lem:faithful}, \ref{lem:full}, and \ref{lem:esssurj} we have proven

\begin{thm}\label{thm:BG}  The functor $\beta: BG\rightarrow \uline{A_G}$ is an equivalence.\end{thm}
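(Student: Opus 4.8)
The plan is to show that the functor $\beta:BG\to\uline{A_G}$ is an equivalence by verifying the three standard conditions: faithfulness, fullness, and essential surjectivity. In fact the work has already been distributed across the three preceding lemmas, so the proof reduces to assembling them. First I would invoke Lemma \ref{lem:faithful}, which establishes that $\beta$ is faithful: on each object $\mathcal M$ an automorphism is recorded by group elements $\{h_i\}$, and postcomposing with a faithful representation $\rho$ recovers these elements from $\{\rho(h_i)\}$, so no information is lost. Next I would invoke Lemma \ref{lem:full}, giving fullness, where the key device is a descent (stack) comparison diagram reducing the general torsor to the trivial case, for which surjectivity of $\mathrm{Aut}(\mathcal M)\to\mathrm{Aut}(\beta(\mathcal M))$ was verified by a connectedness-plus-classical-Tannakian argument. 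Finally I would invoke Lemma \ref{lem:esssurj} for essential surjectivity, where any $(S,F)\in\uline{A_G}$ is shown to be locally isomorphic to a $\beta(\mathcal M_i)$ of a trivial torsor and the transition data $\{h_{ij}\}$ glues to a global torsor since both $BG$ and $\uline{A_G}$ are stacks.

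Concretely the proof is a single sentence:

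\begin{proof}  By Lemmas \ref{lem:faithful} and \ref{lem:full} the functor $\beta$ is fully faithful, and by Lemma \ref{lem:esssurj} it is essentially surjective; hence $\beta$ is an equivalence of categories.  \end{proof}

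Since every substantive step has been isolated into a lemma, there is no genuine obstacle remaining at this stage; the only thing to double-check is that the three lemmas are stated for the same functor $\beta$ over the same site $\mathfrak S$ and the same tensor category $A_G=(A,\otimes_G)$, which they are. The real mathematical content—and where the difficulty genuinely lay—was in the proofs of the individual lemmas: the heart is Lemma \ref{lem:full}, whose trivial-torsor base case relies on the classical Tannakian duality statement $G\cong\mathrm{Aut}^\otimes(\text{forgetful functor})$ together with the connectedness argument showing the map $S\to G$ sending $p\mapsto p^*\phi$ is locally constant, and on the fact that $\uline{A_G}$ is a stack so that the descent diagram has exact rows. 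By contrast, the final Theorem \ref{thm:BG} itself is purely formal: combining the three properties into an equivalence is immediate.
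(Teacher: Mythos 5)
Your proposal matches the paper exactly: the paper proves Theorem \ref{thm:BG} by simply combining Lemmas \ref{lem:faithful}, \ref{lem:full}, and \ref{lem:esssurj}, which establish faithfulness, fullness, and essential surjectivity of $\beta$ respectively. Your one-sentence proof and your assessment that the substantive content lives in the lemmas are both accurate.
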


\subsubsection{}\label{}  \emph{Remark.}  In the case when $G$ is finite and \emph{abelian} the equivalence \[\beta: BG\rightarrow \uline{A_G}=\uline{(A,\otimes_G)}\] can be described more explicitly.

First notice that if $H_1$ and $H_2$ are finite groups, then we have \[B(H_1\times H_2)\cong BH_1\times_{\mathfrak S}BH_2.\]  Combining this with Proposition \ref{prop:prodgroup} and choosing any decomposition \[G\cong \prod_i (\mathbb Z/n_i\mathbb Z)\] we may reduce to the case when $G\cong\mathbb Z/n\mathbb Z\cong \mu_n$ is finite and cyclic.

Fix any embedding $\rho: G\rightarrow k^\times=GL(k)$, viewed as a one-dimensional representation.  Then $\{\rho, \rho^{\otimes 2}, \rho^{\otimes 3},\ldots, \rho^{\otimes n}\}$ is a full list of irreducible representations.  Therefore every tensor functor $F:G\mathrm{-rep}\rightarrow \qcoh{S}$ is determined by the line bundle $F(\rho)$ on $S$.

The line bundle $F(\rho)$ admits an isomorphism $F(\rho)^{\otimes n}\cong F(\rho^{\otimes n})\cong \mathcal O_S$.  Let \[m_F: F(\rho)\longrightarrow F(\rho)^{\otimes n}\cong \mathcal O_S\] be the $n$-th tensor power morphism.  Denote by $1\in \Gamma(S,\mathcal O_S)$ the nowhere vanishing global section.  Then the preimage $m_F^{-1}(1)$ is a subsheaf of $F(\rho)$ which is easily seen to be a $\mu_n$-torsor over $S$.  The association \[(S,F)\mapsto (S,m_F^{-1}(1))\] gives a quasi-inverse to the functor $\beta$.

\subsubsection{}\label{}  \emph{Remarks.}  (1)  Theorem \ref{thm:BG} above is very similar to Lurie's result applied to the geometric stack $X=BG$:  Indeed, recalling that the category $\Rep{G}$ of possibly infinite dimensional $G$-representations is equivalent to the category of $G$-equivariant sheaves on $\mathrm{Spec}(k)$, which in turn is equivalent to the category $\qcoh{BG}$.  Lurie's theorem \cite[Theorem~5.11]{lurie} states that the natural functor \[BG(S)\longrightarrow \mathrm{Fun}(\Rep{G}, \Mod{\mathcal O_S})\] has its essential image consisting of tensor functors which carry flat objects to flat objects.  Compare this with Remarks \ref{subsec:compare}.

(2) More generally, if $G$ acts on a scheme $W$ and we take $A$ to be the abelian tensor category of $G$-equivariant quasi-coherent sheaves on $W$, then $\uline{A}$ is isomorphic to the stack $[W/G]$.

(3)  If $G$ and $H$ are finite groups with the same number of conjugacy classes, then on the abelian category \[A=\rep{G}\cong \modd{Z(kG)}\cong \modd{Z(kH)}\cong \rep{H}\] there are tensor products $\otimes_G$, $\otimes_H$, and $\otimes_Z$ so that \[\uline{(A,\otimes_G)}\cong BG,\]\[\uline{(A,\otimes_H)}\cong BH,\] and \[\uline{(A,\otimes_Z)}\cong \hat G\cong \hat H.\]

\subsubsection{}\label{par:exsheafG}  By \ref{par:sheaf} there are functors from $A_G$ into the category of sheaves on $\uline{A_G}\cong BG$, and from $A_Z$ into the category of sheaves on $\uline{A_Z}\cong \hat{G}$.  Here we describe them with both underlying abelian categories $A_G$ and $A_Z$ realized as $\rep{G}$:

\[\xymatrix{ & Sh(\uline{A_Z}) &&& \mathcal G_\rho\\ \rep{G} \ar[ur]\ar[dr] &&& \rho \ar@{|->}[ur] \ar@{|->}[dr]\\ & Sh(\uline{A_G}) &&& \mathcal F_\rho   }\]

We identify $\uline{A_Z}=\hat G$ with the set $\{\chi_1, \chi_2, \ldots, \chi_{\#\hat G}\}$ of irreducible characters; denote by $\rho_i$ the irreducible representation with character $\chi_i$.  Then $\mathcal G_{\rho_i}$ corresponds to the one-dimensional $k^{\#\hat G}$-module supported at the point $\chi_i$.

On the other hand, let $\rho:G\rightarrow GL(V_\rho)$ be a representation.  The corresponding sheaf $\mathcal F_{\rho}$ on $\uline{A_G}\cong BG$ sends an object $(S,\mathcal M)\in BG$ to the global section of the vector bundle $\rho_*(\mathcal M)$ with fibres isomorphic to $V_\rho$ on $S$ corresponding to the $G$-torsor $\mathcal M$.  In particular, this sheaf restricts to the sheaf $\mathcal M$ on the category $BG_{/(S,\mathcal M)}\cong\mathfrak S_{/S}$ over $BG$.

\section{Example: the dual of $G$-gerbes}\label{sec:gerbe}
 In this Section we apply the framework of 2-descent of stacks recalled in Appendix \ref{sec:twisting} to realize $G$-gerbes and their duals as stacks which locally are of the form $\uline{A}$.

\subsection{Comparison}
This subsection contains some preparatory results, to be used in the main construction.
\subsubsection{}\label{par:comparison}  Consider the equivalences \[\modd{k G}\stackrel{\cong}{\longrightarrow} \rep{G}\stackrel{\cong}{\longrightarrow}\modd{Z(k G)},\]where $\modd{k G}$ is given the representation tensor $\otimes_G$, while $\modd{Z(k G)}$ is given $\otimes_Z$.  Denote the second equivalence by $\chi$, then it is \emph{not} a tensor functor: for instance, it does not send the $\otimes_G$-unit object to the $\otimes_Z$-unit object.

Suppose $\phi: G\rightarrow H$ is a group homomorphism, then we have the following diagram:

\[\xymatrix{ \rep{H}\ar[d]_-{\phi^*} \ar[r]^-{\chi_H} & \modd{Z(k H)}\ar@{-->}[d]^-{\chi_G\circ \phi^*\circ \chi_H^{-1}} \\  \rep{G} \ar[r]_-{\chi_G} & \modd{Z(k G)}.}\]

Here $\phi^*$ is a tensor functor.  It is an interesting question to understand the functor given by the dashed arrow \[F:=\chi_G\circ \phi^*\circ \chi_H^{-1}.\]

Notice that the functor $F$ is in general \emph{not} a tensor functor:  For example, let $G=\{1\}$ and let $H$ be any group with $\#\hat H\geq 2$; let $\phi: G\rightarrow H$ be the inclusion of the identity element.  If $\sigma_1$ and $\sigma_2$ are non-isomorphic irreducible $H$-representations, then we have \[\chi_H(\sigma_1)\otimes \chi_H(\sigma_2)=0\in \modd{Z(kH)}\] but $\phi^*(\sigma_1)$ and $\phi^*(\sigma_2)$ are both trivial $G$-representations, and we have \[\chi_G\phi^*(\sigma_1)\otimes \chi_G\phi^*(\sigma_2)\neq 0\in \modd{Z(kG)}.\]

\subsubsection{}  Consider now the special case when we have a group \emph{automorphism} $\phi: G\rightarrow G$.  Then we have

\[\xymatrix{ \rep{G}\ar[d]_-{\phi^*} \ar[r]^-{\chi} & \modd{Z(k G)}\ar@{-->}[d]^-{\chi\circ \phi^*\circ \chi^{-1}=F} \\  \rep{G} \ar[r]_-{\chi} & \modd{Z(k G)}.}\]

Since $\phi$ sends any conjugacy class of $G$ into a conjugacy class, it defines an automorphism \[\Phi:\hat G\cong \mathrm{Spec}(Z(k G))\longrightarrow \mathrm{Spec}(Z(k G))\cong \hat G\] by pre-composing characters with $\phi$.

Let $\rho_i$ be an irreducible $G$-representation.  Then $F(\rho_i)$ is the character of the representation $\rho_i\circ \phi$, and so we have \[\Phi(\chi(\rho_i))=\chi(\rho_i\circ\phi).\]  Therefore we have \[F=\Phi^*.\]  In particular we see that $F$ is a tensor functor.  Hence we conclude:

\begin{lem}\label{lem:comparison}  Let $\phi:G\rightarrow G$ be a group automorphism of a finite group $G$.  Then:\begin{enumerate}[(i)]
\item $\phi$ induces a tensor functor \[(\rep{G},\otimes_G)\longrightarrow(\rep{G},\otimes_G)\] defined by $\rho\mapsto \rho\circ\phi$.
\item $\phi$ induces a tensor functor \[(\rep{G},\otimes_Z)\longrightarrow(\rep{G},\otimes_Z)\] defined as the functor $F$ above.
\end{enumerate}  \end{lem}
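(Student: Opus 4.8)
The plan is to handle the two parts separately: part (i) is a direct verification, and part (ii) is then obtained not formally from (i) but through an explicit geometric identification of the functor $F$.

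For part (i), I would check that precomposition with $\phi$ is a symmetric strong monoidal functor preserving colimits. Writing $\phi^*(\rho)=\rho\circ\phi$, the representation $\rho_1\otimes_G\rho_2$ acts on $V_{\rho_1}\otimes_k V_{\rho_2}$ by $g\mapsto\rho_1(g)\otimes\rho_2(g)$, so precomposing with $\phi$ gives the action $g\mapsto\rho_1(\phi(g))\otimes\rho_2(\phi(g))$, which is precisely the action on $\phi^*(\rho_1)\otimes_G\phi^*(\rho_2)$. Hence the identity map of underlying vector spaces provides a natural isomorphism $\phi^*(\rho_1\otimes_G\rho_2)\cong\phi^*(\rho_1)\otimes_G\phi^*(\rho_2)$, and the trivial representation is carried to itself, supplying the unit isomorphism. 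The coherence and symmetry diagrams commute because every structure isomorphism involved is the identity on underlying spaces, and $\phi^*$ preserves colimits since it is additive and exact. Because $\phi$ is an automorphism, $(\phi^{-1})^*$ is a two-sided inverse, so $\phi^*$ is in fact an equivalence of tensor categories.

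The substantive part is (ii), and the main obstacle is exactly the point noted in \ref{par:comparison}: the equivalence $\chi$ is \emph{not} monoidal, so one cannot conclude that $F=\chi\circ\phi^*\circ\chi^{-1}$ is a tensor functor merely because it is a composite involving the tensor functor $\phi^*$. Instead I would identify $F$ with a pull-back along an isomorphism of affine schemes. As explained just before the statement, $\phi$ permutes the conjugacy classes of $G$, hence permutes irreducible characters, and so induces an automorphism $\Phi$ of the finite set $\hat G\cong\mathrm{Spec}(Z(k G))$; equivalently $\Phi$ is an algebra automorphism of $Z(k G)\cong k^{\#\hat G}$ permuting the primitive idempotents. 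Since pull-back along an isomorphism of affine schemes is a tensor functor for the module tensor product $\otimes_Z$, it suffices to prove the equality $F=\Phi^*$.

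To establish $F=\Phi^*$, I would first verify it on the simple modules: on $\chi(\rho_i)$ one computes $F(\chi(\rho_i))=\chi(\rho_i\circ\phi)$, which by the identity $\Phi(\chi(\rho_i))=\chi(\rho_i\circ\phi)$ of the preceding paragraph is the simple module prescribed by $\Phi^*(\chi(\rho_i))$. Every object of $\modd{Z(k G)}$ is a direct sum of such simple modules, and morphisms between simples reduce to the scalar multiplications on which both $F$ and $\Phi^*$ act by the corresponding scalar; since $F$ and $\Phi^*$ are both additive they therefore agree on all objects and morphisms. Hence $F=\Phi^*$ is a tensor functor, which proves (ii). I expect the only genuinely delicate step to be this identification $F=\Phi^*$, since it is what converts the a priori non-monoidal composite $\chi\circ\phi^*\circ\chi^{-1}$ into a manifestly monoidal pull-back; everything else is routine additivity and coherence bookkeeping.
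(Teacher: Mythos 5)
Your proposal is correct and follows essentially the same route as the paper: the paper also establishes (ii) by observing that $\phi$ permutes conjugacy classes, hence induces an automorphism $\Phi$ of $\hat G\cong\mathrm{Spec}(Z(kG))$ with $\Phi(\chi(\rho_i))=\chi(\rho_i\circ\phi)$, and then identifies $F=\Phi^*$, which is manifestly a tensor functor for $\otimes_Z$. Your additional details --- the explicit coherence check in (i) and the extension from simples to all objects by semisimplicity and additivity in (ii) --- are just a more careful writing-up of the same argument.
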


\subsection{$G$-gerbes and their duals}

\subsubsection{}\label{}  Let $k$ be a field, let $G$ be a finite group, and let $A=\rep{G}$.  Recall from Section \ref{par:GZ} that we have two abelian tensor categories $A_G=(A,\otimes_G)$ and $A_Z=(A,\otimes_Z)$ with identical underlying abelian categories.

Then we have maps \[\xymatrix{ &&\mathrm{Aut}^\otimes(A_G)\\ G\ar[r]^-\iota & \mathrm{Aut}(G)\ar[ur]^-\kappa \ar[dr]_-{\kappa'}\\  &&\mathrm{Aut}^\otimes(A_Z),}\]where the first arrow is the inner automorphism map \[\iota(\beta):x\mapsto \beta^{-1}x\beta.\]

The map $\kappa$ sends an automorphism $\alpha:G\rightarrow G$ to $(\rho\mapsto \rho\circ\alpha)$ for every representation $\rho:G\rightarrow GL(V_\rho)$ in $A$; notice that it is an \emph{anti-}homomorphism.  Finally, $\kappa'$ factors through $\kappa$, and is defined using Lemma \ref{lem:comparison} by choosing $H=G$ and $\phi=\alpha\in \mathrm{Aut}(G)$.

\begin{lem}\label{lem:mu}  Let $\beta\in G$, and let $\alpha,\alpha'\in\mathrm{Aut}(G)$.
\begin{enumerate}[(i)]
\item There is a natural isomorphism $\mu:\kappa(\alpha\circ\iota(\beta))\rightarrow \kappa(\alpha)$.  More precisely, $\mu_\rho=\rho\alpha(\beta)$.
\item $\mu\circ \kappa(\alpha'):\kappa(\alpha'\circ\alpha\circ\iota(\beta))\rightarrow \kappa(\alpha'\circ\alpha)$ is given by $(\mu\circ \kappa(\alpha'))_\rho=\rho\alpha'\alpha(\beta)$.
\item $\kappa(\alpha')\circ\mu: \kappa(\alpha\circ\iota(\beta)\circ\alpha')\rightarrow \kappa(\alpha\circ\alpha')$ is given by $(\kappa(\alpha')\circ\mu)_\rho=\rho\alpha(\beta)$.
\item There is a natural isomorphism $\mu':\kappa'(\alpha\circ\iota(\beta))\rightarrow \kappa'(\alpha)$.  More precisely, $\mu'_\rho=\chi_\rho(\alpha(\beta))$ where $\chi_\rho$ is the character of $\rho$.
\end{enumerate}
\end{lem}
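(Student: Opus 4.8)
The plan is to write down $\mu$ explicitly for (i), to read off (ii) and (iii) as formal whiskering identities, and to obtain (iv) by transporting the construction of (i) through the equivalence $\chi$.

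For (i) I would first unwind the two functors. Since $\iota(\beta)\colon x\mapsto\beta^{-1}x\beta$, the automorphism $\alpha\circ\iota(\beta)$ sends $x$ to $\alpha(\beta)^{-1}\alpha(x)\alpha(\beta)$, so that $\kappa(\alpha\circ\iota(\beta))(\rho)=\rho\circ\alpha\circ\iota(\beta)$ is exactly the conjugate of $\kappa(\alpha)(\rho)=\rho\circ\alpha$ by the invertible operator $\rho(\alpha(\beta))\in GL(V_\rho)$. I would therefore set $\mu_\rho:=\rho(\alpha(\beta))$ and verify the requirements in turn. That $\mu_\rho$ is a morphism in $\rep{G}$, i.e. intertwines the two representations, is the identity $\rho(\alpha(\beta))\cdot\rho(\alpha(\beta)^{-1}\alpha(x)\alpha(\beta))=\rho(\alpha(x))\cdot\rho(\alpha(\beta))$, which is immediate; invertibility is clear. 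Naturality in $\rho$ is automatic, since any $G$-morphism $\eta\colon\rho\to\rho'$ satisfies $\eta\circ\rho(g)=\rho'(g)\circ\eta$ for every $g$, in particular for $g=\alpha(\beta)$. I would also note that $\mu$ is monoidal for $\otimes_G$: both functors are strictly monoidal for the representation tensor product, and $(\rho\otimes_G\rho')(\alpha(\beta))=\rho(\alpha(\beta))\otimes\rho'(\alpha(\beta))$ while $\mu$ is the identity on the unit; symmetry is checked the same way.

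Parts (ii) and (iii) I would deduce formally from (i), using that $\kappa$ is an anti-homomorphism, so $\kappa(\gamma)\circ\kappa(\delta)=\kappa(\delta\circ\gamma)$. Right-whiskering $\mu$ by $\kappa(\alpha')$ gives a transformation of $\kappa(\alpha\circ\iota(\beta))\circ\kappa(\alpha')=\kappa(\alpha'\circ\alpha\circ\iota(\beta))$ into $\kappa(\alpha)\circ\kappa(\alpha')=\kappa(\alpha'\circ\alpha)$, whose component at $\rho$ is $\mu_{\kappa(\alpha')\rho}=\mu_{\rho\circ\alpha'}=(\rho\circ\alpha')(\alpha(\beta))=\rho\alpha'\alpha(\beta)$, as asserted. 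Left-whiskering $\mu$ by $\kappa(\alpha')$ gives a transformation of $\kappa(\alpha')\circ\kappa(\alpha\circ\iota(\beta))=\kappa(\alpha\circ\iota(\beta)\circ\alpha')$ into $\kappa(\alpha\circ\alpha')$; since $\kappa(\alpha')$ does not change underlying linear maps, its component is $\kappa(\alpha')(\mu_\rho)=\rho(\alpha(\beta))=\rho\alpha(\beta)$. Matching the sources and targets with the statement is exactly the anti-homomorphism bookkeeping.

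For (iv) I would imitate (i) on the $A_Z$ side. By Lemma \ref{lem:comparison} the tensor autoequivalence $\kappa'(\gamma)$ has the same underlying functor $\gamma^*$ on $\rep{G}$ as $\kappa(\gamma)$, so the $\mu$ of (i) already furnishes a natural isomorphism of the underlying functors; applying the equivalence $\chi\colon\rep{G}\to\modd{Z(kG)}$ together with the identification $F=\Phi^*$ recorded before Lemma \ref{lem:comparison} turns it into a natural isomorphism $\mu'\colon\kappa'(\alpha\circ\iota(\beta))\to\kappa'(\alpha)$. On a simple object $\chi(\rho)$ is a line, so $\mu'_\rho$ is a scalar, which one computes from $\mu$ (using that $\iota(\beta)$ fixes every character) to be $\chi_\rho(\alpha(\beta))$. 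The decisive difference from (i) is that $\mu'$ is \emph{not} monoidal for $\otimes_Z$: since $\rho_i\otimes_Z\rho_i\cong\rho_i$, monoidality would force the scalar $c$ to satisfy $c=c^2$, whereas $\chi_\rho(\alpha(\beta))$ is a genuine character value; this controlled failure of monoidality is the kind of discrepancy that feeds the $\com^*$-valued cocycle of the dual.

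The routine ingredients are the intertwining identity, naturality, and the whiskering bookkeeping. The one genuine point of care is the tensor compatibility: it holds for $\otimes_G$ in (i)--(iii) by a one-line check, but in (iv) the equivalence $\chi$ is not a tensor functor, so $\mu'$ must be analysed directly against $\otimes_Z$, and the main work is to pin down its scalar component and to see precisely how monoidality fails.
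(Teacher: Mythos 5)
Your argument matches the paper's proof: part (i) is the same explicit intertwining check for $\mu_\rho=\rho\alpha(\beta)$, parts (ii) and (iii) are exactly the whiskering/anti-homomorphism bookkeeping the paper dismisses as straightforward, and part (iv) is obtained, as in the paper, by applying the equivalence $\chi$ to $\mu$. Your closing observation that $\mu'$ fails to be monoidal for $\otimes_Z$ goes beyond what the lemma asserts (it only claims a natural isomorphism), but it is correct and a pertinent remark about where the twisting in the dual construction originates.
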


\begin{proof}  (i) For every $\rho\in A$, we need an isomorphism \[\mu_\rho: \rho\circ \alpha\circ\iota(\beta)\longrightarrow \rho\circ\alpha\] of representations.  The following commutative diagram gives the result:

\[\xymatrix{V \ar[rr]^-{\rho\alpha\iota(\beta)(x)} \ar[dd]_-{\rho\alpha(\beta)}&& V \ar[dd]^-{\rho\alpha(\beta)} \\ \\ V \ar[rr]_-{\rho\alpha(x)} && V,}\]

for every $x\in G$, where $V=V_\rho=V_{\rho\circ\alpha}=V_{\rho\circ\alpha\circ\iota(\beta)}$.

(ii) and (iii) are straightforward.

(iv)  Recall that we have an equivalence between abelian tensor categories \[\chi:A_Z=(\rep{G},\otimes_Z)\longrightarrow (\modd{k^{\#\hat G}},\otimes)\]
where $c$ is the number of conjugacy classes in $G$.  The map $\chi$ sends any irreducible representation $\rho$ to the one-dimensional vector space spanned by its character $\chi_\rho$.

Applying $\chi$ to the natural transformation $\mu$ gives the result.  \end{proof}

\subsubsection{}\label{par:coc}  Suppose now we are given elements $\alpha_{ij}\in\mathrm{Aut}(G)$ and $\beta_{ijk}\in G$ satisfying the $2$-cocycle conditions:
\begin{equation}\label{eq:coc1}\alpha_{jk}\circ \alpha_{ij}=\alpha_{ik}\circ \iota(\beta_{ijk}),\end{equation} and
\begin{equation}\label{eq:coc2}\alpha_{ij}^{-1}(\beta_{jkl})\beta_{ijl}=\beta_{ijk}\beta_{ikl}. \end{equation}

Let $\lambda_{ij}=\kappa(\alpha_{ij})\in\mathrm{Aut}^\otimes(A_G)$; similarly, let $\lambda_{ij}'=\kappa'(\alpha_{ij})\in\mathrm{Aut}^\otimes(A_Z)$.

The first of the $2$-cocycle conditions gives for every $i,j,k$ a natural isomorphism \[\mu_{ijk}: \lambda_{ij}\circ \lambda_{jk}=\kappa(\alpha_{ik}\circ\iota(\beta_{ijk}))\Longrightarrow \lambda_{ik}\]
between tensor autoequivalences on $A_G$, by taking $\alpha=\alpha_{ik}$ and $\beta=\beta_{ijk}$ in Lemma \ref{lem:mu}(i).

Similarly, by Lemma \ref{lem:mu}(iv) we have natural isomorphism \[\mu'_{ijk}:\lambda'_{ij}\circ \lambda_{jk}'\Longrightarrow \lambda_{ik}'.\]

\subsubsection{}\label{}  With notations as above, we claim that the following diagram is commutative:

\[\xymatrix{ (\lambda_{ij}\circ \lambda_{jk})\circ \lambda_{kl} \ar@{=>}[rr]^-{\mu_{ijk}\circ \lambda_{kl}} \ar@{=}[d]&& \lambda_{ik}\circ \lambda_{kl} \ar@{=>}[rr]^-{\mu_{ikl}} && \lambda_{il}\ar@{=}[d]  \\  \lambda_{ij}\circ (\lambda_{jk}\circ \lambda_{kl}) \ar@{=>}[rr]_-{\lambda_{ij}\circ\mu_{jkl}} && \lambda_{ij}\circ \lambda_{jl} \ar@{=>}[rr]_-{\mu_{ijl}} && \lambda_{il}. }\]

By Theorem \ref{thm:2descent}, we have the following consequence:

\begin{cor}  Let $G$ be a finite group, let $A_G=(\rep{G},\otimes_G)$.  Let $\B\rightarrow \mathfrak S$ be a category fibred in abelian tensor categories over a site with final object $S_0$. Let $\mathcal U=\{u_i:T_i\rightarrow S_0\}$ be a covering.

Let $\alpha_{ij}\in\mathrm{Aut}(G)$ and $\beta_{ijk}\in G$ be chosen so that they satisfy the 2-cocycle conditions \eqref{eq:coc1} and \eqref{eq:coc2}.  Then \[(\uline{A_G}(\B_{/T_i}), \lambda_{ij}, \mu_{ijk})\] defined above is a 2-descent datum of stacks over $\mathfrak S$.

In particular there is a stack $\mathscr Y$ over $\mathfrak S$ satisfying $u_i^{-1}\mathscr Y\cong \uline{A_G}(\B_{/T_i})$ by Theorem \ref{thm:2descent}.  \end{cor}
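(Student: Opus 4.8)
The plan is to verify that the triple $(\uline{A_G}(\B_{/T_i}),\lambda_{ij},\mu_{ijk})$ meets the definition of a $2$-descent datum used in Theorem \ref{thm:2descent}, and then to quote that theorem to obtain the glued stack $\mathscr Y$. Most of the verification is formal bookkeeping about restrictions, transition equivalences, and transition $2$-isomorphisms; the single substantive point is the coherence identity asserted in the displayed diagram just above the statement, which I expect to be the main obstacle.

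First I would settle the restrictions. By the fibre-product compatibility of \ref{par:funB}, the restriction of $\uline{A_G}(\B_{/T_i})$ along $\mathfrak S_{/T_{ij}}\to\mathfrak S_{/T_i}$ is canonically $\uline{A_G}(\B_{/T_{ij}})$, and likewise over the triple and quadruple overlaps. Hence the restrictions to $T_{ij}$ of $\uline{A_G}(\B_{/T_i})$ and of $\uline{A_G}(\B_{/T_j})$ are the \emph{same} stack $\uline{A_G}(\B_{/T_{ij}})$, so that a transition datum over $T_{ij}$ is precisely an autoequivalence of $\uline{A_G}(\B_{/T_{ij}})$ over the base.

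Next I would produce the $1$- and $2$-morphisms from the data on $A_G$. Each $\lambda_{ij}=\kappa(\alpha_{ij})$ is a tensor autoequivalence of $A_G$ by Lemma \ref{lem:comparison}(i), invertible since $\alpha_{ij}\in\mathrm{Aut}(G)$; through the functoriality of $A\mapsto\uline{A}$ in the tensor-category argument (the construction $g\mapsto g^*$) it induces an autoequivalence of $\uline{A_G}(\B_{/T_{ij}})$, which serves as the transition $1$-morphism. For the $2$-morphisms, the first cocycle relation \eqref{eq:coc1} together with the anti-homomorphism property of $\kappa$ yields the equality $\lambda_{ij}\circ\lambda_{jk}=\kappa(\alpha_{ik}\circ\iota(\beta_{ijk}))$ of tensor autoequivalences, and Lemma \ref{lem:mu}(i) supplies the monoidal natural isomorphism $\mu_{ijk}\colon\kappa(\alpha_{ik}\circ\iota(\beta_{ijk}))\Rightarrow\kappa(\alpha_{ik})=\lambda_{ik}$, with $\mu_{ijk,\rho}=\rho(\alpha_{ik}(\beta_{ijk}))$. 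Since $g\mapsto g^*$ carries monoidal natural isomorphisms of tensor autoequivalences to $2$-isomorphisms of the induced stack autoequivalences, each $\mu_{ijk}$ becomes a $2$-isomorphism $\lambda_{ij}\circ\lambda_{jk}\Rightarrow\lambda_{ik}$ over $T_{ijk}$.

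The main obstacle is the coherence identity, namely commutativity of the displayed diagram over $T_{ijkl}$. Because $g\mapsto g^*$ is (contravariantly) $2$-functorial and the restrictions have been identified canonically, it suffices to check the identity for the natural isomorphisms of tensor autoequivalences of $A_G$ themselves. I would evaluate both pasting composites on an arbitrary $\rho\in A_G$, using the whiskering formulas of Lemma \ref{lem:mu}(ii),(iii) and the relation \eqref{eq:coc1} to rewrite the intermediate automorphisms. A direct computation then shows that the upper composite acts on $V_\rho$ as $\rho(\alpha_{il}(\beta_{ijk}\beta_{ikl}))$ while the lower composite acts as $\rho(\alpha_{il}(\alpha_{ij}^{-1}(\beta_{jkl})\beta_{ijl}))$; applying $\alpha_{il}^{-1}$ inside $\rho$, the two coincide exactly when $\alpha_{ij}^{-1}(\beta_{jkl})\beta_{ijl}=\beta_{ijk}\beta_{ikl}$, which is the second cocycle relation \eqref{eq:coc2}. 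With all axioms of a $2$-descent datum thereby confirmed, Theorem \ref{thm:2descent} produces a stack $\mathscr Y$ over $\mathfrak S$ with $u_i^{-1}\mathscr Y\cong\uline{A_G}(\B_{/T_i})$, as required.
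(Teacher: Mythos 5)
Your proposal is correct and follows essentially the same route as the paper: the transition equivalences and $2$-isomorphisms are induced from $\kappa(\alpha_{ij})$ and Lemma \ref{lem:mu}(i) via the cocycle condition \eqref{eq:coc1}, and the tetrahedron coherence is verified by evaluating both pasting composites on $\rho$ using Lemma \ref{lem:mu}(i)--(iii), reducing to \eqref{eq:coc2} exactly as in the paper's displayed calculation (upper composite $\rho\alpha_{il}(\beta_{ijk}\beta_{ikl})$, lower composite $\rho\alpha_{il}(\alpha_{ij}^{-1}(\beta_{jkl})\beta_{ijl})$). The only cosmetic difference is that you spell out the restriction identifications and the $2$-functoriality of $g\mapsto g^*$, which the paper leaves implicit via the appendix construction.
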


The proof of the commutativity of the diagram above is a direct calculation using Lemma \ref{lem:mu}(i)-(iii) and the first cocycle condition \eqref{eq:coc1}:

\begin{align*}
(\mu_{ikl}\circ (\mu_{ijk}\circ \lambda_{kl}))_\rho &= \mu_{ikl,\rho}\circ \mu_{ijk,\lambda_{kl}(\rho)} \\
&= \rho\alpha_{il}(\beta_{ikl})\circ \rho\alpha_{kl}\alpha_{ik}(\beta_{ijk})\\
&= \rho\alpha_{il}(\beta_{ikl})\circ \rho\alpha_{il}\iota(\beta_{ikl})(\beta_{ijk})\\
&= \rho\alpha_{il}(\beta_{ijk}\beta_{ikl}),
\end{align*}

\begin{align*}
(\mu_{ijl}\circ (\lambda_{ij}\circ \mu_{jkl}))_\rho &=\mu_{ijl,\rho}\circ \lambda_{ij}(\mu_{jkl,\rho})\\
&=\rho\alpha_{il}(\beta_{ijl})\circ\rho\alpha_{jl}\alpha_{ij}\alpha_{ij}^{-1}(\beta_{jkl})\\
&=\rho\alpha_{il}(\beta_{ijl})\circ\rho\alpha_{il}\iota(\beta_{ijl})\alpha_{ij}^{-1}(\beta_{jkl})\\
&=\rho\alpha_{il}(\alpha_{ij}^{-1}(\beta_{jkl})\beta_{ijl}).
\end{align*}

Therefore the commutativity follows from the second cocycle condition \eqref{eq:coc2}.

By applying $\chi$ to the calculation above, we see that the analogous diagram with $\lambda$ replaced with $\lambda'$  and $\mu$ replaced with $\mu'$ is also commutative.  Hence we have

\begin{cor}  Let $G$ be a finite group, let $A_Z=(\rep{G},\otimes_Z)$.  Let $\B\rightarrow \mathfrak S$ be a category fibred in abelian tensor categories over a site with final object $S_0$. Let $\mathcal U=\{u_i:T_i\rightarrow S_0\}$ be a covering.

Let $\alpha_{ij}\in\mathrm{Aut}(G)$ and $\beta_{ijk}\in G$ be chosen so that they satisfy the 2-cocycle conditions \eqref{eq:coc1} and \eqref{eq:coc2}.  Then \[(\uline{A_Z}(\B_{/T_i}), \lambda'_{ij}, \mu'_{ijk})\] defined above is a 2-descent datum of stacks over $\mathfrak S$.

In particular there is a a stack $\hat{\mathscr Y}$ over $\mathfrak S$ satisfying $u_i^{-1}\hat{\mathscr Y}\cong \uline{A_Z}(\B_{/T_i})$ by Theorem \ref{thm:2descent}.  \end{cor}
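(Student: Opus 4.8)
The plan is to mirror the proof of the preceding Corollary (the one for $A_G$), transporting every ingredient along the tensor equivalence $\chi$. By Theorem \ref{thm:2descent} it suffices to check that $(\uline{A_Z}(\B_{/T_i}),\lambda'_{ij},\mu'_{ijk})$ satisfies the axioms of a $2$-descent datum: the $\lambda'_{ij}$ must be tensor autoequivalences inducing morphisms of the stacks $\uline{A_Z}(\B_{/T_i})$, the $\mu'_{ijk}$ must be monoidal natural isomorphisms $\lambda'_{ij}\circ\lambda'_{jk}\Rightarrow\lambda'_{ik}$ inducing the corresponding $2$-cells, and these $2$-cells must satisfy the coherence (associativity) condition expressed by the coherence diagram displayed above with $\lambda,\mu$ replaced by $\lambda',\mu'$.

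First I would assemble the structural maps. By Lemma \ref{lem:comparison}(ii) each $\lambda'_{ij}=\kappa'(\alpha_{ij})$ is a genuine tensor autoequivalence of $A_Z$; hence, via the functor $g\mapsto g^*$ sending a tensor functor to a morphism of the associated categories fibred in groupoids, it induces an autoequivalence of $\uline{A_Z}(\B_{/T_i})$ serving as the transition equivalence of the datum over the double overlap $T_{ij}$. Next, the first cocycle relation \eqref{eq:coc1} together with Lemma \ref{lem:mu}(iv) produces the monoidal natural isomorphisms $\mu'_{ijk}$, with components $\mu'_{ijk,\rho}=\chi_\rho(\alpha_{ik}(\beta_{ijk}))$; applying $g\mapsto g^*$ on $2$-cells turns these into the required $2$-morphisms between the transition equivalences.

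The crux is the coherence condition. I would verify it exactly as in the $A_G$ case: since $\chi\colon A_Z\to(\modd{k^{\#\hat G}},\otimes)$ is an equivalence of abelian \emph{tensor} categories, it carries the already-established commutative coherence diagram for $(\lambda,\mu)$ on $A_G$ to a commutative diagram for $(\lambda',\mu')$; concretely, repeating the two displayed computations with the scalars $\mu'_\rho=\chi_\rho(\alpha(\beta))$ in place of the intertwiners $\mu_\rho=\rho\alpha(\beta)$ reduces both composites to $\chi_\rho\bigl(\alpha_{il}(\beta_{ijk}\beta_{ikl})\bigr)$ and $\chi_\rho\bigl(\alpha_{il}(\alpha_{ij}^{-1}(\beta_{jkl})\beta_{ijl})\bigr)$ respectively, so equality follows at once from the second cocycle condition \eqref{eq:coc2}. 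With the datum verified, Theorem \ref{thm:2descent} glues it to a stack $\hat{\mathscr Y}$ over $\mathfrak S$ with $u_i^{-1}\hat{\mathscr Y}\cong\uline{A_Z}(\B_{/T_i})$. The only genuine subtlety, and the step I would treat most carefully, is that $\chi$ must be used as a \emph{monoidal} equivalence, not merely an equivalence of abelian categories: it is precisely monoidality of $\chi$ that guarantees the transported $\mu'_{ijk}$ are \emph{monoidal} natural isomorphisms and that the coherence diagram survives the transport, which is exactly the point where $\otimes_Z$ differs from $\otimes_G$ and where a careless argument would break down.
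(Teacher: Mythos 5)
Your proposal is correct and follows essentially the paper's own route: the paper obtains the $A_Z$ statement precisely by applying the equivalence $\chi$ to the explicit computation already carried out for $(\lambda_{ij},\mu_{ijk})$ in the $A_G$ case, so that the two composites in the coherence diagram again agree by the second cocycle condition \eqref{eq:coc2}, and then invokes Theorem \ref{thm:2descent}. Your closing emphasis that the $\lambda'_{ij}$ and $\mu'_{ijk}$ must be monoidal with respect to $\otimes_Z$ is exactly the content supplied by Lemma \ref{lem:comparison}(ii) and Lemma \ref{lem:mu}(iv).
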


\subsubsection{}\label{}  Let $\mathfrak S$ be the \'etale site of affine schemes over a given scheme $S_0$ and let $\B\rightarrow \mathfrak S$ be the fibred category of quasi-coherent sheaves.

The stack $\mathscr Y$ constructed above is a $G$-gerbe over $\mathfrak S$.  Indeed, by the proof of Theorem \ref{thm:BG} we know that each stack $\uline{A_G}(\B_{/_{T_i}})$ is isomorphic to $BG\times T_i$.  The stack $\hat{\mathscr Y}$ should be considered as its \emph{dual space} as in \cite{TT}.

\subsubsection{}\label{}  \emph{Remarks.}  The $G$-gerbes arising from the 2-descent Theorem \ref{thm:2descent} considered here are a special kind.  To cover more general $G$-gerbes one needs to consider 2-cocycles with upper indices \[(\alpha_{ij}^{r},\beta_{ijk}^{rst})\] as in \cite[2.4 and 2.7]{breen_class2gerbe}, where $T^r_{ij}\rightarrow T_{ij}$ is a covering of $T_{ij}$ with index $r$, $\alpha_{ij}^r\in\mathrm{Aut}(G)$, and $\beta_{ijk}^{rst}\in G$.

The 2-cocycle conditions are

\begin{equation}\label{eq:coc1+}\alpha_{jk}^s\circ \alpha_{ij}^r=\alpha_{ik}^t\circ \iota(\beta_{ijk}^{rst}),\end{equation} and
\begin{equation}\label{eq:coc2+}(\alpha_{ij}^r){}^{-1}(\beta_{jkl}^{swv})\beta_{ijl}^{rvu}=\beta_{ijk}^{rst}\beta_{ikl}^{twu}. \end{equation}

We illustrate the construction in this more general setting by considering the $G$-gerbe \[BH\rightarrow BQ\] arising from a short exact sequence \[1\longrightarrow G\longrightarrow H\longrightarrow Q\longrightarrow 1\] of finite groups.

In this case consider the \'etale covering $\mathrm{pt}\rightarrow BQ$ by a single affine scheme $T_1:=\mathrm{pt}=\mathrm{Spec}(k)$.  Then we have, with notations as in \ref{par:coc},  \[T_{11}=\mathrm{pt}\times_{BQ}\mathrm{pt}\cong Q\] as a set of points; set $T_{ij}^r$ to be the point corresponding to an element $r$ in $Q$.  Therefore there is only one lower index, namely $1$, and the upper indices correspond to elements in $Q$.

For every $r\in Q$ choose a lifting $\tilde r\in H$.  This gives a set map \[Q\rightarrow \mathrm{Aut}(G)\] by sending $r$ to the conjugation automorphism of $G$ by $\tilde r$.  Denote this automorphism by $\alpha_{11}^r$.

The condition \eqref{eq:coc1+} defines $\beta_{111}^{rst}$ whenever the group elements $r,s,t\in Q$ satisfy the equality $t=rs$ and gives a $2$-cocycle; that is, condition \eqref{eq:coc2+} is satisfied.  

These choices of $\alpha_{11}^r$ and $\beta_{111}^{rst}$ allow us to glue $T_{11}\times BG$, that is, $\#Q$ copies of $BG$ together to get a $G$-gerbe $\mathscr Y$ over $BQ$.

Recall that each local copy of $BG$ is realized as $\uline{A_G}(\B_{/\mathrm{pt}})$, where $\mathrm{pt}$ is a point in $T_{11}$ and $\B\rightarrow \mathfrak S$ is the fibred category of quasi-coherent sheaves over the site of affine schemes over $\Lambda$.  Hence the dual $\hat{\mathscr Y}$ is glued from $\#Q$ copies of the scheme \[\hat G=\mbox{the set of isomorphism classes of irreducible representations of $G$}=\uline{A_Z}(\B_{/\mathrm{pt}})\] via the isomorphisms induced by $\alpha_{11}^r$, $r\in Q$.  In other words, $\hat {\mathscr Y}$ is the quotient of $\hat G$ by this $Q$-action 
\[\hat{\mathscr Y}=[\hat G/Q].\]

\subsubsection{}  Now we construct a twisted sheaf on $\hat{\mathscr Y}$ using \ref{par:gluing sheaves}.  See \cite{TT} for the role twisted sheaves played in gerbe duality.

First we consider $\hat{\mathscr Y}$, which is glued from $\uline{A_Z}(\B/_{T_i})=\hat G\times T_i$.  Consider the regular representation $\tilde \rho=\sum V_{\rho_s^*}\otimes \rho_s$, where $\{\rho_s\}$ is a set of representatives of isomorphism classes of irreducible $G$-representations.  Here $V_{\rho_s^*}\otimes \rho_s$ means the direct sum of $\dim V_{\rho_s^*}$ copies of $\rho_s$.  Denote by $\chi_s$ the character of $\rho_s$, then $\hat G\cong \{\chi_s\}$.

Let $\mathcal G_i$ be the sheaf on $\hat G\times T_i$ corresponding to $\tilde\rho\in A$ via the construction in \ref{par:sheaf}.  Then $\mathcal G_i$ is simply the sheaf on $\hat G\times T_i$ whose restriction to $\{\chi_s\}\times T_i$ is the trivial vector bundle $V_{\rho_s^*}\otimes \mathcal O_{T_i}$.

We will identify $\hat G\times T_i|_{ij}$ with $\hat G\times T_{ij}$, then we have an automorphism $\chi_{ij}$ on $\hat G\times T_{ij}$ induced by the automorphism $\alpha_{ij}\in\mathrm{Aut}(G)$.  By the cocycle condition (\ref{eq:coc1}) and the fact that an inner automorphism acts trivially on the set $\hat G$ of characters, we see that the natural isomorphism \[\phi_{ijk}: \chi_{jk}\circ \chi_{ij}\Rightarrow \chi_{ik}\] between stack isomorphisms is the identity:  In fact every $\chi_{ij}$ is the scheme automorphism on $\hat G\times T_{ij}$ given by the action of $\alpha_{ij}$ acting on $\hat G$.

The sheaf $\chi_{ij}^*(\mathcal G_j|_{ij})$ is the the sheaf whose restriction to $\{\chi_s\}\times T_{ij}$ is the trivial vector bundle $V_{\rho^*_v}\otimes \mathcal O_{T_{ij}}$, where $\rho_v$ is the representative of the isomorphism class of $\rho_s\circ \alpha_{ij}$.

The isomorphism $\rho^*_s\circ \alpha_{ij}\cong \rho^*_v$ is given by a vector space isomorphism (unique only up to a non-zero scalar) \[\tau_{ij,s}: V_{\rho_s^*}\stackrel{\cong}{\longrightarrow} V_{\rho_v^*}.\]  This defines sheaf isomorphisms \[\delta_{ij}: \mathcal G_i|_{ij}\stackrel{\cong}{\longrightarrow} \chi_{ij}^*(\mathcal G_j|_{ij})\] between sheaves on $\hat G\times T_{ij}$, and hence we have constructed a twisted sheaf $\mathcal G$ on $\hat{\mathscr Y}$ associated to the regular representation $\tilde \rho$.

To compute the twisting, which is a 2-cycle with values in $\mathcal O_{S_0}^\times$, first recall that by the cocycle condition (\ref{eq:coc1}) we have \[\alpha_{ki}\circ \alpha_{jk}\circ \alpha_{ij}=\iota(\beta_{ijk})\] in $\mathrm{Aut}(G)$, where $\beta_{ijk}\in G$, and for any $\beta\in G$, $\iota(\beta)$ denotes the inner automorphism $x\mapsto \beta^{-1}x\beta$ on $G$.  In particular, the isomorphism of vector spaces $\rho_s^*(\beta^{-1})$ gives an isomorphism from $\rho_s$ to $\rho_s\circ\iota(\beta)$.  Now consider the following commutative diagram:

\[\xymatrix{ \rho_s^* \ar[d]_-{\rho_s^*(\beta^{-1}_{ijk})} \ar@{-->}@/^15pt/[ddrrr]^-{c_{s,ijk}}\\ \rho_s^*\circ \iota(\beta_{ijk}) \ar@{=}[d] \\ \rho_s^*\circ \alpha_{ki}\circ \alpha_{jk}\circ \alpha_{ij} \ar[r]_-{\tau_{ki,s}} & \rho^*_t\circ \alpha_{jk}\circ \alpha_{ij} \ar[r]_-{\tau_{jk,t}} & \rho_u^*\circ\alpha_{ij} \ar[r]_-{\tau_{ij,u}} & \rho_s^*,  } \]  where $c_{s,ijk}$ is a non-zero scalar:  Indeed, we can show that the composition of these vector space isomorphisms gives an automorphism of the \emph{representation} $\rho^*_s$.  The sheaf $\mathcal G$ is $\{c_{s,ijk}\}$-twisted.


\subsubsection{}  An analogous construction as above gives a (non-twisted) sheaf on $\mathscr Y$, which is glued from $\uline{A_G}(\B_{/T_i})=BG\times_{S_0}T_i$.  Again consider the regular representation $\tilde\rho$, and let $\mathcal F_i$ be the sheaf on $BG\times_{S_0}T_i$ corresponding to $\tilde\rho$.

If $\mathcal M\rightarrow S$ is a $G$-torsor over a scheme $S$ over $T_i$, namely an object in $BG\times_{S_0}T_i$, then \[\mathcal F_i: (\mathcal M\rightarrow S)\mapsto \Gamma(S,\tilde \rho_*\mathcal M)\] where $\tilde \rho_*\mathcal M$ is the flat vector bundle on $S$ given by the 1-cocycle with values in $GL(V_{\tilde\rho})$ by pushing-forward the 1-cocycle with values in $G$ corresponding to $\mathcal M$.

We will identify $BG\times_{S_0}T_i|_{ij}$ with $BG\times_{S_0}T_{ij}$, and then we have an automorphism $\chi_{ij}$ on it induced by $\alpha_{ij}\in\mathrm{Aut}(G)$.  Since inner automorphisms on $G$ act trivially on $BG$, the natural isomorphism \[\phi_{ijk}:\chi_{jk}\circ \chi_{ij}\Rightarrow \chi_{ik}\] between stack isomorphisms is again the identity.

The sheaf $\chi_{ij}^*(\mathcal F_j|_{ij})$ is given by \[(\mathcal M\rightarrow S)\mapsto \Gamma(S,(\tilde\rho\circ\alpha_{ij})_*\mathcal M).\]

Notice that $V_{\tilde\rho}$ is isomorphic to $kG$ as a $k$-vector space, and so any group automorphism $\alpha\in\mathrm{Aut}(G)$ induces a \emph{linear} automorphism of $V_{\tilde\rho}$.  In particular, we then have a natural isomorphism \[\delta_{ij}: \mathcal F_i|_{ij}\stackrel{\cong}{\longrightarrow} \chi_{ij}^*(\mathcal F_j|_{ij}).\]

Finally, the vector bundles $\tilde\rho_*\mathcal M$ and $(\tilde\rho\circ \iota(\beta_{ijk}))_*\mathcal{M}$ are identified, as in the case of $\hat{\mathscr Y}$, by $\beta^{-1}_{ijk}$, which is exactly the inverse of the composition of $\delta_{ij}$'s over $T_{ijk}$.  Therefore the twisting is the identity in this case.  In other words the twisted sheaf $\mathcal F$ we obtained is actually a sheaf.


\appendix

\section{Twisting by a 2-cocycle}\label{sec:twisting}

\subsection{2-descent data}

\subsubsection{}  In this appendix we fix some notations and recall the $2$-descent of stacks.  Some of the materials below can be found in \cite{breen_class2gerbe}.

\subsubsection{}\label{}  Let $\mathfrak S$ be a site with final object $S_0$, then we have an equivalence $\mathfrak S\cong\mathfrak S_{/S_0}$.  Fix any covering $\mathcal U=\{u_i:T_i\rightarrow S_0\}$ of $S_0$, then any stack $\mathscr X$ over $\mathfrak S$ gives rise to a stack \[\mathscr X|_i:=u_i^{-1}\mathscr X\cong \mathscr X\times_{\mathfrak S}\mathfrak S_{/T_i}\] over $\mathfrak S_{/T_i}$ for every $i$, along with isomorphisms \[can_{ij}:p_1^{-1}u_i^{-1}\mathscr X\stackrel{\cong}{\longrightarrow} p_2^{-1}u_j^{-1}\mathscr X\] of stacks over $\mathfrak S_{/T_{ij}}$, where $T_{ij}=T_i\times_{S_0}T_j$, and $p_1$ and $p_2$ denote the two projections to $T_i$ and $T_j$ respectively.  To simplify notations, we will write $\mathscr X|_i|_{ij}$ for $p_1^{-1}u_i^{-1}\mathscr X$, etc; in particular we have \[can_{ij}:\mathscr X|_i|_{ij}\stackrel{\cong}{\longrightarrow} \mathscr X|_j|_{ij}.\]  We will similarly use the restriction notation for the pull-back functors.

The fibred category structure on $\mathscr X$, or more precisely the natural isomorphisms relating different pull-backs gives moreover natural isomorphisms \[\phi_{ijk}: (can_{jk}|_{ijk})\circ (can_{ij}|_{ijk})\Longrightarrow can_{ik}|_{ijk}\] between isomorphisms of stacks over $\mathfrak S_{/T_{ijk}}$, where $T_{ijk}=T_i\times_{S_0}T_j\times_{S_0}T_k$.

These natural transformations satisfy a compatibility cocycle condition (see below).

\subsubsection{}  The situation above formalizes to the notion of {\em $2$-descent data}:  A $2$-descent datum of stacks over $\mathfrak S$ with respect to the covering $\mathcal U$ is a triple $(\mathscr X_i,\chi_{ij}, \phi_{ijk})$ of stacks $\mathscr X_i$ over $\mathfrak S_{/T_i}$, stack isomorphisms \[\chi_{ij}:\mathscr X_i|_{ij}\stackrel{\cong}{\longrightarrow} \mathscr X_j|_{ij},\]and natural isomorphisms \[\phi_{ijk}: (\chi_{jk}|_{ijk})\circ (\chi_{ij}|_{ijk})\Longrightarrow \chi_{ik}|_{ijk}\] between functors from $\mathscr X_i|_{ijk}$ to $\mathscr X_k|_{ijk}$.  These are required to  satisfy the condition that for every $i,j,k,l$ the following diagram of functors from $\mathscr X_i|_{ijkl}$ to $\mathscr X_l|_{ijkl}$ is commutative:

\[\xymatrix{ \chi_{kl}\circ (\chi_{jk} \circ \chi_{ij})\ar@{=>}[rr]^-{\chi_{kl}( \phi_{ijk})} \ar@{=}[d]  && \chi_{kl}\circ \chi_{ik} \ar@{=>}[rr]^-{\phi_{ikl}} && \chi_{il} \ar@{=}[d]\\
(\chi_{kl}\circ \chi_{jk}) \circ \chi_{ij}\ar@{=>}[rr]_-{\phi_{jkl}\circ \chi_{ik}}  && \chi_{jl}\circ \chi_{ij} \ar@{=>}[rr]_-{\phi_{ijl}} && \chi_{il},
}\]  where for legibility we have omitted $|_{ijkl}$ throughout.

The following is part of \cite[Example~1.11~(i)]{breen_class2gerbe}.
\begin{thm}\label{thm:2descent}  Given a site $\mathfrak S$ with final object $S_0$ and a covering $\mathcal U=\{u_i: T_i\rightarrow S_0\}$, suppose we have a $2$-descent datum $(\mathscr X_i,\chi_{ij},\phi_{ijk})$ of stacks with respect to $\mathcal U$ as above.  Then there is a stack $\mathscr X$ over $\mathfrak S$ along with isomorphisms \[\mathscr X|_i=u_i^{-1}\mathscr X\cong \mathscr X_i\] of stacks over $\mathfrak S_{/T_i}$.  \end{thm}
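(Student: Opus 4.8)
The plan is to construct $\mathscr X$ explicitly as the stack of \emph{twisted descent data} for the covering $\mathcal U$, and then to exhibit the comparison equivalences $u_i^{-1}\mathscr X\cong\mathscr X_i$ by hand. Throughout I identify $\mathfrak S\cong\mathfrak S_{/S_0}$ using the final object, so that every $T\in\mathfrak S$ carries a canonical structure map to $S_0$; I write $T^T_i:=T\times_{S_0}T_i$, $T^T_{ij}:=T\times_{S_0}T_{ij}$, and $T^T_{ijk}:=T\times_{S_0}T_{ijk}$ for the induced objects over $T_i$, $T_{ij}$, $T_{ijk}$, and I suppress the canonical isomorphisms between the various iterated restrictions.

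First I would define the fibred category $p\colon\mathscr X\to\mathfrak S$. An object over $T$ is a pair $(\{x_i\},\{\theta_{ij}\})$ in which $x_i$ is an object of $\mathscr X_i$ over $T^T_i$ and $\theta_{ij}\colon\chi_{ij}(x_i)\stackrel{\cong}{\longrightarrow}x_j$ is an isomorphism in the fibre of $\mathscr X_j$ over $T^T_{ij}$, subject to the twisted cocycle condition over each $T^T_{ijk}$:
\[\theta_{ik}\circ(\phi_{ijk})_{x_i}=\theta_{jk}\circ\chi_{jk}(\theta_{ij}),\]
both sides being maps $\chi_{jk}\chi_{ij}(x_i)\to x_k$. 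A morphism over $f\colon T'\to T$ is a family of morphisms $x'_i\to f^*x_i$ in $\mathscr X_i$ commuting with the $\theta_{ij}$, and the pull-back $f^*(\{x_i\},\{\theta_{ij}\})$ is obtained by pulling each datum back along the maps induced by $f$. Checking that the pulled-back datum again satisfies the displayed condition, and that the resulting morphisms are cartesian, shows that $\mathscr X$ is a fibred category (in groupoids when the $\mathscr X_i$ are).

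Next I would verify that $\mathscr X$ is a stack. Since the displayed condition, the morphisms, and the pull-backs are all defined componentwise in the $\mathscr X_i$, descent for morphisms and effective descent for objects in $\mathscr X$ reduce to the corresponding statements in each stack $\mathscr X_i$: one glues each component $x_i$ in $\mathscr X_i$, glues the isomorphisms $\theta_{ij}$ using that the presheaf $\mathcal I som$ is a sheaf on $\mathscr X_j$, and the twisted cocycle condition for the glued datum follows from this sheaf property because it holds locally. Finally, to produce $u_i^{-1}\mathscr X\cong\mathscr X_i$ I would use \emph{transport}: over $\mathfrak S_{/T_i}$, one functor sends a descent datum to the appropriate restriction of its $i$-th component $x_i$; conversely an object $x$ of $\mathscr X_i$ is sent to the datum whose $j$-th component is $\chi_{ij}(x)$, with gluing isomorphisms assembled from the $\phi$'s. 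These are mutually quasi-inverse.

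The main obstacle is the bookkeeping in the $2$-categorical setting: the $\chi_{ij}$ are only equivalences, and the pull-back functors commute with one another and with the $\chi_{ij}$ only up to a web of canonical natural isomorphisms that must be inserted consistently. The crucial point---entering both the stability of the twisted cocycle condition under pull-back and the well-definedness of transport---is that the coherence (pentagon) square relating $\phi_{ijk}$, $\phi_{jkl}$, $\phi_{ikl}$ and $\phi_{ijl}$ is exactly the associativity needed for these assignments to be consistent over quadruple overlaps; once the variances are pinned down this is a direct diagram chase. As the statement is a special case of \cite[Example~1.11~(i)]{breen_class2gerbe}, I would either carry out this construction in full or simply invoke that reference.
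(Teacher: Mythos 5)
Your construction is sound, but note that the paper itself offers no proof of Theorem \ref{thm:2descent} at all: it is stated as a quotation of \cite[Example~1.11~(i)]{breen_class2gerbe} and the burden of proof is entirely deferred to that reference. What you supply is the standard explicit construction behind Breen's statement, namely the stack of \emph{twisted descent data}: your cocycle condition $\theta_{ik}\circ(\phi_{ijk})_{x_i}=\theta_{jk}\circ\chi_{jk}(\theta_{ij})$ is correctly typed (both sides go from $\chi_{jk}\chi_{ij}(x_i)$ to $x_k$, given the paper's conventions $\chi_{ij}\colon\mathscr X_i|_{ij}\to\mathscr X_j|_{ij}$ and $\phi_{ijk}\colon\chi_{jk}\circ\chi_{ij}\Rightarrow\chi_{ik}$), the reduction of the stack axioms for $\mathscr X$ to those of the $\mathscr X_i$ componentwise is the right mechanism, and your transport functors $x\mapsto(\chi_{ij}(x),(\phi_{ijk})_x)$ do yield the comparison $u_i^{-1}\mathscr X\cong\mathscr X_i$, with the quadruple-overlap coherence of the $\phi$'s (the tetrahedron condition --- ``pentagon'' is a slight misnomer here) being exactly what makes the transported datum satisfy your twisted cocycle condition. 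The only points you wave at that genuinely need care are (a) the restriction along the section $T\to T\times_{S_0}T_i$ when extracting the $i$-th component over an object $T$ of $\mathfrak S_{/T_i}$, and (b) the normalization of the diagonal data $\chi_{ii}$, $\theta_{ii}$, $\phi_{iij}$, etc.; both are routine. In short: the paper buys brevity by citation, while your route buys a self-contained argument at the cost of the 2-categorical bookkeeping you correctly identify as the main labor.
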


\subsubsection{}\label{par:gluing sheaves}  Now we explain how to construct twisted sheaves on 2-descended stacks.  Let $(\mathscr X_i,\chi_{ij},\phi_{ijk})$ be a 2-descent datum of stacks over $\mathfrak S$ with respect to a covering $\mathcal U=\{u_i:T_i\rightarrow S_0\}$ of the final object $S_0\in\mathfrak S$.

Let $\mathcal F_i$ be a sheaf on $\mathscr X_i$.  Suppose we are further given sheaf isomorphisms \[\delta_{ij}: \mathcal F_i|_{ij}\stackrel{\cong}{\longrightarrow} \chi_{ij}^*(\mathcal F_j|_{ij}),\]where $\chi_{ij}^*(\mathcal F_j|_{ij})=\mathcal F_i|_{ij}\circ \chi_{ij}$ as a (set-valued) functor.  Then, omitting $|_{ijk}$ throughout, we have isomorphisms of sheaves on $\mathscr X_i|_{ijk}$:

\[\xymatrix{\mathcal F_i \ar@/_3ex/[drrrrrr]_-{\eta^i_{ijk}} \ar[rr]^-{\delta_{ij}} && \chi_{ij}^*(\mathcal F_j)  \ar[rr]^-{\chi_{ij}^*(\delta_{jk})} && \chi^*_{ij}\chi^*_{jk}(\mathcal F_k) \ar[rr]^-{\chi_{ij}^*\chi_{jk}^*(\delta_{ki})} && \chi_{ij}^*\chi_{jk}^*\chi_{ki}^*(\mathcal F_i) \\ &&&&&& \mathcal F_i\ar[u]_-\cong, }\]  where the vertical isomorphism is induced from the natural isomorphism \[\phi_{ijk}:\chi_{jk}\circ \chi_{ij}\Rightarrow \chi_{ik}.\] (And the normalization $\chi_{ik}=\chi_{ki}^{-1}$.)

Here $\eta^i_{ijk}$ is an automorphism of the sheaf $\mathcal F_i|_{ijk}$ on $\mathscr X_i|_{ijk}$.  There is a natural compatibility relation between $\eta^i_{ijk}$ and $\eta^j_{ijk}$ in terms of the isomorphism $\delta_{ij}$.  If for every $i,j,k$ the automorphism $\eta^i_{ijk}$ is the identify automorphism, then the datum \[(\mathcal F_i,\delta_{ij})\] defines a sheaf on the stack $\mathscr X$.  In general, $\eta^i_{ijk}$ needs not be the identity, and we get a twisted sheaf on $\mathscr X$.

\subsection{2-cocycle on an abelian category}

\subsubsection{}\label{}  Now let $A$ be an abelian tensor category, and let $\pi:\B\rightarrow \mathfrak S$ be a category fibred in abelian tensor categories satisfying the conditions of Theorem \ref{thm:stack}.  Notice that for every $i$ the restriction \[\pi_i:\B|_i:=\B_{/T_i}\rightarrow \mathfrak S_{/T_i}\] also satisfies the conditions of Theorem \ref{thm:stack}, and therefore we have a stack $\uline{A}(\B|_i)$ over $\mathfrak S_{/T_i}$ for every $i$ which is isomorphic to $\uline{A}(\B)|_i=u_i^{-1}\uline{A}(\B)$.

\subsubsection{}\label{par:glueing}  With notations as above, we would like to glue the stacks $\uline A(\B|_i)$ using $2$-descent to produce new stacks over $\mathfrak S$.

To this end, let $\mathrm{Aut}^\otimes(A)$ denote the set of autoequivalences of $A$ which are tensor functors.  Choose $\lambda_{ij}\in\mathrm{Aut}^\otimes(A)$ for every $i,j$ and natural isomorphisms \[\mu_{ijk}: \lambda_{ij}\circ \lambda_{jk}\Longrightarrow \lambda_{ik},\]satisfying the ``tetrahedron'' condition that the following diagram of functors is commutative for every $i,j,k,l$:

\[\xymatrix{ (\lambda_{ij}\circ \lambda_{jk})\circ \lambda_{kl} \ar@{=>}[rr]^-{\mu_{ijk}\circ \lambda_{kl}} \ar@{=}[d]&& \lambda_{ik}\circ \lambda_{kl} \ar@{=>}[rr]^-{\mu_{ikl}} && \lambda_{il}\ar@{=}[d]  \\  \lambda_{ij}\circ (\lambda_{jk}\circ \lambda_{kl}) \ar@{=>}[rr]_-{\lambda_{ij}\circ\mu_{jkl}} && \lambda_{ij}\circ \lambda_{jl} \ar@{=>}[rr]_-{\mu_{ijl}} && \lambda_{il}. }\]

These conditions imply that the triple \[(\uline{A}(\B|_i), \lambda_{ij}^*,\mu_{ijk})\] is a $2$-descent datum of stacks over $\mathfrak S$ with respect to the covering $\mathcal U$.  Therefore by Theorem \ref{thm:2descent} there is a stack $\mathscr X$ over $\mathfrak S$ satisfying \[\mathscr X|_i=u_i^{-1}\mathscr X\cong \uline{A}(\B).\]


Here $\lambda^*_{ij}$ denotes the isomorphism \[\lambda^*_{ij}: \uline{A}(\B|_{i})|_{ij}\longrightarrow \uline{A}(\B|_{j})|_{ij}\] between stacks over $\mathfrak S_{/T_{ij}}$ defined by \[(f,(S,F)) \mapsto (f,(S,F\circ \lambda_{ij}))\] for every $f:S\rightarrow T_{ij}$; here $F:A\rightarrow \B|_{i,S}\cong\B_{S}$ is a tensor functor.

\subsubsection{}\label{}  \emph{Remark.}  It may be interesting to study the set of such $2$-cocycles for given $A$.  It should form some sort of cohomology set in degree $2$.


\end{document}